\DeclarePairedDelimiter{\norm}{\lVert}{\rVert}
\DeclarePairedDelimiter{\abs}{\lvert}{\rvert}
\newcommand{\rn}{\mathbb{R}^n}
\def\R{\mathbb R}
\def\e{\varepsilon}
\def\vphi{\varphi}
\def\l{\lambda}
\def\00{{\bf 0}}
\newcommand{\<}{\langle}
\renewcommand{\>}{\rangle}
\renewcommand{\a}{\alpha}
\renewcommand{\l}{\lambda}
\renewcommand{\L}{\Lambda}
\newtheorem*{theorem*}{Theorem}
\newtheorem{theorem}{Theorem}[section]
\newtheorem{lemma}[theorem]{Lemma}
\newtheorem{proposition}[theorem]{Proposition}
\newtheorem{example}[theorem]{Example}
\newtheorem{corollary}[theorem]{Corollary}
\newtheorem{remark}[theorem]{Remark}
\title[Regularity results for anisotropic equations]{Interior regularity results for inhomogeneous anisotropic quasilinear equations}
\author{Carlo Alberto Antonini}
\address{C. A. Antonini. Dipartimento di Matematica "Federigo Enriques",
Universit\`a degli Studi di Milano, Via Cesare Saldini 50, 20133 Milano, Italy}
\email{carlo.antonini@unimi.it}
\author{Giulio Ciraolo}
\address{G. Ciraolo. Dipartimento di Matematica "Federigo Enriques",
Universit\`a degli Studi di Milano, Via Cesare Saldini 50, 20133 Milano, Italy}
\email{giulio.ciraolo@unimi.it}
\author{Alberto Farina}
\address{LAMFA, UMR CNRS 7352, Universit\'e Picardie Jules Verne 33, rue St Leu, 
80039 Amiens, France}
\email{alberto.farina@u-picardie.fr}
\begin{document}

\begin{abstract} 
We consider inhomogeneous $p$-Laplace type equations of the form $-\mathrm{div}\left(a(\nabla u)\right)=f$ in a possibly anisotropic setting. Under general assumptions on the source term $f$, we obtain quantitative Sobolev regularity results for the stress field $a(\nabla u)$ and weighted $L^2$ estimates for the Hessian of the solution. As far as we know, our results are new or refine the ones available in literature also when restricted to the Euclidean setting.
\end{abstract}

\maketitle

\section {Introduction}

In this paper we study local regularity of solutions to inhomogeneous nonlinear PDE's driven by anisotropic $p$-Laplace type operators. More precisely, we are considering equations with a variational structure and of $p$-Laplacian type, possibly singular or degenerate. The word anisotropic means that the considered equation is of quasilinear type and that the gradient of the solution is measured in terms of a norm $H$, i.e.,  we are considering equations of the form 
\begin{equation}\label{eqn1}
-\mathrm{div}\left(a(\nabla u)\right)=f \,,
\end{equation}
where 
\begin{equation} \label{a_def1}
a(\nabla u) := \frac{1}{p} \nabla_\xi H^p (\nabla u) 
\end{equation}
and $H$ is a suitable norm.

We were led to discuss this topic while we were studying qualitative properties for quasilinear  partial differential equations of the form
\begin{equation*}
-\Delta_p u = F(u) \,, 
\end{equation*}
as well as for their natural generalization in an anisotropic setting, and we noticed that some regularity results needed in our analysis were missing. More precisely, we needed quantitative higher order integrability properties for the so-called stress field, i.e. the vector field given by \eqref{a_def1},
as well as for the Hessian of the solution $u$. Few of these results were available when $H$ is the standard Euclidean norm and, in the more general anisotropic setting, only when the source term $f$ is constant.

\medskip
Throughout this paper $\Omega$ is an open subset of $\R^n$ with $n \geq 2$ and, for $1< p < + \infty$,  we consider a local weak solution 
$u \in W^{1,p}_{loc}(\Omega)$ to 
\begin{equation}\label{eqn1}
-\mathrm{div}\left(a(\nabla u)\right)=f \,,
\end{equation}
where $f\in L^q_{loc}(\Omega) $, with
\begin{equation}\label{def:q}
q=
\begin{cases}
2\quad&\text{if }p\geq\frac{2n}{n+2}
\\
(p^*)'\quad&\text{if }1<p<\frac{2n}{n+2}.
\end{cases}
\end{equation}
Here \footnote{Given a function $u: \Omega \to \mathbb{R}$, we denote by $\nabla u (x)$ the gradient of $u$ evaluated at a point $x \in \Omega$. Given a function $\psi: \mathbb{R}^n \to \mathbb{R}$, the notation $\nabla_\xi \psi(Du)$ means that we are differentiating the function $\psi$ with respect to $\xi \in \mathbb{R}^n$ and evaluating it at $\nabla u$.}
\begin{equation} \label{a_def}
a(\nabla u) := \frac{1}{p} \nabla_\xi H^p (\nabla u)   \,,
\end{equation}
and we assume that the norm $H$ is of class $C^2(\R^n\setminus \{0\})$ and  such that
\begin{equation}\label{eqn:ellH}
\text{the unit ball } \{H(\xi)<1\} \textmd{ is uniformly convex};\footnote{i.e. such that the principal curvatures of its boundary are bounded away from zero.}
\end{equation}
see Section \ref{sect_notation} for further properties, equivalent definitions and some explicit examples. Equation \eqref{eqn1} has a variational structure since it is the Euler-Lagrange equation of the functional 
\begin{equation*} 
\mathcal{J}(v)=\frac{1}{p}\int_{\Omega}H^p(\nabla v) \,dx-\int_{\Omega} f v\,dx \,.
\end{equation*}
In particular, if $H$ is the standard Euclidean norm then the corresponding operator on the left-hand side in \eqref{eqn1} is the standard $p$-Laplace operator and, as we will discuss later, even in this special case some of our results are new or refine the existing ones.
%and our results apply. As we will discuss later,  \cambio{even} in this case some of our results are new or refine the existing ones.

Our first main result is a local regularity result regarding the stress field $a(\nabla u)$, more precisely we have 
%the following theorem.

\begin{theorem} \label{thm_main1}
Let $u\in W^{1,p}_{loc}(\Omega)$ be a local weak solution of \eqref{eqn1}, with $f \in L^q_{loc}(\Omega)$ and where $q$ and $H$ satisfy \eqref{def:q} and \eqref{eqn:ellH}, respectively. Then
\[
a(\nabla u)\in H^1_{loc}(\Omega)
\]
and there exists a constant $C$, depending only on $n,p$ and $H$, such that 
\begin{equation}\label{est:nablaanyp}
\norm{\nabla a(\nabla u)}_{L^2(B_{R/2})} \leq C \Bigl[ (R^{-\frac{n}{2}-1}) \norm{a(\nabla u)}_{L^1(B_{2R} \setminus B_R)} + \norm{f}_{L^2(B_{2R} )} \Bigr],
\end{equation}
\begin{equation}\label{est:anyp}
\norm{ a(\nabla u)}_{L^2(B_{R})} \leq C \Bigl[ R^{-\frac{n}{2}} \norm{a(\nabla u)}_{L^1(B_{2R} \setminus B_R)} + R \norm{f}_{L^2(B_{2R} )} \Bigr],
\end{equation} 
\begin{equation}\label{est:anypL1}
\norm{a(\nabla u)}_{L^1(B_{2R} \setminus B_R)} \leq C \norm{\nabla u}_{L^{p-1}(B_{2R}  \setminus B_R)}^{p-1} \,,
%C  R^\frac{n}{p} \norm{\nabla u}_{L^{p}(B_{2R}  \setminus B_R)}^{p-1}
\end{equation} 
for any open ball $B_{2R} \subset \subset \Omega$. 
\end{theorem}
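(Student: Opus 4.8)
The plan is to exploit the variational structure through a difference-quotient argument applied to a second-order test function, in the spirit of Nirenberg's method, but carried out on the stress field $a(\nabla u)$ rather than on $\nabla u$ directly. Concretely, for a small vector $h \in \R^n$ and a cutoff $\eta \in C_c^\infty(B_{2R})$, I would test the weak formulation of \eqref{eqn1} with the difference quotient $\tau_{-h}\big(\eta^2 \tau_h u\big)$, where $\tau_h v(x) := v(x+h) - v(x)$. Because $a = \frac1p \nabla_\xi H^p$ is the gradient of the convex, $p$-homogeneous integrand $\frac1p H^p$, the monotonicity/ellipticity inequality coming from \eqref{eqn:ellH} — namely $\langle a(\xi) - a(\zeta), \xi - \zeta\rangle \gtrsim (|\xi| + |\zeta|)^{p-2}|\xi - \zeta|^2$ and the dual bound $|a(\xi) - a(\zeta)| \lesssim (|\xi| + |\zeta|)^{p-2}|\xi - \zeta|$ — combine to give the crucial algebraic identity that $\langle a(\xi)-a(\zeta),\xi-\zeta\rangle$ is comparable to $|a(\xi)-a(\zeta)|^2$ up to the gradient-dependent weight, and more importantly (this is the trick that makes the $H^1$ statement clean) that $|\tau_h a(\nabla u)|^2 \lesssim \langle \tau_h a(\nabla u), \tau_h \nabla u\rangle$ after absorbing. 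This reduces the whole matter to estimating $\int \eta^2 \langle \tau_h a(\nabla u), \tau_h \nabla u\rangle\, dx$ uniformly in $h$.

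The second step is to produce that uniform bound from the tested equation. After the discrete integration by parts, one obtains
\[
\int \eta^2 \langle \tau_h a(\nabla u), \tau_h \nabla u\rangle \, dx \le C \int \eta |\nabla \eta|\, |\tau_h a(\nabla u)|\, |\tau_h u|\, dx + \int (\tau_h f)\, \eta^2 \tau_h u \, dx \,.
\]
The first term on the right is handled by Young's inequality, using the comparability $|\tau_h a(\nabla u)| \lesssim |\langle \tau_h a(\nabla u),\tau_h\nabla u\rangle|^{1/2}(\cdots)$ to absorb a small multiple of the left-hand side, leaving a term controlled by $R^{-2}\int_{B_{2R}\setminus B_R}|\tau_h u|^2$ which, after dividing by $|h|^2$ and letting $h\to 0$, becomes $R^{-2}\|\nabla u\|_{L^2}^2$ on the annulus; rewriting $\|\nabla u\|_{L^2}^2$ over an annulus in terms of $\|a(\nabla u)\|_{L^1}$ via the $p$-homogeneity $|a(\xi)| \approx H^{p-1}(\xi) \approx |\xi|^{p-1}$ yields the form appearing in \eqref{est:nablaanyp}. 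The source term is estimated by Cauchy–Schwarz together with the case distinction in \eqref{def:q}: when $p \ge \frac{2n}{n+2}$ we have $q=2$ and one simply pairs $\tau_h f$ with $\tau_h u$ and uses that $\tau_h u / |h|$ is bounded in $L^2_{loc}$ by $\|\nabla u\|_{L^2}$ on a slightly larger ball, which is in turn controlled by the right-hand side terms; when $1<p<\frac{2n}{n+2}$ one instead uses Sobolev embedding $W^{1,p}\hookrightarrow L^{p^*}$ to pair $f \in L^{(p^*)'}$ against $\eta^2\tau_h u$ and pass to the limit. Estimate \eqref{est:anyp} then follows from \eqref{est:nablaanyp} by a Poincaré/Sobolev inequality on $B_R$ (using that $a(\nabla u)$ has an interior-annulus $L^1$ bound to fix the constant of integration), and \eqref{est:anypL1} is immediate from $|a(\xi)| = H^{p-1}(\xi)|\nabla_\xi H(\xi)| \le C|\xi|^{p-1}$, a pointwise consequence of the $1$-homogeneity and $C^2$ regularity of $H$ away from the origin.

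A technical subtlety that must be dressed carefully: when $1<p<2$ the weight $(|\nabla u(x+h)| + |\nabla u(x)|)^{p-2}$ is singular where the gradient vanishes, so the inequality $|\tau_h a(\nabla u)|^2 \lesssim (|\nabla u(x+h)|+|\nabla u(x)|)^{p-2}\langle \tau_h a(\nabla u),\tau_h\nabla u\rangle$ cannot be used to absorb directly without first splitting on the region $\{|\nabla u| \text{ small}\}$ or, more robustly, by working with the auxiliary vector field $V_p(\nabla u) := H^{(p-2)/2}(\nabla u)\,\nabla u$ (or the Euclidean analogue), for which $|\tau_h V_p(\nabla u)|^2 \approx \langle \tau_h a(\nabla u),\tau_h \nabla u\rangle$ holds with \emph{no} weight, and then recovering $\tau_h a(\nabla u)$ from $\tau_h V_p(\nabla u)$ at the end. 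Passing from the difference-quotient bound to the genuine weak derivative of $a(\nabla u)$ is then the standard characterization of $H^1$. I expect the genuinely delicate point to be the bookkeeping in the degenerate/singular regimes — ensuring all the absorptions are legitimate and that the limit $h\to 0$ is justified (e.g. by Fatou on the difference quotients) — rather than any single hard inequality; the two ellipticity bounds and the $p$-homogeneity of $H$ do essentially all the analytic work, so the main obstacle is organizing the $p<2$ versus $p\ge 2$ cases and the two choices of $q$ into one clean argument.
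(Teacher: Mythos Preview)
Your difference-quotient scheme has a real gap at the step you flag as ``the trick that makes the $H^1$ statement clean''. Testing with $\tau_{-h}(\eta^2\tau_h u)$ produces a bound on $\int\eta^2\langle\tau_h a(\nabla u),\tau_h\nabla u\rangle$, which is equivalent (by the standard monotonicity inequalities you quote) to a bound on $\int\eta^2|\tau_h V_p(\nabla u)|^2$. This is the well-known statement $V_p(\nabla u)\in H^1_{loc}$, \emph{not} $a(\nabla u)\in H^1_{loc}$. The passage from one to the other is exactly the weight you try to absorb: from $|a(\xi)-a(\zeta)|\lesssim(|\xi|+|\zeta|)^{p-2}|\xi-\zeta|$ and $\langle a(\xi)-a(\zeta),\xi-\zeta\rangle\gtrsim(|\xi|+|\zeta|)^{p-2}|\xi-\zeta|^2$ one only gets
\[
|a(\xi)-a(\zeta)|^2\lesssim(|\xi|+|\zeta|)^{p-2}\,\langle a(\xi)-a(\zeta),\xi-\zeta\rangle,
\]
and the factor $(|\xi|+|\zeta|)^{p-2}$ cannot be removed. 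For $p>2$ it blows up as $|\xi|+|\zeta|\to\infty$ (so you would need $\nabla u\in L^\infty_{loc}$, which is unavailable when $f$ is merely in $L^2$); for $p<2$ it blows up at the critical set. Equivalently, $a=\Phi\circ V_p$ with $\Phi(w)=|w|^{(p-2)/p}w$, and $\Phi$ is not globally Lipschitz for any $p\ne2$, so $V_p(\nabla u)\in H^1$ does not yield $a(\nabla u)\in H^1$ by the chain rule. Your fallback to $V_p$ therefore does not recover the theorem.

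The paper avoids this by testing the \emph{differentiated} equation against the stress field itself: choosing $\varphi=\eta^2 a^k_\e(\nabla u_\e)$ in $\sum_j\int\partial_{x_k}a^j_\e(\nabla u_\e)\,\partial_{x_j}\varphi=-\int f_\e\,\partial_{x_k}\varphi$ and summing over $k$ produces $\int\eta^2\mathrm{tr}\big[(\nabla a_\e(\nabla u_\e))^2\big]$ as the main term, which directly controls $\|\nabla a_\e(\nabla u_\e)\|_{L^2}^2$ via the matrix inequality $\mathrm{tr}[(A_\e D^2u_\e)^2]\ge\lambda^2(\e^2+H^2)^{p-2}\|D^2u_\e\|^2$ and the reverse bound $\|\nabla a_\e\|\le\Lambda(\e^2+H^2)^{(p-2)/2}\|D^2u_\e\|$. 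This test function is only admissible because the regularised $u_\e$ are a priori in $H^2_{loc}\cap C^1$ (so $a_\e(\nabla u_\e)\in H^1_{loc}$); that is precisely why the approximation is needed and why a direct difference-quotient argument with test function built from $u$ rather than from $a(\nabla u)$ misses the target. A secondary issue: even if your scheme closed, the Caccioppoli remainder would be $R^{-2}\int_{B_{2R}\setminus B_R}|\nabla u|^p$, not $R^{-n-2}\|a(\nabla u)\|_{L^1}^2$; the paper obtains the latter form through a separate Sobolev--interpolation--iteration step (feeding the Caccioppoli inequality into a Sobolev inequality for $\eta\, a_\e(\nabla u_\e)$ and applying a hole-filling lemma), which your outline does not contain.
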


Motivated by applications to qualitative studies of PDEs (as discussed before), we also prove some regularity results regarding the Hessian of the solutions, provided that the source term $f$ enjoys better integrability properties.

\begin{theorem} \label{thm_D2u}
Assume $1<p\leq 2$ and let $u\in W^{1,p}_{loc}(\Omega)$ be a local weak solution of \eqref{eqn1} where $H$ satisfies \eqref{eqn:ellH} and $f \in L^r_{loc}(\Omega)$, $r>n$.
Then
\[
u\in H^2_{loc}(\Omega) \cap C^{1,\beta}_{loc}(\Omega)
\]
for some $\beta \in (0,1)$ depending only on $n,p, r$ and $H$. 

Moreover, for any open ball $B_{2R} \subset \subset \Omega$ we have 
\[
\int_{B_{R/2}}\norm{D^2 u}^2 dx\leq C \Bigl[ R^{-n-2} \norm{a(\nabla u)}_{L^1(B_{2R} \setminus B_R)}^2 + \norm{f}_{L^2(B_{2R} )}^2 \Bigr],
%\left(1+\norm{u}_{L^p(\Omega')}^{2(p-1)}+\norm{f}_{L^q(\Omega')}^2+\norm{f}^2_{L^2(\Omega')}\right).
\]
where  $C$ is a constant depending only on $p,n,H, r, B_R, B_{2R}, \norm{u}_{W^{1,p}(B_{2R})}, \norm{f}_{L^r(B_{2R})}.$
%, \norm{f}_{{\mathcal M}^{2, \lambda}(B_{2R})}$.

In particular, when $p=2$ we have 
\[
\int_{B_{R/2}}\norm{D^2 u}^2 dx\leq C \Bigl[ R^{-n-2} \norm{a(\nabla u)}_{L^1(B_{2R} \setminus B_R)}^2 + \norm{f}_{L^2(B_{2R} )}^2 \Bigr],
\]
where  $C$ is a constant depending only on $n,H$.
\end{theorem}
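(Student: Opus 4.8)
The plan is to deduce the Hessian bound from Theorem~\ref{thm_main1} by exploiting the structure of the stress field in the range $1<p\le 2$, where $a$ is only Hölder continuous at the origin but the map $\xi\mapsto a(\xi)$ is a $C^1$ diffeomorphism away from $0$ with controlled ellipticity. First I would record the pointwise algebraic relation between $\nabla a(\nabla u)$ and $D^2u$: since $a(\xi)=\tfrac1p\nabla_\xi H^p(\xi)$, the chain rule gives $\nabla(a(\nabla u))=D a(\nabla u)\,D^2u$ a.e., and the uniform convexity \eqref{eqn:ellH} together with $p$-homogeneity of $H^p$ yields the two-sided bound $c\,H(\nabla u)^{p-2}\,\norm{D^2u}\le \norm{\nabla(a(\nabla u))}\le C\,H(\nabla u)^{p-2}\,\norm{D^2u}$ wherever $\nabla u\ne 0$ (the exponent $p-2\le 0$ is the source of the difficulty in the singular range). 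Because of this, one cannot directly integrate $\norm{D^2u}^2$ against $\norm{\nabla(a(\nabla u))}^2$; the weight $H(\nabla u)^{2-p}$ can blow up where $\nabla u$ is small.

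The way around this is to use the $C^{1,\beta}_{loc}$ regularity of $u$, which I would establish first. Under the hypothesis $f\in L^r_{loc}$ with $r>n$, the classical regularity theory for anisotropic $p$-Laplace type equations with uniformly elliptic $H$ (Lieberman-type estimates, or the De Giorgi--Nash--Moser machinery combined with the results already quoted in the earlier sections) gives $\nabla u\in C^{0,\beta}_{loc}(\Omega)$ with $\beta$ and the local bound depending on $n,p,r,H$ and on $\norm{u}_{W^{1,p}(B_{2R})}$, $\norm{f}_{L^r(B_{2R})}$; in particular $\nabla u$ is locally bounded. Consequently $H(\nabla u)\le L$ on $B_R$ for a constant $L$ with the stated dependence, so that $H(\nabla u)^{2-p}\le L^{2-p}$ (here $2-p\ge 0$ is exactly what makes boundedness rather than lower bounds the relevant input). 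Combining with the pointwise estimate above gives
\[
\int_{B_{R/2}}\norm{D^2u}^2\,dx
\;\le\; C\,L^{2-p}\!\int_{B_{R/2}}H(\nabla u)^{2(p-2)}\norm{\nabla(a(\nabla u))}^2\,dx
\;\le\; C(L)\!\int_{B_{R/2}}\norm{\nabla(a(\nabla u))}^2\,dx,
\]
and at this point I would invoke estimate \eqref{est:nablaanyp} of Theorem~\ref{thm_main1} to bound the right-hand side by $C(L)\bigl[R^{-n-2}\norm{a(\nabla u)}_{L^1(B_{2R}\setminus B_R)}^2+\norm{f}_{L^2(B_{2R})}^2\bigr]$, absorbing the factor $R^{-n/2-1}$ into $R^{-n-2}$ after squaring. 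This yields both the membership $u\in H^2_{loc}$ and the displayed inequality, with $C$ depending on the listed quantities through $L$.

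For the special case $p=2$ the weight $H(\nabla u)^{p-2}$ is identically $1$, so the pointwise comparison $c\,\norm{D^2u}\le\norm{\nabla(a(\nabla u))}\le C\,\norm{D^2u}$ holds with $c,C$ depending only on $n$ and $H$ (via the ellipticity constants of $H^2$), no $C^{1,\beta}$ input is needed, and the estimate follows immediately from \eqref{est:nablaanyp} with a constant depending only on $n$ and $H$; I would also remark that in this linear-growth case $C^{1,\beta}_{loc}$ regularity is standard. The main obstacle I anticipate is making the first step fully rigorous when $\nabla u$ vanishes on a set of positive measure: there $D^2u=0$ a.e. on $\{\nabla u=0\}$ (by the a.e.\ differentiability structure of Sobolev functions), so that set contributes nothing, but one must argue carefully that the chain-rule identity and the two-sided bound are valid a.e.\ on $\{\nabla u\ne 0\}$ and that no contribution is lost on the boundary of this set — this is where I would be most careful, approximating $a$ near the origin and passing to the limit, or invoking the second-differentiability results for $a(\nabla u)$ already contained in Theorem~\ref{thm_main1}.
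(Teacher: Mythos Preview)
Your approach is genuinely different from the paper's. The paper never works with $u$ directly: it stays with the regularized solutions $u_\e$ (which are already in $H^2_{loc}$ by Lemma~\ref{lemma_reg_u_ep}), obtains a uniform $C^{1,\beta}$ bound for $u_\e$ via Lemma~\ref{campa} and Lieberman's theorem, uses this bound to control the weight $[\e^2+H^2(\nabla u_\e)]^{p-2}$ from below in inequality \eqref{eqn:ut4-pippo}, combines with \eqref{eqn:ut1-bis}, and only then passes to the limit $\e\to 0$. Your idea---use Theorem~\ref{thm_main1} as a black box, then invert the relation $\nabla(a(\nabla u))=Da(\nabla u)\,D^2u$ pointwise using the $C^{1,\beta}$ bound on $\nabla u$---is more direct in spirit, and the final chain of inequalities is correct.

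There is, however, a genuine gap as written. Your chain-rule identity and the bound $\norm{D^2u}\le c^{-1}H(\nabla u)^{2-p}\norm{\nabla(a(\nabla u))}$ both presuppose that $D^2u$ exists as an $L^2_{loc}$ object, which is exactly the conclusion you are after; the remedies you sketch (noting $D^2u=0$ a.e.\ on $\{\nabla u=0\}$, or ``approximating $a$ near the origin'') do not break this circularity, and the intermediate expression $\int H(\nabla u)^{2(p-2)}\norm{\nabla(a(\nabla u))}^2$ in your display does not follow from what precedes it. The clean fix that makes your route work is the observation, specific to $1<p\le 2$, that the inverse map $a^{-1}$ is $C^1$ on all of $\R^n$ (equivalently, locally Lipschitz): from \eqref{ell:fin} one has $\norm{(Da(\xi))^{-1}}\le C\abs{\xi}^{2-p}$, which is bounded on bounded sets and vanishes at the origin when $p\le 2$. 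Writing $\nabla u=a^{-1}\bigl(a(\nabla u)\bigr)$ and applying the chain rule for locally Lipschitz outer functions (Marcus--Mizel, as in the paper's reference \cite{mm}) to $a(\nabla u)\in H^1_{loc}\cap L^\infty_{loc}$ then gives $\nabla u\in H^1_{loc}$ together with $\norm{D^2u}\le C\,L^{2-p}\norm{\nabla(a(\nabla u))}$ a.e., after which your use of \eqref{est:nablaanyp} is legitimate. With this ingredient added, your argument is complete; what it buys over the paper's is a self-contained deduction from Theorem~\ref{thm_main1} without returning to the approximating sequence, while the paper's approximation route avoids having to argue separately that $u\in H^2_{loc}$.
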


\begin{remark} \label{rem-thm_D2u}
Theorem \ref{thm_D2u} is a special case of a more general result involving a source term $f$ satisfying some weaker integrability conditions. See Theorem  \ref{thm_D2u-generale} and Remark \ref{caso-particolare} in Section \ref{sect_proofs}.
\end{remark}

For a general $p>1$ we have the following weighted integral estimate for the Hessian of the solution $u$. 

\begin{theorem} \label{thm_main2}
Let $u\in W^{1,p}_{loc}(\Omega)$ be a local solution of \eqref{eqn1}, where $H$ satisfies \eqref{eqn:ellH} and $f\in L^r_{loc}(\Omega)$ with $r>n$. Then
\[
u\in H^2_{loc}(\Omega \setminus Z) \cap C^{1,\beta}_{loc}(\Omega)
\]
where $Z$ denotes the set of critical points of $u$ and $\beta \in (0,1)$ depends only on $n,p,r$ and $H$.

\smallskip

Moreover, for any open ball $B_{2R} \subset \subset \Omega$ we have 
\begin{equation}\label{int:est}
\int_{B_{R/2} \setminus Z}\left[H^2(\nabla u)\right]^{p-2}\norm{D^2 u}^2 dx\leq C,
\end{equation}
where $C$ is a constant depending only on $p,n,H, r, B_R, B_{2R}, \norm{u}_{W^{1,p}(B_{2R})}, \norm{f}_{L^r(B_{2R})}.$
 %$C_2=C_2(n,p,H,\norm{f}_{L^q(\Omega)},\norm{u}_{W^{1,p}(\Omega')},\Omega',
%\Omega'')$.
\end{theorem}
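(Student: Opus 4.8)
The plan is to derive Theorem~\ref{thm_main2} from Theorem~\ref{thm_main1}, combined with first-order gradient regularity and an elementary pointwise comparison between $\nabla a(\nabla u)$ and the weighted Hessian. First I would record that, since $f\in L^r_{loc}(\Omega)$ with $r>n$, the regularity theory for quasilinear equations of $p$-Laplacian type with right-hand side in $L^r$, $r>n$, gives $u\in C^{1,\beta}_{loc}(\Omega)$ for some $\beta=\beta(n,p,r,H)\in(0,1)$; in the anisotropic case this rests on the uniform convexity \eqref{eqn:ellH}. In particular $\nabla u$ is continuous, so $Z=\{\nabla u=0\}$ is relatively closed in $\Omega$, the set $\Omega\setminus Z$ is open, and both the assertion ``$u\in H^2_{loc}(\Omega\setminus Z)$'' and the integral in \eqref{int:est} are meaningful.

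Next, since $r>n\geq 2$ we have $f\in L^2_{loc}(\Omega)\subseteq L^q_{loc}(\Omega)$ with $q$ as in \eqref{def:q} (when $1<p<\tfrac{2n}{n+2}$ one checks $(p^*)'<n<r$), so Theorem~\ref{thm_main1} applies and yields $a(\nabla u)\in H^1_{loc}(\Omega)$ together with the estimates \eqref{est:nablaanyp} and \eqref{est:anypL1}. To upgrade this to $u\in H^2_{loc}(\Omega\setminus Z)$, I would use that $Da(\xi)=\tfrac1p D^2_\xi H^p(\xi)$ is symmetric and positive definite for every $\xi\neq 0$ (again by \eqref{eqn:ellH}), so $a$ is a local $C^1$ diffeomorphism on $\R^n\setminus\{0\}$ with $C^1$ local inverse; since on compact subsets of $\Omega\setminus Z$ the continuous field $\nabla u$ takes values in a compact subset of $\R^n\setminus\{0\}$, we may write $\nabla u=a^{-1}\bigl(a(\nabla u)\bigr)$ there and conclude $\nabla u\in H^1_{loc}(\Omega\setminus Z)$ by the chain rule, i.e.\ $u\in H^2_{loc}(\Omega\setminus Z)$. (Equivalently, on such subsets the equation is uniformly elliptic and one invokes the classical difference-quotient $W^{2,2}$ estimate.)

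Finally I would establish \eqref{int:est} via a pointwise bound. A.e.\ on $\Omega\setminus Z$ the chain rule gives $\nabla_x a(\nabla u)=\tfrac1p\,D^2_\xi H^p(\nabla u)\,D^2 u$. By the $p$-homogeneity of $H^p$ and the uniform convexity \eqref{eqn:ellH} one has the ellipticity bound $D^2_\xi H^p(\xi)\geq c_0\,[H(\xi)]^{p-2}\,\Id$ for $\xi\neq 0$, with $c_0=c_0(n,p,H)>0$ (a property collected in Section~\ref{sect_notation}); since $A:=D^2_\xi H^p(\nabla u)$ is symmetric positive definite, $\|AM\|\geq\lambda_{\min}(A)\,\|M\|$ for every matrix $M$, so taking $M=D^2 u$ and using $[H(\nabla u)]^{2(p-2)}=[H^2(\nabla u)]^{p-2}$ we get
\[
[H^2(\nabla u)]^{p-2}\,\|D^2 u\|^2\ \leq\ \Bigl(\tfrac{p}{c_0}\Bigr)^{\!2}\,\|\nabla_x a(\nabla u)\|^2\qquad\text{a.e. on }\ \Omega\setminus Z.
\]
Integrating over $B_{R/2}\setminus Z$, applying \eqref{est:nablaanyp}, then bounding $\|a(\nabla u)\|_{L^1(B_{2R}\setminus B_R)}$ through \eqref{est:anypL1} and H\"older's inequality by a constant times $\|\nabla u\|_{L^p(B_{2R})}^{p-1}$, and $\|f\|_{L^2(B_{2R})}$ by a constant times $\|f\|_{L^r(B_{2R})}$, gives \eqref{int:est} with a constant of the stated form. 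The only genuinely substantial ingredient is Theorem~\ref{thm_main1} itself; beyond that, the one delicate point is the $C^{1,\beta}$ gradient regularity with merely $L^r$, $r>n$, data in the anisotropic setting, while everything else reduces to the homogeneity and uniform-convexity algebra for $D^2 H^p$ together with a chain rule.
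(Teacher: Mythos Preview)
Your argument is correct and follows a genuinely different, more direct route than the paper's. The paper proves the result by returning to the approximating solutions $u_\e$: it obtains uniform $C^{1,\beta}$ bounds for $u_\e$ (via a Campanato--Morrey lemma and Lieberman's theory), uses the uniform estimate \eqref{eqn:ut4-pippo} to bound $\int [\e^2+H^2(\nabla u_\e)]^{p-2}\|D^2 u_\e\|^2$, shows that the functions $(\e^2+|\nabla u_\e|^2)^{(p-2)/2}\partial_{ij}u_\e$ are bounded in $L^2_{loc}$, and then identifies the weak limit on $\Omega\setminus Z$ using $u_\e\to u$ in $C^1_{loc}$ together with a local $H^2$ bound away from $Z$. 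By contrast, you take Theorem~\ref{thm_main1} as a black box, invert $a$ locally (legitimate since $D_\xi a=D^2_\xi(H^p/p)$ is positive definite off the origin) to deduce $u\in H^2_{loc}(\Omega\setminus Z)$, and then observe that the chain rule plus the ellipticity lower bound in \eqref{eqn:ellHp} give the pointwise inequality $[H^2(\nabla u)]^{p-2}\|D^2u\|^2\le C\|\nabla a(\nabla u)\|^2$ a.e.\ on $\Omega\setminus Z$, from which \eqref{int:est} follows immediately via \eqref{est:nablaanyp}--\eqref{est:anypL1}.

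What each approach buys: your route is shorter and shows that, once Theorem~\ref{thm_main1} is available, the weighted Hessian bound is a purely algebraic consequence of the structure of $a$; in fact your constant depends only on $n,p,H,R,\|\nabla u\|_{L^p(B_{2R})},\|f\|_{L^2(B_{2R})}$, which is slightly sharper than the stated dependence. The paper's route, on the other hand, is self-contained regarding the $C^{1,\beta}$ regularity (it derives it from the Campanato--Morrey lemma rather than citing it), and its limiting argument for $\phi^{i,j}_\e$ is set up so as to also handle the more general Morrey-type hypothesis \eqref{ipotesi-f-campa} on $f$ that appears in Theorem~\ref{thm_main2-generale}. The one point you flag as delicate---$C^{1,\beta}$ with $L^r$, $r>n$, data in the anisotropic setting---is exactly what the paper supplies via Lemma~\ref{campa} and \cite{lieb2}, so you may simply quote that.
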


\begin{remark}\label{rem-thm_main2}
	Theorem \ref{thm_main2} is a special case of a more general result involving a source term $f$ satisfying some weaker integrability conditions. See Theorem  \ref{thm_main2-generale} and Remark \ref{caso-particolare} in Section \ref{sect_proofs}.
\end{remark}

%Let 
%$$
%Z=\{x\in\Omega\,:\nabla u=0\}
%$$
%be the set of critical points of $u$. Then, for any $\Omega''\subset\subset\Omega'$ compactly contained in $\Omega$, we have
%\begin{equation}\label{int:est}
%\int_{\Omega''\setminus Z}\left[H^2(\nabla u)\right]^{p-2}\norm{D^2 u}^2 dx\leq %C_2,
%\end{equation}
%for some constant %$C_2=C_2(n,p,H,\norm{f}_{L^q(\Omega)},\norm{u}_{W^{1,p}(\Omega')},\Omega',\Omega'')$.

Let us now briefly overview the results related to ours and which are available in the existing literature. 
A first result concerning the local Sobolev regularity of the stress field was proved in \cite{lou} for the special case of the classical $p$-Laplacian operator.\footnote{Actually in \cite{lou} the author proves only that $ \vert \nabla u \vert^{p-1} \in H^1_{loc}(\Omega, \R)$ and not that $ \vert \nabla u  \vert^{p-2} \nabla u  \in H^1_{loc}(\Omega, \R^N)$.} Also, the results in  \cite{lou} are obtained under stronger (than ours) integrability assumptions on the source term\footnote{See the discussion after formula \eqref{def-q}.}  and the quantitative estimates are not obtained. 

In \cite{CianchiMazya} an equation driven by a rotationally-invariant operator is considered, i.e. an equation having the special form 
\begin{equation}\label{cm1}
-\mathrm{div}\left (\mathfrak a(\vert \nabla u \vert) \nabla u \right)=f
\end{equation}
where $\vert \cdot \vert$ denotes the Euclidean norm. 
	Under the so-called Uhlenbeck structure conditions, the authors of \cite{CianchiMazya} prove local\footnote{In \cite{CianchiMazya} the authors, under suitable regularity assumptions on the bounded domain 
	$\Omega $, also prove global (i.e., up to the boundary) $H^1$-regularity for the stress field $\mathfrak a(\vert \nabla u \vert) \nabla u$ when $u$ is a solution to either the homogeneous Dirichlet or the homogeneous Neumann problem.} $H^1$-regularity for  the stress field
	$\mathfrak a(\vert \nabla u \vert) \nabla u$ together with a quantitative estimate. 
	Their approach is different from ours. They make use of an intermediate inequality for the square of the differential operator $ -\mathrm{div}\left (\mathfrak a(\vert \nabla u \vert) \nabla u \right)$. However this differential inequality seems to depend crucially on the fact that the left-hand-side of \eqref{cm1} is rotationally invariant and therefore its anisotropic counterpart does not seem to be obvious. 
	
Fractional-Sobolev regularity for the stress fields has also been investigated. 
In particular, the authors of \cite{akm} prove that the stress field $a(\nabla u)$ belongs to $W^{\sigma,1}_{loc}$ for any $\sigma \in (0,1)$, whenever $f$ is locally integrable (or even a suitable measure). 
	
While writing this paper, we became aware of [11, Theorem 1.2] where the authors obtain some quantitative estimates (in $H^1_{loc}$) on the stress field $a(\nabla u)$ for a larger class of operators than ours. 
%\todo{When restricted to $p$-Laplace type operators, their quantitative estimates involve a power} \todo{of the $L^1$ norm of the stress field which coincides with ours only for the case $p=2$.} 
Their approach completely differs from ours, it provides some estimates in a slightly different form and it seems that it does not lead to integral estimates for the Hessian of the solution $u.$

Second-order Sobolev regularity for solutions to the inhomogeneous $p$-Laplace equation has also been the object of research. For $p \in (1,2]$ and $ f \in L^{p'}$, the regularity $u \in W^{2,p} $ has been obtained in \cite{DeTh} and \cite{Sim}.
To the best of our knowledge, when $p>2$, the known results are available only under a Sobolev-type regularity for the source term $f$. Indeed, the author of \cite{Cel} proves that $u \in H^2_{loc}$ if $ f \in H^1_{loc}$, when $p \in (2,3)$, while in \cite{MRS} the regularity $u \in W^{2,m}_{loc} $ if $ f \in W^{1,m}_{loc}$, $m>n$, is obtained when $p$ is suitably close to $2$. We also refer to \cite{DS} for an earlier contribution under stronger regularity assumptions on both $u$ and $f$. Finally we mention that, for $p>2$ and $ f \in L^r$,  fractional-Sobolev regularity results for the gradient of solutions to nonlinear equations of $p$-Laplacian type can be found in \cite{Sim}, \cite{Ming}, \cite{akm} (see also the references therein).

Apart from its own interest in regularity theory, Theorem \ref{thm_main1} may be helpful in many situations arising in PDEs theory and we actually arrived to study this problem while we were working on another project on qualitative properties of solutions to elliptic PDEs which will appear in a forthcoming paper. However, in this paper we also prove two interesting consequences of Theorem \ref{thm_main1}.

The first application is related to the measure of critical points and it was firstly proved in \cite{lou} in the Euclidean case and under more restrictive assumptions on $f$ (see also \cite{ciraolino}).

\begin{proposition}\label{prop:1}
Let $u\in W^{1,p}(\Omega)$ be a weak solution of \eqref{eqn1} and assume that the assumptions of Theorem \ref{thm_main1} are fulfilled. 
Then
\[
f(x)=0\quad\text{a.e. } x\in\{\nabla u=0\}.
\]
\end{proposition}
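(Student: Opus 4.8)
The plan is to deduce the statement from the $H^1_{loc}$-regularity of the stress field established in Theorem~\ref{thm_main1}, together with two standard facts: that equation \eqref{eqn1} holds in the strong (a.e.) sense once $a(\nabla u)\in H^1_{loc}$, and that the weak gradient of a Sobolev function vanishes almost everywhere on each of its level sets.

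First I would record that $a(0)=0$: since $H$ is a norm and $p>1$, the function $\xi\mapsto H^p(\xi)$ is differentiable at the origin with $\nabla_\xi H^p(0)=0$ (because $H^p(\xi)\le C|\xi|^p$), hence $a(0)=\tfrac1p\nabla_\xi H^p(0)=0$. Consequently
\[
\{\nabla u=0\}\ \subseteq\ N:=\bigl\{x\in\Omega:\ a(\nabla u)(x)=0\bigr\},
\]
so it is enough to show that $f=0$ a.e.\ on $N$.

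Next, write $w:=a(\nabla u)=(w_1,\dots,w_n)$, which belongs to $H^1_{loc}(\Omega;\mathbb{R}^n)$ by Theorem~\ref{thm_main1}. For each scalar component $w_i\in H^1_{loc}(\Omega)$, the decomposition $w_i=(w_i)_+-(w_i)_-$ and the chain rule $\nabla(w_i)_\pm=\pm\chi_{\{\pm w_i>0\}}\nabla w_i$ a.e.\ give $\nabla w_i=\chi_{\{w_i\neq 0\}}\nabla w_i$ a.e., i.e.\ $\nabla w_i=0$ a.e.\ on $\{w_i=0\}$. Intersecting over $i=1,\dots,n$, the whole Jacobian $\nabla w$ vanishes a.e.\ on $N=\bigcap_{i=1}^n\{w_i=0\}$; in particular $\mathrm{div}\,w=\sum_{i=1}^n\partial_i w_i=0$ a.e.\ on $N$.

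Finally, since $w\in H^1_{loc}(\Omega;\mathbb{R}^n)$, its distributional divergence is represented by the $L^2_{loc}$ function $\sum_i\partial_i w_i$; integrating by parts in the weak formulation of \eqref{eqn1} tested against $\varphi\in C^\infty_c(\Omega)$ then gives $-\mathrm{div}\,w=f$ a.e.\ in $\Omega$. Combining this with the previous step, $f=-\mathrm{div}\,w=0$ a.e.\ on $N$, hence a fortiori on $\{\nabla u=0\}$, which is the assertion. There is no real obstacle beyond Theorem~\ref{thm_main1} itself: the only two points deserving a line of justification are the passage from the weak to the a.e.\ form of the equation — legitimate precisely because $w$ now possesses a weak divergence in $L^2_{loc}$ — and the classical lemma on the vanishing of the gradient on level sets of Sobolev functions.
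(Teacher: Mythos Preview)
Your argument is correct and reaches the conclusion by a route genuinely different from the paper's. The paper stays entirely within the weak formulation: it tests \eqref{eqn1} with $\dfrac{|a(\nabla u)|}{\e+|a(\nabla u)|}\,\varphi$, which lies in $H^1_{loc}$ by Stampacchia's chain rule once $a(\nabla u)\in H^1_{loc}$, and then lets $\e\to 0^+$ by dominated convergence to obtain $\int_{\Omega\setminus\{\nabla u=0\}}\varphi f=\int_\Omega \varphi f$ for every $\varphi\in C^\infty_c(\Omega)$. You instead pass to the strong form $-\mathrm{div}\,w=f$ a.e.\ (legitimate because $w\in H^1_{loc}$) and invoke directly the level-set lemma $\nabla w_i=0$ a.e.\ on $\{w_i=0\}$, which is precisely the Stampacchia fact the paper uses in disguise. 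Your version is arguably more transparent and requires no limiting procedure; the paper's version has the minor advantage of never leaving the weak formulation, so one does not have to spell out the integration-by-parts step that upgrades the equation to an a.e.\ identity. Either way the substance is the same: both arguments hinge on the $H^1_{loc}$ regularity of the stress field from Theorem~\ref{thm_main1} combined with the vanishing of weak derivatives on level sets.
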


An immediate consequence of Proposition \ref{prop:1} is the following corollary.

\begin{corollary} \label{corollary_final}
Under the assumptions of Proposition \ref{prop:1}, if $f(x)\neq 0$ for almost all $x\in \Omega$, then the Lebesgue measure of the singular set $\{\nabla u=0\}$ is zero. 

In particular, for any $C\in\R$, the level set $\{ u=C\}$ has zero measure.
\end{corollary}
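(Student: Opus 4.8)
The plan is to derive the Corollary directly from Proposition~\ref{prop:1} together with the locality of the weak gradient; no new analytic input is required beyond what is already contained in Theorem~\ref{thm_main1} and Proposition~\ref{prop:1}.

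First I would handle the claim about the critical set $Z:=\{\nabla u=0\}$. By Proposition~\ref{prop:1}, $f=0$ a.e.\ on $Z$. If the Lebesgue measure of $Z$ were strictly positive, then $f$ would vanish on a set of positive measure, contradicting the hypothesis that $f(x)\neq 0$ for a.e.\ $x\in\Omega$. Hence $|Z|=0$.

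For the ``in particular'' statement I would invoke the classical fact that a function $u\in W^{1,1}_{loc}(\Omega)$ (a fortiori $u\in W^{1,p}_{loc}(\Omega)$) satisfies $\nabla u=0$ a.e.\ on each level set $\{u=C\}$, $C\in\R$ --- equivalently, $\nabla u=0$ a.e.\ on $u^{-1}(N)$ for every Lebesgue-null set $N\subset\R$ --- which is proved by applying the chain rule to truncations of $u$ and can be found in standard references. Consequently, up to a set of measure zero one has $\{u=C\}\subseteq Z$, and the previous paragraph yields $|\{u=C\}|=0$.

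The argument is very short and presents no genuine obstacle: the only point deserving mild care is the precise statement (and the attendant measurability issues) of the level-set lemma for Sobolev functions, but this is entirely standard, so all the substance of the result is inherited from Theorem~\ref{thm_main1} and Proposition~\ref{prop:1}.
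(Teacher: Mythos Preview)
Your proof is correct and follows exactly the paper's approach for the first claim (from Proposition~\ref{prop:1}, $Z\subset\{f=0\}$ up to a null set, hence $|Z|=0$). You are in fact more thorough than the paper, which does not spell out the level-set argument at all; your use of the standard Sobolev fact that $\nabla u=0$ a.e.\ on $\{u=C\}$ is the natural way to justify the ``in particular'' clause.
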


\medskip

The paper is organized as follows. In Section \ref{sect_notation} we introduce some notation, clarify the setting in which we are working and provide some examples of norms satisfying \eqref{eqn:ellH}. In Section \ref{sect_approximation} we describe our approximation argument and obtain some preliminary uniform bounds. Section \ref{sect_unif_bounds} is devoted to the proof of some crucial uniform bounds for the approximating solutions. The proofs of the main results are given in Section \ref{sect_proofs}.

\medskip 

\section*{Acknowledgements}
The first two authors have been partially supported by the ``Gruppo Nazionale per l'Analisi Matematica, la Probabilit\`a e le loro Applicazioni'' (GNAMPA) of the ``Istituto Nazionale di Alta Matematica'' (INdAM, Italy).

%%%%%%%%%%%%%%%%%%%%%%
%%%%%%%%%%%%%%%%%%%%%%
%%%%%%%%%%%%%%%%%%%%%%

\section {Notations and the anisotropic setting} \label{sect_notation}
In this section we clarify the notation, make some comments on the main assumptions and provide examples of anisotropic norms.

Let $\Omega\subseteq \R^n$, $n \geq 2$, and let $p \in(1, \infty)$. Given a function $u: \Omega \to \mathbb{R}$, we denote by $\nabla u (x)$ the gradient of $u$ evaluated at a point $x \in \Omega$. Given a function $\psi: \mathbb{R}^n \to \mathbb{R}$, the notation $\nabla_\xi \psi(Du)$ means that we are differentiating the function $\psi$ with respect to $\xi \in \mathbb{R}^n$ and evaluating it at $\nabla u$.

Let  $H: \mathbb{R}^n \to \mathbb{R}$ be a norm of $\mathbb{R}^n$. Throughout the paper, we assume that $H$ is of class $C^2(\R^n\setminus \{0\})$ 
%and is s
%\textit{uniformly elliptic} with costant $\l>0$, i.e. such that
%\begin{equation*}
%H_{\xi_i \xi_j}(\xi)\eta_i\eta_j\geq \l\, H(\xi)^{-1}\abs{\eta}^2,\quad\forall \xi\in\R^n\setminus \{0\},\,\forall\eta\in \nabla_\xi H(\xi)^\perp.
%\end{equation*}
%This is equivalent to 
and we ask that its anisotropic unit ball 
\begin{equation}\label{def-ellH}
B_1^H=\{\xi\in\R^n\,:\,H(\xi)<1  \} \, \quad \text{is \textit{uniformly} convex.}
%;\footnote{i.e. such that the principal curvatures of its boundary are bounded away from zero.}
\end{equation}
This means that all the principal curvatures of its boundary are bounded away from zero (see for instance \cite{cfv}).

%Given a function $u : \Omega \to \mathbb{R}^n$, the differentiation with respect to $x$ is denoted by $\nabla u(x)$. The gradient of $u$ at $x \in \Omega$ can be seen as an element of the dual space of $\mathbb{R}^n$ (which obviously coincides with $\mathbb{R}^n$) which associates with any vector $y \in \mathbb{R}^n$ the scalar $y \cdot \nabla u(x)$, where ``$\cdot$'' denotes the usual scalar product on $\mathbb{R}^n$. Hence, the ambient space $\Omega \subseteq \mathbb{R}^n$ is equipped by 

In view of the smoothness assumptions on the norm $H$ we have 
$$
\frac{H^p}{p} \in C^1(\R^n) \cap C^2(\R^n\setminus \{0\})
$$ 
and we shall denote its gradient by $a=a(\xi)$,  i.e.,

\begin{equation}\label{def-a}
a=a(\xi) =
\begin{cases}
H^{p-1}(\xi)\nabla_\xi H(\xi) &\text{if } \xi\neq 0,
\\
0 &\text{if } \xi = 0.
\end{cases}
\end{equation}

%The dual norm of $H$, which is denoted by $H_0$, is defined by
%$$
%H_0(x)=\sup_{\xi\neq 0}\frac{x \cdot \xi}{H(\xi)}\quad\text{ for
%}x\in\mathbb{R}^n \,.
%$$

We consider a local weak solution $u \in W^{1,p}_{loc}(\Omega)$ to 
%\eqref{eqn1}, 
\begin{equation}\label{def-eqn-studiata}
-\mathrm{div}\left(a(\nabla u)\right)=f \quad\text{in }\Omega,
\end{equation}
i.e., a function $u \in W^{1,p}_{loc}(\Omega)$ satisfying 
\begin{equation}\label{def-eqn-studiata-forma-integrale}
\int_{\Omega}a(\nabla u) \nabla \varphi \,dx = \int_{\Omega} f \varphi \,dx \qquad \forall \, \varphi \in W^{1,p}_c(\Omega), 
\end{equation} 
where $W^{1,p}_{c}(\Omega)$ denotes the the set of compactly supported members of $W^{1,p}(\Omega)$ and the source term $f$ is assumed to belong to $L^q_{loc}(\Omega)$  with
\begin{equation}\label{def-q}
q=
\begin{cases}
2\quad&\text{if }p\geq\frac{2n}{n+2} \,,
\\
(p^*)'\quad&\text{if }1<p<\frac{2n}{n+2} \,.
\end{cases}
\end{equation}
Let us remark that the assumption $q= (p^*)'$, when $1<p<\frac{2n}{n+2}$, is the least one on the source term $f$ in order to have the right-hand side of equation \eqref{def-eqn-studiata-forma-integrale} well defined.\footnote{For $1<p<\frac{2n}{n+2}$, one might use a notion of very weak (or generalized) solution. However, for the sake of clarity and to avoid burying main ideas under technical details, we will not consider this context.}
%Since \eqref{def-a} and \eqref{def-q} are satisfied, a standard density argument  implies that \eqref{def-eqn-studiata-forma-integrale} holds for any $\varphi \in W^{1,p}_{c}(\Omega)$, the set of compactly supported members of $W^{1,p}(\Omega)$.}

Also, for $1<p<\frac{2n}{n+2}$, we always have $2= \left(\frac{2n}{n+2}\right)^* = \left(\left(\frac{2n}{n+2}\right)^* \right)' < (p^*)' = \frac{np}{np-n+p} <\frac{n}{p}$. Therefore, our integrability assumption on $f$ is weaker than the one in \cite{lou}. \footnote{Since \eqref{def-a} and \eqref{def-q} are satisfied, a standard density argument implies that any distributional solution $u \in W^{1,p}_{loc}(\Omega)$ of  \eqref{def-eqn-studiata} is a local weak solution.}

%and we observe that 
%$q\geq 2$, 
%$\left(\frac{2n}{n+2}\right)^*=2$ and that, for $1<p<2n/(n+2)$, we have $2< %(p^*)'<n/p$. In particular, $q\geq 2$ for any $p \in(1, \infty)$.

The assumption \eqref{def-ellH} on the anisotropic unit ball of $H$ implies 
%the  ellipticity 
that $a=a(\xi)$ satisfies some natural growth and ellipticity conditions.  Indeed we notice that, by letting
\begin{equation} \label{B_def}
B(t)=\frac{t^p}{p}\quad\text{for }t>0,
\end{equation}
equation \eqref{def-eqn-studiata} can be written as 
\begin{equation}
-\mathrm{div}\left(\nabla_\xi (B\circ H)(\nabla u)  \right)=f\quad\text{in }\Omega.
\end{equation}
Then, according to \cite[Proposition 3.1]{cfv}, there exist constants $c,C>0$, depending only on $n,p, H$, such that
\begin{equation}\label{eqn:ellHp}
\begin{split}
&\partial_{\xi_i\xi_j}(B\circ H)(\xi)\eta_i\eta_j\geq c \abs{\xi}^{p-2}\abs{\eta}^2
\\
&\sum_{i,j=1}^n\left|\partial_{\xi_i\xi_j}(B\circ H)(\xi)\right|\leq C \abs{\xi}^{p-2},
\end{split}
\end{equation}
for all $\xi\in\R^n\setminus\{0\}$, $\eta\in\R^n$.

%Regarding the source term $f$ in \eqref{eqn1}, we assume that $f \in L^q(\Omega)$, with
%\begin{equation*}
%q=
%\begin{cases}
%2\quad&\text{if }p\geq\frac{2n}{n+2}
%\\
%(p^*)'\quad&\text{if }1<p<\frac{2n}{n+2} \,,
%\end{cases}
%\end{equation*}
%and we observe that $q\geq 2$, and $\left(\frac{2n}{n+2}\right)^*=2$. 

For $n \geq 2$ we shall 
%will use the Sobolev embedding many times, and we
denote by $C_s(r,n)$ the Sobolev constant of the embedding $W^{1,r}\hookrightarrow L^{r^*}$ in $\rn$, for $1<r<n$. We also recall that, when
$p\geq 2n/(n+2)$, by Sobolev and Holder inequality we get
%for any  $v\in W^{1,p}_0(\omega)$,
\begin{equation}\label{dis2}
\begin{split}
\norm{v}_{L^{q'}(\omega)}&=\norm{v}_{L^2(\omega)}\leq C_s\left(\frac{2n}{n+2},n\right)\norm{\nabla v}_{L^{2n/(n+2)}(\omega')}
\\
&\leq C_s\left(\frac{2n}{n+2},n\right)\abs{\omega}^{\frac{1}{2}+\frac{1}{n}-\frac{1}{p}}\norm{\nabla v}_{L^p(\omega)}\qquad \qquad\forall \, v\in W^{1,p}_0(\omega)
\end{split}
\end{equation} 
where $ \omega$ is any open bounded subset of $\R^N$.

%For $n=1,2$ the Sobolev embedding can be replaced by the following argument.

%If $n=1$, then $2n/(n+2)\leq 1$. Thus $q=2$ and, for all $v\in W^{1,p}_0(J)$ we %have
%\begin{equation}\label{dis1}
%\norm{v}_{L^2(J)}\leq \cambio{\xout{C(p)}} \abs{I}^{\frac{1}{2}+\frac{1}{p'}}\norm{v'}_{L^p(J)}=\cambio{\xout{C(p)}}\abs{J}^{\frac{1}{2}+\frac{1}{n}-\frac{1}{p}}\norm{v'}_{L^p(J)},
%\end{equation}
%\cambio{where $J $ is any open bounded subset of $\R$.}
%\vspace{0.1cm}
%
%\xout{We also observe that, for $1<p<2n/(n+2)$, we have that $q=(p^*)'<n/p$. Hence, as already mentioned in the Introduction, the results in Theorem \ref{thm_main1} and in Proposition \ref{prop:1} improve the ones in \cite{lou} also for the classical Euclidean $p$-Laplace equation.} \marginpar{\tiny \todo{da cancellare}}

Finally we recall that the dual norm of $H$, which is denoted by $H_0$, is defined by
$$
H_0(x)=\sup_{\xi\neq 0}\frac{x \cdot \xi}{H(\xi)}
\qquad \forall \, x \in\mathbb{R}^n 
%\text{ for}x\in\mathbb{R}^n \,.
$$
and satisfies the following property (see for instance \cite[Lemma  3.1]{CS})

\begin{equation}\label{prop-H_0} 
H_0(\nabla_\xi H(\xi))=1 \qquad \forall \, \xi \in  \R^n\setminus \{0\}.
\end{equation}

\medskip

We conclude this section by mentioning that interesting examples of norms satisfying \eqref{def-ellH} can be found in \cite{cfv}. In the following example we provide a further one.

\begin{example} 
Let $H_\sharp$ and $H_*$ be two norms of class $C^2(\R^n\setminus \{0\})$, and assume that $H_\sharp$ satisfies \eqref{def-ellH} (and therefore also  \eqref{eqn:ellHp}). Let $a,b >0$ and define 
$$
H(\xi)=\left(aH_\sharp^p (\xi)+bH_*^p (\xi) \right)^{1/p}.
$$
Then $H$ satisfies \eqref{eqn:ellHp} which, in view of \cite[Proposition 3.1]{cfv}, is equivalent to say that $H$ satisfies  \eqref{def-ellH}. 

Indeed, it is clear that $H$ is a norm and that 
$$
B\circ H(\xi) := \frac{H^p(\xi)}{p} = \frac{a}{p}H_\sharp^p (\xi)+\frac{b}{p}H_*^p (\xi) \,.
$$
Since $H_*$ is one-homogeneous and of class $C^2$ outside the origin, we have that $\nabla_\xi^2 H_*^p$ is homogeneous of degree $p-2$, and so the second inequality in \eqref{eqn:ellHp} is fulfilled. The first condition in \eqref{eqn:ellHp} follows from the fact that $H_\sharp$ satisfies \eqref{eqn:ellHp} and since $H_*^p$ is convex and hence its Hessian is nonnegative definite outside the origin. 

As a particular case of this example, we notice that the assumptions on $H_\sharp$ are clearly satisfied by the Euclidean norm $|\cdot|$.

\end{example}

%%%%%%%%%%%%%%%%%%%%%%
%%%%%%%%%%%%%%%%%%%%%%
%%%%%%%%%%%%%%%%%%%%%%

\section {The approximation argument} \label{sect_approximation}
As usual in regularity theory, the starting point of our argument is the choice of an approximating procedure. In this section we set the approximation argument and obtain a preliminary uniform bound which will be useful later.

Let $\e \in [0,1)$ and set $B_\e(t)=B(\sqrt{\e^2+t^2})-B(\e)$ with $B$ given by \eqref{B_def}, i.e.
$$
B_\e(t) =\frac{1}{p}\left(\e^2+t^2  \right)^{\frac{p}{2}}-\e^p/p\
$$
for any $t \geq 0$. 

We set $f_0 := f$ and %For $\e \in (0,1)$ we set 
\begin{equation}\label{def:f-eps}
f_\e :=  \min \Big \{\max \{f, -{\e^{-1} \}, {\e^{-1}} } \Big \}  \qquad  \qquad \forall  \, \e \in (0,1)\, ;
\end{equation}
then
\begin{equation}\label{prop:f-eps}
\begin{cases}
& f_\e \in  L^\infty(\Omega),  \quad \vert f_\e \vert \leq \vert f \vert \quad \text{ a.e. in } \Omega, \\
&f_\e\to f\quad\text{in } L^q_{loc}(\Omega).
%\\
%&\norm{f_\e}_{L^q(\Omega)}\leq \cambio{\xout{2}} \norm{f}_{L^q(\Omega)}
%\\
%& \cambio{\xout{$\norm{f_\e}_{L^2}\leq 2\norm{f}_{L^2}$}} 
\end{cases}
\end{equation} 

Let us fix a subdomain $\Omega' \subset\subset \Omega$ (i.e. compactly contained in $ \Omega$) and let $u_\e$ be the unique weak solution of
\begin{equation}\label{eqn:ue}
\begin{cases}
-\mathrm{div}\left(\nabla_\xi (B_\e\circ H)(\nabla u_\e)  \right)=f_\e\quad\text{in }\Omega'
\\
u_\e=u \quad\text{on }\partial\Omega',
\end{cases}
\end{equation}
where the boundary condition is to be intended as
\[
u_\e-u\in W^{1,p}_0(\Omega').
\]
It is classical that, for every $\e\in [0,1)$, $u_\e$ is the unique minimizer of the strictly convex, coercive and weakly lower semicontinuous functional 
\begin{equation}\label{def:fune}
\mathcal{J}_\e(v)=\frac{1}{p}\int_{\Omega'}\left(\e^2+H^2(\nabla v)  \right)^{\frac{p}{2}}\,dx-\int_{\Omega'} f_\e v\,dx,
\end{equation}
in the closed and convex set
\[
W^{1,p}_u(\Omega')=u+W^{1,p}_0(\Omega').
\]

Now, thanks to \cite[Proposition 3.1 and Lemma 4.1]{cfv}, there exist constants $c, C>0$, depending only on $n,p, H$, such that
\begin{equation}\label{ell:eps}
\partial_{\xi_i\xi_j}(B_\e\circ H)(\xi)\eta_i\eta_j\geq c(\e^2+\abs{\xi}^2)^{\frac{p-2}{2}}\abs{\eta}^2
\end{equation}
and
\begin{equation}\label{ell:eps2}
\sum_{i,j=1}^n\left|\partial_{\xi_i\xi_j}(B_\e\circ H)(\xi)\right|\leq C(\e^2+\abs{\xi}^2)^{\frac{p-2}{2}},
\end{equation}
for all $\eta\in \R^n$, $\xi\in\R^n\setminus \{0\}$. Therefore there exist positive constants $\l,\L$, depending only on $n,p,H$ such that, for all $\eta\in \R^n$, $\xi\in\R^n\setminus \{0\}$, it holds
\begin{equation}\label{ell:fin}
\l \abs{\eta}^2\leq \frac{\<D^2_\xi (B_\e\circ H)(\xi)\eta,\eta\>}{[\e^2+\abs{\xi}^2]^{\frac{p-2}{2}}}\leq \L\abs{\eta}^2.
\end{equation}

The following lemma provides a first useful bound on the approximating functions $u_\e$.

\begin{lemma}
Let $u_\e$ be a solution of \eqref{eqn:ue}. Then, for any $\Omega' \subset\subset \Omega$ and for any $\e\in (0,1)$,
\begin{equation}\label{eqn:stime1}
\int_{\Omega'}\left(\e^2+H^2(\nabla u_\e)\right)^{\frac{p}{2}}dx\leq K_{\Omega'} + 2^p \e^p \abs{\Omega'}
\end{equation}
with
\begin{equation} \label{K_def}
K_{\Omega'} = (2^p+1)%\left(
\int_{\Omega'}H^p(\nabla u)
%\right)
+\underline{C}\,\norm{f}_{L^q(\Omega')}^{p'} .
%K= (2^p+1)\left(\int_{\Omega'}H^p(\nabla %u)\,dx+\abs{\Omega'}\right)+\underline{C}\,\norm{f}_{L^q(\Omega')}^{p'}.
\end{equation}
Here $ \abs{\Omega'}$ denotes the Lebesgue measure of $\Omega'$ and 
$\underline{C}=\underline{C}(n,p,H,\abs{\Omega'})$ is a non-negative constant, independent of  $\e$, that can be explicitly determined. \footnote{ \, \, $\underline{C} = 2^{p'+1}(p-1)\a^{p'}C_0^{p'}$,  where $C_0$ is given by \eqref{valore-C_0} and $\a>0$ is a structural constant such that $\abs{\xi}\leq \a\,H(\xi)$ for any $\xi \in \R^N$. Note that such a constant $\a$ does exist since all the norms on $\R^N$ are equivalent.} 

Furthermore, we have that
\[
u_\e \longrightarrow u \quad\text{strongly in }W^{1,p}(\Omega').
\]
\end{lemma}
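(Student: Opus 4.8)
The plan is to establish the energy bound \eqref{eqn:stime1} via the minimality of $u_\e$, and then to deduce strong $W^{1,p}$ convergence by a standard convexity (uniform monotonicity) argument combined with a limiting identification of the limit.

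First I would exploit that $u_\e$ minimizes $\mathcal{J}_\e$ over $W^{1,p}_u(\Omega')$, so that in particular $\mathcal{J}_\e(u_\e)\le \mathcal{J}_\e(u)$. Writing this out,
\[
\frac1p\int_{\Omega'}\left(\e^2+H^2(\nabla u_\e)\right)^{p/2}\,dx
\le \frac1p\int_{\Omega'}\left(\e^2+H^2(\nabla u)\right)^{p/2}\,dx+\int_{\Omega'} f_\e(u_\e-u)\,dx .
\]
For the first term on the right I use $(\e^2+t^2)^{p/2}\le 2^{p/2}(\e^p+t^p)\le 2^p(\e^p+t^p)$ (after the elementary inequality $(x+y)^{p/2}\le 2^{p/2-1}(x^{p/2}+y^{p/2})$ for $p\ge 2$, and a similar subadditivity bound for $1<p<2$), giving a contribution $\le 2^p\e^p|\Omega'|+2^p\int_{\Omega'}H^p(\nabla u)$. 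For the last term I bound $|f_\e|\le |f|$ by \eqref{prop:f-eps}, and then control $\|u_\e-u\|$ by $\|\nabla u_\e-\nabla u\|_{L^p(\Omega')}$ using \eqref{dis2} (when $p\ge 2n/(n+2)$) or the Sobolev embedding $W^{1,p}_0\hookrightarrow L^{p^*}$ together with the pairing with $f\in L^{(p^*)'}$ (when $1<p<2n/(n+2)$); in both cases $u_\e-u\in W^{1,p}_0(\Omega')$. By Young's inequality with exponents $p,p'$, the cross term $C\|f\|_{L^q(\Omega')}\,\|\nabla u_\e-\nabla u\|_{L^p(\Omega')}$ is absorbed: a small multiple of $\|\nabla u_\e\|_{L^p}^p\le C\int_{\Omega'}(\e^2+H^2(\nabla u_\e))^{p/2}$ (using the equivalence $|\xi|\le\alpha H(\xi)$) goes to the left-hand side, while the rest is a constant of the form $\underline C\,\|f\|_{L^q(\Omega')}^{p'}$ plus a further multiple of $\int H^p(\nabla u)$. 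Collecting terms yields \eqref{eqn:stime1} with $K_{\Omega'}$ as in \eqref{K_def}.

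For the strong convergence, the bound \eqref{eqn:stime1} (uniform in $\e$, since $\e<1$) shows $\|\nabla u_\e\|_{L^p(\Omega')}$ is uniformly bounded, so up to a subsequence $u_\e\rightharpoonup v$ weakly in $W^{1,p}(\Omega')$ with $v-u\in W^{1,p}_0(\Omega')$. To identify $v=u$ and upgrade to strong convergence I would use the uniform monotonicity of the maps $\xi\mapsto\nabla_\xi(B_\e\circ H)(\xi)$: testing the equation \eqref{eqn:ue} for $u_\e$ with $u_\e-u$, and comparing with the analogous identity, one estimates $\int_{\Omega'}\bigl(\nabla_\xi(B_\e\circ H)(\nabla u_\e)-\nabla_\xi(B_\e\circ H)(\nabla u)\bigr)\cdot(\nabla u_\e-\nabla u)\,dx$ from above by $\int_{\Omega'} f_\e(u_\e-u)\,dx$ plus a term involving $\nabla_\xi(B_\e\circ H)(\nabla u)$ tested against $\nabla u_\e-\nabla u$, both of which vanish as $\e\to 0$ by \eqref{prop:f-eps} and weak convergence (note $\nabla_\xi(B_\e\circ H)(\nabla u)\to a(\nabla u)$ in $L^{p'}(\Omega')$ by dominated convergence, using \eqref{ell:eps2}). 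The standard $p$-Laplacian monotonicity inequalities then force $\nabla u_\e\to\nabla u$ in $L^p(\Omega')$, and since the limit is independent of the subsequence the full family converges.

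The main obstacle is the case $1<p<\frac{2n}{n+2}$, where the source term is only in $L^{(p^*)'}$ and one cannot use the simple $L^2$–Sobolev chain \eqref{dis2}; there one must carefully pair $f_\e$ with $u_\e-u$ via the Sobolev embedding $W^{1,p}_0(\Omega')\hookrightarrow L^{p^*}(\Omega')$ and argue that $f_\e\to f$ in $L^{(p^*)'}_{loc}$ so that $\int f_\e(u_\e-u)\to 0$; everything else is a routine application of convexity and Young's inequality. A secondary technical point is handling the degeneracy at $\xi=0$ in the monotonicity step when $p>2$ versus $p<2$, which is dealt with by the classical two-case vector inequalities for the $p$-Laplacian, here available through the ellipticity bounds \eqref{ell:eps}.
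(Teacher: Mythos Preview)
Your argument for the energy bound \eqref{eqn:stime1} is essentially the paper's: minimality $\mathcal J_\e(u_\e)\le \mathcal J_\e(u)$, control of $\|u_\e-u\|_{L^{q'}}$ by $\|\nabla(u_\e-u)\|_{L^p}$ via Sobolev (with the same case split on $p$ versus $2n/(n+2)$), and absorption by Young's inequality.

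For the strong convergence your monotonicity scheme is also the paper's, but there is a gap in the identification step. You claim that
\[
\int_{\Omega'} f_\e(u_\e-u)\,dx
\qquad\text{and}\qquad
\int_{\Omega'} a_\e(\nabla u)\cdot(\nabla u_\e-\nabla u)\,dx
\]
both vanish as $\e\to 0$ ``by weak convergence''. At that point, however, you only know $u_\e\rightharpoonup v$ weakly in $W^{1,p}(\Omega')$ for some $v\in W^{1,p}_u(\Omega')$; the strong--weak pairings therefore converge to $\int_{\Omega'} f(v-u)$ and $\int_{\Omega'} a(\nabla u)\cdot(\nabla v-\nabla u)$, neither of which is zero a priori. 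The paper closes this by inserting a separate identification step before the monotonicity argument: using weak lower semicontinuity of $\mathcal J$ together with the inequality $\mathcal J[u_\e]\le \mathcal J_\e[u]+\int_{\Omega'}(f_\e-f)u_\e$, one obtains $\mathcal J[v]\le \mathcal J[u]$, hence $v=u$ by uniqueness of the minimizer of $\mathcal J$ over $W^{1,p}_u(\Omega')$. Only once $v=u$ is known do the two error terms indeed vanish, and the monotonicity inequality yields $\nabla u_\e\to\nabla u$ in $L^p$. An alternative fix, perhaps what you intended by ``the analogous identity'', is to test the equation satisfied by $u$ with $v-u\in W^{1,p}_0(\Omega')$: then the two nonzero limits cancel, since $\int_{\Omega'} f(v-u)=\int_{\Omega'} a(\nabla u)\cdot\nabla(v-u)$, and the monotonicity argument closes directly without a prior identification of $v$. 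Either route works, but one of them must be made explicit.
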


%%proof of boundedness

\begin{proof}
Since $u_\e$ minimizes the functional \eqref{def:fune} over $W^{1,p}_u(\Omega')=u+W^{1,p}_0(\Omega')$, we can take $u$ as a competitor. This choice leads to  
%$v=u$ in \eqref{def:fune} to get

%We follow a variational approach: $u_\e$ is the minimizer of the strictly convex, coercive and weakly lower semicontinuous functional
%\begin{equation}\label{def:fune}
%\mathcal{J}_\e(v)=\frac{1}{p}\int_{\Omega'}\left(\e^2+H^2(\nabla v)  \right)^{\frac{p}{2}}\,dx-\int_{\Omega'} f_\e v\,dx,
%\end{equation}
%in the closed and convex set
%\[
%W^{1,p}_u(\Omega')=u+W^{1,p}_0(\Omega').
%\]
%Therefore, taking $v=u$ in \eqref{def:fune}, we get
\begin{equation}\label{ineq:weak}
\begin{split}
\frac{1}{p}\int_{\Omega'} \left(\e^2+H^2(\nabla u_\e)  \right)^{\frac{p}{2}}\,dx &\leq \frac{1}{p}\int_{\Omega'}\left(\e^2+H^2(\nabla u)  \right)^{\frac{p}{2}}\,dx+\int_{\Omega'} f_\e(u_\e-u)\,dx
\\
&\leq \frac{1}{p}\int_{\Omega'}\left(\e^2 +H^2(\nabla u)  \right)^{\frac{p}{2}}\,dx+\norm{f_\e}_{L^q({\Omega'})}\norm{u_\e-u}_{L^{q'}(\Omega')}.
\end{split}
\end{equation}
%We notice that, by \eqref{dis1} and \eqref{dis2},  we have
Then, 
\begin{equation}
\begin{split}
\norm{u_\e-u}_{L^{q'}(\Omega')}&=
\begin{cases}
\norm{u_\e-u}_{L^2}\quad&\text{if } p\geq 2n/(n+2)
\\
\\
\norm{u_\e-u}_{L^{p^*}}\quad&\text{if } p< 2n/(n+2)
\end{cases}
\\
\\
&\leq
\begin{cases}
%C_s(\frac{2n}{n+2},n)
C_s\left(\frac{2n}{n+2},n\right) \norm{\nabla u_\e-\nabla u}_p\,\abs{\Omega'}^{\frac{1}{2}+\frac{1}{n}-\frac{1}{p}}\quad&\text{if } p\geq 2n/(n+2)
\\
\\
C_s(p,n) \norm{\nabla u_\e-\nabla u}_p \quad&\text{if } p<2n/(n+2)
\end{cases}
\end{split}
\end{equation}
where in the latter we have used  \eqref{dis2}.
%\begin{equation*}
%C'= 
%\begin{cases}
%C_s\left(\frac{2n}{n+2},n\right)%\,\abs{\Omega'}^{\frac{1}{2}+\frac{1}{n}-\frac{1}{p}}
%\quad&\text{if } n \geq 2
%\\
%\\
%1 \quad&\text{if } n=1 \,.
%\end{cases}
%\end{split}
%\end{equation*}
Hence, 
\begin{equation}
\norm{u_\e-u}_{L^{q'}(\Omega')} \leq  C_0 \Big(\norm{\nabla u_\e}_{L^p(\Omega')}+\norm{\nabla u}_{L^p(\Omega')} \Big)
\end{equation}
where 
\begin{equation}\label{valore-C_0}
C_0=
\begin{cases}
C_s\left(\frac{2n}{n+2},n\right) \,\abs{\Omega'}^{\frac{1}{2}+\frac{1}{n}-\frac{1}{p}}\quad&\text{if } p\geq 2n/(n+2)
\\
\\
C_s(p,n) \quad&\text{if } p<2n/(n+2) \,.
\end{cases}
\end{equation}

Therefore, for any $\delta>0$, by weighted Young's inequality we obtain
\begin{equation*}
\begin{split}
\norm{f_\e}_{L^q(\Omega')}\,&\norm{u_\e -u}_{L^{q'}(\Omega')}\leq \Big(\norm{\nabla u_\e}_{L^p(\Omega')}+\norm{\nabla u}_{L^p(\Omega')} \Big)C_0 \norm{f_\e}_{L^q(\Omega')}
\\
&\leq \frac{\delta^p}{p}\Big(\norm{\nabla u_\e}_{L^p(\Omega')}+\norm{\nabla u}_{L^p(\Omega')} \Big)^p+\frac{\left( C_0\norm{f_\e}_{L^q(\Omega')}\right)^{p'}}{\delta^{p'}\,p'} \,.
\end{split}
\end{equation*}

By plugging the above inequality in \eqref{ineq:weak} we infer
\begin{equation}
\begin{split}
\frac{1}{p}\int_{\Omega'} &\left(\e^2+H^2(\nabla u_\e)  \right)^{\frac{p}{2}}\,dx\leq\frac{1}{p}\int_{\Omega'}\left(\e^2+H^2(\nabla u)  \right)^{\frac{p}{2}}\,dx+
\\
&+\frac{2^{p-1}\delta^p\a^p}{p}\left(\int_{\Omega'}H^p(\nabla u_\e) dx+ \int_{\Omega'}H^p(\nabla u) dx \right)+\frac{C_0^{p'}\norm{f_\e}_{L^q(\Omega')}^{p'}}{\delta^{p'}p'},
\end{split}
\end{equation}
where $\a>0$ is such that $\abs{\xi}\leq \a\,H(\xi)$. By choosing $\delta=1/2\a$ we find
\begin{equation}
\begin{split}
\int_{\Omega'} \left(\e^2+H^2(\nabla u_\e)  \right)^{\frac{p}{2}}\,&dx\leq 2 \int_{\Omega'}\left(\e^2+H^2(\nabla u)  \right)^{\frac{p}{2}}\,dx+
\\
+&\int_{\Omega'}H^p(\nabla u) dx+2^{p'+1}(p-1)\a^{p'}C_0^{p'}\norm{f_\e}_{L^q(\Omega')}^{p'}
\\
&\leq (2^p+1)\int_{\Omega'}H^p(\nabla u) dx+2^{p'+1}(p-1)\a^{p'}C_0^{p'}\norm{f_\e}_{L^q(\Omega')}^{p'} +  2^p \e^p\abs{\Omega'}
\end{split} 
\end{equation}
and the desired inequality \eqref{eqn:stime1} follows by recalling \eqref{prop:f-eps}.

Now we show that 
\begin{equation*}
u_\e\to u\quad\text{in }W^{1,p}(\Omega').
\end{equation*}
We first notice that $\Vert u_\e \Vert_{W^{1,p}(\Omega')}$ is uniformly bounded in $\e$ 
%in $W^{1,p}(\Omega)$ 
thanks to Poincar\'e inequality on $\Omega'$ and \eqref{eqn:stime1}. 
We can therefore extract a subsequence, relabeled as $u_\e$, such that
\[
u_\e\rightharpoonup w\quad\text{weakly in }W^{1,p}(\Omega'),
\]
for some function $w\in W^{1,p}_u(\Omega')$, since this set is weakly closed (being closed and convex). 
%by Mazur's theorem.
We want to show that $w=u$ on $\Omega'$.

We recall that $u$ is the unique minimizer of the functional
\begin{equation*}
\mathcal{J}[v]\coloneqq \frac{1}{p}\int_{\Omega'} H^p(\nabla v)\,dx-\int_{\Omega'} f\,v\,dx\quad\text{in } W^{1,p}_u(\Omega').
\end{equation*}
Again, since $\mathcal{J}_\e[u_\e]\leq \mathcal{J}_\e[u] $, we obtain
\begin{equation}\label{ff:1}
\int_{\Omega'}\frac{H^p(\nabla u_\e)}{p}\,dx\leq\frac{1}{p}\int_{\Omega'} \left(\e^2+H^2(\nabla u_\e)\right)^{\frac{p}{2}}dx\leq \frac{1}{p}\int_{\Omega'}\left(\e^2+H^2(\nabla u)  \right)^{\frac{p}{2}}+\int_{\Omega'} f_\e(u_\e-u)\,dx.
\end{equation}

Therefore
\begin{equation}\label{f:1}
\begin{split}
\mathcal{J}[u_\e]&=\frac{1}{p}\int_{\Omega'} H^p(\nabla u_\e)\,dx-\int_{\Omega'} f\,u_\e\,dx
\\
&\leq \frac{1}{p}\int_{\Omega'}\left(\e^2+H^2(\nabla u)  \right)^{\frac{p}{2}}-\int_\Omega f_\e\,u\,dx+\int_{\Omega'} (f_\e-f)\,u_\e\,dx
\\
&=\mathcal{J}_\e[u]+\int_{\Omega'} (f_\e-f)\,u_\e\,dx.
\end{split}
\end{equation}
We know that $f_\e\to f$ in $L^q(\Omega)$ and $u_\e$ is uniformly bounded in $L^{q'}(\Omega')$ by Sobolev inequality; hence
\[
\int_{\Omega'} (f_\e-f)\,u_\e\,dx\to 0\quad\text{as  }\e\to0.
\]
By the weak lower semicontinuity of the functional $\mathcal{J}$ and \eqref{f:1}, we then infer
\begin{equation}
\mathcal{J}[w]\leq \liminf_{\e\to0} \mathcal{J}[u_\e]\leq\liminf_{\e\to0} \left(\mathcal{J}_\e[u]+\int_{\Omega'} (f_\e-f)\,u_\e\,dx\right)=\mathcal{J}[u], 
\end{equation}
which implies that $w=u$ on $\Omega'$ by the uniqueness of minimizers of $\mathcal{J}$. 
By repeating the above argument for any subsequence $\{u_{\e_n}\}\subset\{u_\e\}$, we infer that the whole sequence $u_\e\to u$ weakly in $W^{1,p}(\Omega')$.

\medskip

%From \eqref{ff:1} we also infer \marginpar{\tiny \todo{togliere (40) e (41)} }
%\begin{equation}
%\begin{split}
%\limsup_{\e\to0}\frac{1}{p}\int_{\Omega'}  H^p(\nabla u_\e)\,dx&\leq\limsup_{\e\to0} \left( \frac{1}{p}\int_{\Omega'} \left(\e^2+H^2(\nabla u)  \right)^{\frac{p}{2}}dx+\int_{\Omega'}  f_\e\,(u_\e-u)\,dx  \right)
%\\
%&=\frac{1}{p}\int_{\Omega'}  H^p(\nabla u)\,dx,
%\end{split}
%\end{equation}
%hence, by its weak lower semicontinuity, we get
%\begin{equation}
%\lim_{\e\to0} \int_\Omega H^p(\nabla u_\e)dx=\int_\Omega H^p(\nabla u)dx \,.
%\end{equation}

We now show that $u_\e\to u $ strongly in $W^{1,p}(\Omega)$. By \cite[Lemma 1]{tol}, we have

\begin{equation}
\left[a_\e(\nabla u)-a_\e(\nabla u_\e)\right]\cdot[\nabla u-\nabla u_\e]\geq G_\e\coloneqq \gamma_0
\begin{cases}
(1+\abs{\nabla u}+\abs{\nabla u_\e})^{p-2}\abs{\nabla u-\nabla u_\e}^2\quad &p<2
\\
\abs{\nabla u-\nabla u_\e}^p\quad &p\geq 2,
\end{cases}
\end{equation}
where we set
\begin{equation}\label{def-a-epsilon}
a_\e(\xi)\coloneqq \nabla_\xi (B_\e\circ H)(\xi) =
\begin{cases}
[\e^2+H^2(\xi)]^{\frac{p-2}{2}}H(\xi) \nabla H(\xi) 
%H^{p-1}(\xi)\nabla_\xi H(\xi) 
&\text{if } \xi\neq 0,
\\
0 &\text{if } \xi = 0.
\end{cases}
\end{equation}
%\[
%a_\e(\xi)\coloneqq \nabla_\xi (B_\e\circ H)(\xi) \,;
%\]
Notice that, by using this notation, $u_\e$ is a weak solution to
\[
-\mathrm{div}(a_\e(\nabla u_\e))=f_\e\quad\text{in }\Omega'.
\]
Therefore
\begin{equation}
\begin{split}
0\leq\int_{\Omega'}  G_\e\,dx&\leq \int_{\Omega'} \left[a_\e(\nabla u)-a_\e(\nabla u_\e)\right]\cdot[\nabla u-\nabla u_\e]\,dx
\\&=\int_{\Omega'}  a_\e(\nabla u)\cdot[\nabla u-\nabla u_\e]\,dx-\int_{\Omega'}  a_\e(\nabla u_\e)\cdot[\nabla u-\nabla u_\e]\,dx
\\
&=I_1(\e)+I_2(\e).
\end{split}
\end{equation}

Now we show that $I_1(\e)$ and $I_2(\e)$ vanish at the limit $\e \to 0$. To this end, we observe  that \eqref{def-a-epsilon} implies 
%$$
%\vert a_\e(\xi) \vert \leq [\e^2+H^2(\xi)]^{\frac{p-2}{2}}H(\xi) \vert \nabla H(\xi) %\vert \leq
%C(H,p) \left(1+\abs{\xi}  \right)^{p-1} 
%\quad \forall \, \xi \in \R^n, \quad \forall \e \in (0,1),
%$$
%where $C(H,p)$ is a constant depending only on $H$ and $p$.
%Therefore 
\[
\abs{a_\e(\nabla u)}  \leq C(H,p) \left(1+\abs{\nabla u}  \right)^{p-1} \quad \text{a.e. in } \Omega', \quad \forall \e \in (0,1),
\]
and so $a_\e(\nabla u)\to a(\nabla u)$ in $L^{p'}(\Omega')$,
%\[
%\abs{a_\e(\nabla u)}\lesssim \left(1+\abs{\nabla u}  \right)^{p-1} \,,
%\]
by dominated convergence. Since $\nabla u_\e\rightharpoonup \nabla u$ in $L^p(\Omega')$
%yields $a_\e(\nabla u)\to a(\nabla u)$ in $L^{p'}(\Omega')$, 
we immediately obtain that
\[
I_1(\e)\to 0 \quad \text{as} \quad \e\to0 \,.
\]
Regarding $I_2(\e)$, we notice that by testing the equation %of $u_\e$ 
\eqref{eqn:ue} with the test function $u-u_\e$, we have
\begin{equation}
I_2(\e)=-\int_{\Omega'}  f_\e\,(u-u_\e)\,dx.
\end{equation}
We first recall that $u_\e\to u$ weakly in $W^{1,p}(\Omega')$. Moreover, as seen before, $u_\e$ is uniformly bounded in $L^{q'}(\Omega')$ w.r.t. $\e$, then, up to a subsequence, $u_{\e_n}\to u$  weakly in $L^{q'}(\Omega')$. Again, by repeating the argument for any subsequence, we find
\[
u_\e\to u\quad\text{weakly in }L^{q'}(\Omega')\quad\text{and }\,f_\e\to f\quad\text{strongly in } L^q(\Omega) \,,
\]
which imply $I_2(\e)\to 0$ as $\e\to 0$.

Thus we have obtained that
\begin{equation}\label{f:2}
\int_{\Omega'} G_\e\,dx\to0\quad\text{as }\e\to0.
\end{equation}
If $p\geq 2$ then this is exactly the strong convergence of $u_\e$ to $u$ in $W^{1,p}(\Omega')$.

When $p<2$, by Holder's inequality we have
\begin{equation*}
\begin{split}
%\left(
\int_{\Omega'}\abs{\nabla (u_\e-u)}^p dx 
%\right)^{1/p}
\leq
&\left(\int_{\Omega'}(1+\abs{\nabla u}+\abs{\nabla u_\e})^{p-2}\abs{\nabla (u_\e-u)}^2 dx \right)^{\frac{p}{2}}
\\
&\times\left( \int_{\Omega'}(1+\abs{\nabla u}+\abs{\nabla u_\e})^p dx \right)^{\frac{2-p}{2}},
\end{split}
\end{equation*}
which goes to $0$ as $\e\to 0$. The latter implies the desired conclusion also for $p<2$,  which concludes the proof.

\end{proof}

The following lemma collects some properties for $u_\e$ which will be useful later.

\begin{lemma} \label{lemma_reg_u_ep}
%Let $u_\e$ be a solution of \eqref{eqn:ue}. Then,	
%Let $u_\e$ be a solution of \eqref{eqn:ue}. Then, for any $\Omega''\subset\subset\Omega'$, there exists $\theta\in(0,1)$ such that
%\begin{equation*}
%u_\e\in H^2(\Omega'')\cap C^{1,\theta}(\Omega''),
%\end{equation*}
Let $u_\e$ be a solution of \eqref{eqn:ue}. Then,	
\begin{equation*}
u_\e\in H^2_{loc}(\Omega)\cap C^1(\Omega)
\end{equation*}
and
\begin{equation*}
a_\e(\nabla u_\e) = \Big(a^1_\e(\nabla u_\e),..., a^n_\e(\nabla u_\e)\Big) \in H^1_{loc}(\Omega;\rn).
\end{equation*}
Furthermore, for any $j,k=1,..., n, $
\begin{equation}\label{eqn-derivate}
\partial_{x_k} a^j_\e(\nabla u_\e) = \sum_{m=1}^n 
\frac{\partial a^j_\e}{\partial \xi_m} (\nabla u_\e) \frac{\partial}{\partial x_k} \Big(\frac{\partial u_\e}{\partial x_m} \Big) \qquad {a.e.} \quad {in}\quad \Omega, %\qquad k=1,...,n,
\end{equation}
where the products on the r-h-s are to be interpreted as zero whenever their second factor is zero, irrespective of whether $\frac{\partial a^j_\e}{\partial \xi_m} $ is defined.

\end{lemma}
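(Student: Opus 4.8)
The plan is to fix $\e\in(0,1)$, reduce to uniformly elliptic linear theory by exploiting the local gradient bound for $u_\e$, run a standard difference-quotient argument to obtain second-order Sobolev regularity, and finally prove the pointwise identity \eqref{eqn-derivate} by treating separately the set where $\nabla u_\e$ vanishes. All constants below are allowed to depend on $\e$; their behaviour as $\e\to0$ is irrelevant here and is dealt with in Section \ref{sect_unif_bounds}. As a starting point, note that $f_\e\in L^\infty(\Omega')\subset L^\infty_{loc}(\Omega')$ and that $a_\e=\nabla_\xi(B_\e\circ H)$ satisfies the $p$-Laplace type structure conditions \eqref{ell:eps}--\eqref{ell:fin} with the non-degenerate weight $(\e^2+|\xi|^2)^{(p-2)/2}$; hence, by classical interior regularity for quasilinear equations of $p$-Laplace type with bounded source term, $u_\e\in C^{1,\alpha}_{loc}(\Omega')$ for some $\alpha\in(0,1)$, so in particular $u_\e\in C^1(\Omega')$ and $\nabla u_\e\in L^\infty_{loc}(\Omega')$. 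This $C^{1,\alpha}$-regularity is the only input taken from outside.

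Next I would fix $\Omega''\subset\subset\Omega'$ and set $M:=\norm{\nabla u_\e}_{L^\infty(\Omega'')}$. Because $\e>0$ is fixed, the weight $(\e^2+|\xi|^2)^{(p-2)/2}$ is bounded above and below by positive constants on $\{|\xi|\le M\}$, so \eqref{ell:fin} together with the fundamental theorem of calculus gives, for all $\xi,\xi'$ with $|\xi|,|\xi'|\le M$,
\[
[a_\e(\xi)-a_\e(\xi')]\cdot(\xi-\xi')\ge c_\e(M)\,|\xi-\xi'|^2\qquad\text{and}\qquad|a_\e(\xi)-a_\e(\xi')|\le C_\e(M)\,|\xi-\xi'|,
\]
with $c_\e(M),C_\e(M)>0$ depending only on $\e,M,n,p,H$. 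Then I would carry out the usual difference-quotient computation: with $\eta\in C^\infty_c(\Omega'')$ a cutoff and $\tau_{h,k}v(x):=(v(x+he_k)-v(x))/h$ the increment quotient in direction $e_k$, test the weak form of $-\mathrm{div}(a_\e(\nabla u_\e))=f_\e$ against $\tau_{-h,k}(\eta^2\tau_{h,k}u_\e)$. The monotonicity bound controls $\int\eta^2|\tau_{h,k}\nabla u_\e|^2\,dx$, while the Lipschitz bound, Cauchy--Schwarz, Young's inequality and the $L^2$-norms of $f_\e$ and $\nabla u_\e$ on $\Omega''$ absorb the remaining terms; one thus obtains a bound on $\norm{\tau_{h,k}\nabla u_\e}_{L^2}$ uniform in $h$. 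Letting $h\to0$ yields $\nabla u_\e\in H^1_{loc}(\Omega')$, that is $u_\e\in H^2_{loc}(\Omega')$; and since $a_\e$ is locally Lipschitz on $\R^n$ while $\nabla u_\e\in H^1_{loc}\cap C^0$, it follows that $a_\e(\nabla u_\e)\in H^1_{loc}(\Omega';\R^n)$.

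It remains to prove \eqref{eqn-derivate}. On the open set $\{\nabla u_\e\ne0\}$ the map $a_\e$ is of class $C^1$, so there \eqref{eqn-derivate} is the classical chain rule for the composition of a $C^1$ map with a Sobolev function. On $Z_\e:=\{\nabla u_\e=0\}$ I would argue that both sides vanish almost everywhere. For the right-hand side: since $u_\e\in H^2_{loc}$, each $\partial_{x_m}u_\e\in H^1_{loc}$, and, a Sobolev function having vanishing gradient a.e. on its zero set, $\partial_{x_k}\partial_{x_m}u_\e=0$ a.e. on $\{\partial_{x_m}u_\e=0\}\supseteq Z_\e$; hence, reading the products with the stated convention, the sum on the right of \eqref{eqn-derivate} is $0$ a.e. on $Z_\e$. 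For the left-hand side: each $a^j_\e(\nabla u_\e)\in H^1_{loc}$, and since $a_\e$ is Lipschitz with $a_\e(0)=0$ one has $|a^j_\e(\nabla u_\e)|\le L|\nabla u_\e|$, so $a^j_\e(\nabla u_\e)$ vanishes on $Z_\e$ and, again by the same property of Sobolev functions, $\partial_{x_k}a^j_\e(\nabla u_\e)=0$ a.e. on $Z_\e$. Combining the two regimes gives \eqref{eqn-derivate} a.e.

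The genuinely delicate step, I expect, is this last one: justifying the chain rule across the set $Z_\e$, where $a_\e$ may fail to be differentiable, by showing that both members of \eqref{eqn-derivate} vanish there. The $C^{1,\alpha}$-regularity input and the passage to the limit in the difference quotients are routine, the only point to keep track of being that $c_\e(M)$ and $C_\e(M)$ may degenerate as $\e\to0$ — harmless at this stage, and handled separately in Section \ref{sect_unif_bounds}.
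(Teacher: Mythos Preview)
Your argument is correct, but it differs from the paper's in strategy. The paper proceeds entirely by citation: continuity from Serrin~\cite{ser}, then $C^1$ and second-order Sobolev regularity from Tolksdorf~\cite{tol} (giving $H^2_{loc}$ for $p\ge 2$ and $W^{2,p}_{loc}$ for $p\le 2$), upgraded to $H^2_{loc}$ in the case $p\le 2$ via \cite[Proposition~4.3]{cfv} once $\nabla u_\e\in L^\infty_{loc}$ is known; finally, the chain rule \eqref{eqn-derivate} and the membership $a_\e(\nabla u_\e)\in H^1_{loc}$ are obtained in one stroke from the Marcus--Mizel theorem \cite[Theorem~2.1]{mm}, using only that $a_\e\in C^1(\R^n\setminus\{0\})\cap Lip_{loc}(\R^n)$. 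You instead take the $C^{1,\alpha}$ regularity as the sole external input and reprove everything else by hand: the local gradient bound lets you freeze the weight $(\e^2+|\xi|^2)^{(p-2)/2}$ between positive constants, reducing to a uniformly monotone and Lipschitz operator on which the standard difference-quotient method yields $u_\e\in H^2_{loc}$ directly; then you establish the chain rule by splitting $\Omega'$ into the open set $\{\nabla u_\e\ne 0\}$ (where $a_\e$ is $C^1$ and the classical chain rule applies) and $Z_\e$ (where Stampacchia's lemma forces both $D^2 u_\e$ and $\nabla a_\e(\nabla u_\e)$ to vanish a.e.). Your route is more self-contained and transparent about why the fixed $\e>0$ is what makes the argument work, at the cost of reproducing material that the cited references already package; the paper's route is shorter but requires the reader to look up several results, and in particular invokes Marcus--Mizel precisely to avoid the delicate splitting on $Z_\e$ that you carry out explicitly.
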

\begin{proof}
Since $f_\e \in L^\infty_{loc}(\Omega)$, thanks to \cite{ser} we have that $u_\e\in C^0(\Omega)$. Then, thanks to  conditions \eqref{ell:eps}-\eqref{ell:eps2}, we may apply \cite[Theorem 1, Proposition 1]{tol} and obtain
\begin{equation}
\begin{split}
&u_\e\in H^{2}_{loc}(\Omega) \cap C^1(\Omega) \quad\text{if }p\geq2
\\
&u_\e\in W^{2,p}_{loc}(\Omega) \cap C^1(\Omega) \quad\text{if }p\leq2.
\end{split}
\end{equation}
Since $\nabla u_\e \in C^0(\Omega) \subset L^\infty_{loc}(\Omega) $, we infer
\[
u_\e\in H^{2}_{loc}(\Omega)
\]
also in the case $p\leq 2$ by applying \cite[Proposition 4.3]{cfv}.

Now we notice that \cite[Lemma 4.1]{cfv} implies 
\[
a_\e(\xi)\in C^1(\R^n\setminus \{0\})\cap Lip_{loc}(\R^n)
\]
and, from \cite[Theorem 2.1]{mm} (see also \cite[section  11]{leo}), we obtain that 
\[
a_\e(\nabla u_\e)\in H^1_{loc}(\Omega;\rn) 
\]
and \eqref{eqn-derivate}, 
which completes the proof.
\end{proof}

%%%%%%%%%%%%%%%%%%%%%%
%%%%%%%%%%%%%%%%%%%%%%
%%%%%%%%%%%%%%%%%%%%%%

\section {Preliminary uniform bounds}  \label{sect_unif_bounds}
In this section we obtain some crucial integral inequalities for the solutions $u_\e$ of the approximating problems, which allow us to bound some relevant integral quantities uniformly in $\e$.

Let 
\[
Z_\e=\{x\in\Omega \,:\nabla u_\e=0\}
\]
be the set of critical points of $u_\e$. Therefore, in view of Lemma \ref {lemma_reg_u_ep}, 
we have
\[
D^2 u_\e=0\quad\text{a.e. in }Z_\e, 
\] 
and so 
\begin{equation}\label{eq:chain}
\nabla a_\e(\nabla u_\e)=
\begin{cases}
A_\e \, D^2 u_\e \quad& \text{a.e. on }\,Z_\e^c,
\\
0 \quad&\text{a.e. on }\,Z_\e,
\end{cases}
\end{equation}
where the symmetric matrix
\[
A_\e(x)=\nabla_\xi a_\e(\nabla u_\e)
\]
is well defined for $x\not\in Z_\e$.

\begin{proposition} \label{prop_int_bound}
Let $u_\e$ be a solution of \eqref{eqn:ue}. Then there exists a constant $C_1=C_1(n,p,H)$ such that, for any function $\eta\in C_c^{0,1}(\Omega)$ and for any $\e\in (0,1)$, we have 
\begin{equation}\label{eqn:ut1}
\begin{split}
\int_{\Omega} \eta^2[\e^2+H^2(\nabla u_\e)]^{p-2}\norm{D^2 u_\e}^2 dx\leq& C_1\int_{\Omega}  [\e^2+H^2(\nabla u_\e)]^{p-2}H^2(\nabla u_\e)\abs{\nabla \eta}^2\,dx
\\
&+C_1\int_{\Omega}\eta^2 f_\e^2 dx.
\end{split}
\end{equation}
\end{proposition}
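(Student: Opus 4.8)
The plan is a Caccioppoli-type argument for the equation \eqref{eqn:ue} differentiated in the coordinate directions. By Lemma \ref{lemma_reg_u_ep} we have $u_\e\in H^2_{loc}(\Omega)\cap C^1(\Omega)$ and $a_\e(\nabla u_\e)\in H^1_{loc}(\Omega;\rn)$, together with the chain rule \eqref{eqn-derivate}. Hence \eqref{eqn:ue} may be differentiated: testing its weak formulation with $-\partial_{x_k}\phi$, $\phi\in C_c^\infty$, and integrating by parts (legitimate because $a_\e(\nabla u_\e)\in H^1_{loc}$) gives
\begin{equation*}
\int_\Omega\partial_{x_k}\bigl(a_\e(\nabla u_\e)\bigr)\cdot\nabla\phi\,dx=-\int_\Omega f_\e\,\partial_{x_k}\phi\,dx ,
\end{equation*}
and since $f_\e\in L^\infty_{loc}$ this identity extends by density to all $\phi\in W^{1,2}$ with compact support. (Equivalently, one could run the argument with difference quotients in the direction $x_k$ and pass to the limit, using the $H^2_{loc}$-bound to control the quotients.)

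I would then insert $\phi=\eta^2\,\partial_{x_k}u_\e$, which lies in $W^{1,2}$ with compact support because $u_\e\in H^2_{loc}\cap C^1$. Using \eqref{eqn-derivate} to write $\partial_{x_k}\bigl(a^j_\e(\nabla u_\e)\bigr)=\sum_m(A_\e)^{jm}\partial_{x_kx_m}u_\e$ (with the convention on $Z_\e$), expanding $\nabla(\eta^2\partial_{x_k}u_\e)=\eta^2\nabla\partial_{x_k}u_\e+2\eta\,\partial_{x_k}u_\e\,\nabla\eta$, and summing over $k$, one gets
\begin{multline*}
\int_\Omega\eta^2\sum_k\langle A_\e\nabla\partial_{x_k}u_\e,\nabla\partial_{x_k}u_\e\rangle\,dx\\
=-2\int_\Omega\eta\sum_k\partial_{x_k}u_\e\,\langle A_\e\nabla\partial_{x_k}u_\e,\nabla\eta\rangle\,dx-\int_\Omega f_\e\,\mathrm{div}(\eta^2\nabla u_\e)\,dx,
\end{multline*}
where $\mathrm{div}(\eta^2\nabla u_\e)=2\eta\,\nabla\eta\cdot\nabla u_\e+\eta^2\Delta u_\e$.

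Now the three terms are handled as follows. (i) The left-hand side is bounded below by a positive multiple of $\int_\Omega\eta^2[\e^2+H^2(\nabla u_\e)]^{p-2}\norm{D^2u_\e}^2\,dx$ by the uniform ellipticity \eqref{ell:fin} and the comparability of $H$ with $\abs{\cdot}$. (ii) The cross term is estimated, via the upper bound in \eqref{ell:fin}, Cauchy--Schwarz and Young's inequality with a small parameter, by $\tfrac14$ of the left-hand side plus a multiple of $\int_\Omega[\e^2+H^2(\nabla u_\e)]^{p-2}H^2(\nabla u_\e)\,\abs{\nabla\eta}^2\,dx$. (iii) In the source term, $\bigl|\int f_\e\,2\eta\,\nabla\eta\cdot\nabla u_\e\bigr|\le\int\eta^2 f_\e^2+\int\abs{\nabla\eta}^2|\nabla u_\e|^2$, the second integral being absorbed into the term in (ii), while $\bigl|\int f_\e\,\eta^2\Delta u_\e\bigr|\le\sqrt n\int\eta^2|f_\e|\,\norm{D^2u_\e}$ is split by Young's inequality so as to absorb another $\tfrac14$ of the left-hand side. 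Collecting the estimates, carrying out the absorptions, and controlling the remaining source integral (see below) gives \eqref{eqn:ut1}.

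The step I expect to be the main obstacle is the last one in (iii): after Young's inequality the source term leaves $\int_\Omega\eta^2 f_\e^2\,[\e^2+H^2(\nabla u_\e)]^{-(p-2)}\,dx$, and bounding it by $C_1\int_\Omega\eta^2 f_\e^2\,dx$ with $C_1=C_1(n,p,H)$ (in particular, independent of $\e$ and of $u_\e$) forces one to exploit the structure of the regularized problem — that $\e\in(0,1)$, and the pointwise identity $f_\e=-\mathrm{tr}(A_\e D^2u_\e)$, hence $f_\e=0$ a.e.\ on $Z_\e$, which follows from the equation and \eqref{eq:chain}. A secondary, technical, issue is the rigorous justification of $\eta^2\partial_{x_k}u_\e$ as an admissible test function in the differentiated identity: one must check that each term is well-defined and stable under $W^{1,2}$-approximation of the test function — this is precisely where the regularity from Lemma \ref{lemma_reg_u_ep} and the bound $f_\e\in L^\infty_{loc}$ are used — and one must treat $Z_\e$ through the convention in \eqref{eqn-derivate} so that all products with $A_\e$ are meaningful a.e.
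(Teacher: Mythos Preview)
Your Caccioppoli scheme with test function $\phi=\eta^2\partial_{x_k}u_\e$ does not produce the weight $[\e^2+H^2(\nabla u_\e)]^{p-2}$ claimed in step~(i). After summing in $k$, the principal term is
\[
\int_\Omega \eta^2\,\mathrm{tr}\bigl(A_\e\,(D^2u_\e)^2\bigr)\,dx,
\]
which contains only \emph{one} factor of $A_\e$. The lower bound in \eqref{ell:fin} therefore gives
\[
\int_\Omega \eta^2\,\mathrm{tr}\bigl(A_\e\,(D^2u_\e)^2\bigr)\,dx
\;\ge\;\lambda\int_\Omega \eta^2\,[\e^2+H^2(\nabla u_\e)]^{\frac{p-2}{2}}\,\norm{D^2u_\e}^2\,dx,
\]
with exponent $\tfrac{p-2}{2}$, not $p-2$. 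For $p\neq 2$ there is no uniform pointwise inequality $[\e^2+H^2]^{(p-2)/2}\ge c\,[\e^2+H^2]^{p-2}$, so (i) fails as stated. This weight mismatch then contaminates (ii) and (iii): in particular, after Young's inequality the $\eta^2 f_\e\,\Delta u_\e$ contribution leaves the term $\int\eta^2 f_\e^2\,[\e^2+H^2]^{(2-p)/2}$ (or $[\e^2+H^2]^{2-p}$ in your normalisation), which for $p>2$ blows up like $\e^{2-p}$ and for $p<2$ is unbounded where $|\nabla u_\e|$ is large. Your proposed cure via $f_\e=-\mathrm{tr}(A_\e D^2u_\e)$ does not close the loop: it yields $|f_\e|\le C[\e^2+H^2]^{(p-2)/2}\norm{D^2u_\e}$, hence bounds the bad integral by $C\int\eta^2\norm{D^2u_\e}^2$, a quantity which is neither dominated by $\int\eta^2 f_\e^2$ nor absorbable into the left-hand side (no small parameter is available).

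The paper avoids this by testing the differentiated equation with $\varphi=\eta^2\,a_\e^k(\nabla u_\e)$ instead of $\eta^2\partial_{x_k}u_\e$. The principal term then becomes $\int\eta^2\,\mathrm{tr}\bigl((A_\e D^2u_\e)^2\bigr)$, with \emph{two} factors of $A_\e$, so that two applications of the trace inequality and \eqref{ell:fin} yield exactly the weight $[\e^2+H^2]^{p-2}$. Equally important, the source term becomes $\int\eta^2\,\mathrm{tr}(A_\e D^2u_\e)\,f_\e$, which already carries one factor $[\e^2+H^2]^{(p-2)/2}$ from $A_\e$; after Young it splits cleanly into $\delta\int\eta^2[\e^2+H^2]^{p-2}\norm{D^2u_\e}^2 + C_\delta\int\eta^2 f_\e^2$, with no residual weight on $f_\e^2$. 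In short, the missing idea is the choice of test function: inserting the stress component $a_\e^k(\nabla u_\e)$ rather than $\partial_{x_k}u_\e$ is precisely what balances the exponents.
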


\begin{proof}

Since from Lemma \ref{lemma_reg_u_ep} we have that $a_\e(\nabla u_\e) \in H^1_{loc}(\Omega)$, we can differentiate the equation \eqref{eqn:ue} to obtain
\begin{equation}\label{eqn:ued}
-\mathrm{div}\left(\partial_{x_k}a_\e(\nabla u_\e)\right)=\partial_{x_k}f_\e \quad \text{in}  \quad  \mathcal D'( \Omega), \qquad k=1,...,n,
\end{equation}
%for $k=1,\ldots,n$. 
and so 
\begin{equation}\label{eqn:uedint}
\sum_{j=1}^n \int_{\Omega} \partial_{x_k}a^j_\e(\nabla u_\e) \partial_{x_j}\varphi = - \int_{\Omega} f_\e  \partial_{x_k}\varphi \qquad k=1,...,n,
\end{equation}
holds true for any $ \varphi \in H^1_c(\Omega)$, the set of compactly supported members of  $H^1(\Omega)$.
 
For any $\eta\in C^{0,1}_c(\Omega)$ and any $k=1,...,n$ we first choose $\vphi=\eta^2 a_\e^k(\nabla u_\e)\in H^1_c(\Omega)$ as test function in \eqref{eqn:uedint} and 
then we sum the obtained identities from $k=1$ to $n$ to obtain
%by the test function
%\[
%\vphi=\eta^2 a_\e^k(\nabla u_\e)\,,
%\]
%and then we integrate. By summing the obtained identities from $k=1$ to $n$, we infer that
\begin{equation}\label{eq1:1}
\begin{split}
0=&\int_{\Omega} \eta^2 \mathrm{tr}\left[\left(\nabla  a_\e(\nabla u_\e)\right)^2\right]\,dx+2\int_{\Omega}\eta \<\nabla a_\e(\nabla u_\e)\,a_\e(\nabla u_\e),\nabla \eta \> dx
\\
&+ \sum_{k=1}^n \int_{\Omega} \eta^2 \partial_{x_k}a_\e^k(\nabla u_\e)f_\e\,dx+2\int_{\Omega} \eta \,f_\e \<a_\e(\nabla u_\e), \nabla \eta\>\,dx=
\\
&=I_1+I_2+I_3+I_4.
\end{split}
\end{equation}
Thanks to the ellipticity condition \eqref{ell:fin}, we have that 
\begin{equation}\label{ell:A}
\l\abs{z}^2\leq \frac{\<A_\e(x)\,z,\,z\>}{[\e^2+H^2(\nabla u_\e(x))]^{\frac{p-2}{2}}}\leq \L\abs{z}^2\quad\forall z\in\R^n,\,\forall x\not\in Z_\e \,,
\end{equation}
where $\l$ and $\L$ depend only on $n,p, H$. 

We exploit the following basic algebra inequality: let $X$ be a symmetric matrix, and $Y$ positive semidefinite matrix; if $\l_{min}$ and $\l_{\max}$ denote the smallest and biggest eigenvalues of $X$, respectively, then
\begin{equation}\label{tr:ineq}
\l_{min}\mathrm{tr}(Y)\leq \mathrm{tr}(XY)=\mathrm{tr}(YX)\leq \l_{max}\mathrm{tr}(Y).
\end{equation}

Therefore, from \eqref{eq:chain}, \eqref{tr:ineq} and  \eqref{ell:A} we infer 

%by recalling that
%\[
%D^2 u_\e=0\quad\text{a.e. in }Z_\e, 
%\]
%we infer

\begin{equation}\label{eqn:I1}
\begin{split}
I_1&=\int_{\Omega\setminus Z_\e} \eta^2 \mathrm{tr}\left[A_\e\, D^2 u_\e\,A_\e\,D^2 u_\e\right]\,dx
\\
&\geq \l\int_{\Omega\setminus Z_\e} \eta^2[\e^2+H^2(\nabla u_\e)]^{\frac{p-2}{2}}\mathrm{tr}\left( D^2 u_\e\,A_\e\,D^2 u_\e \right)\,dx
\\
&=\l\int_{\Omega\setminus Z_\e} \eta^2[\e^2+H^2(\nabla u_\e)]^{\frac{p-2}{2}}\mathrm{tr}\left(A_\e\,D^2 u_\e \,D^2 u_\e\right)\,dx
\\
&\geq\l^2\int_{\Omega\setminus Z_\e} \eta^2[\e^2+H^2(\nabla u_\e)]^{p-2}\mathrm{tr}\left(D^2 u_\e \,D^2 u_\e\right)\,dx
\\
& = \l^2\int_{\Omega} \eta^2[\e^2+H^2(\nabla u_\e)]^{p-2}\norm{D^2 u_\e}^2 dx,
\end{split}
\end{equation}
where we used the symmety of $D^2 u_\e$ and the fact that a scalar product of two matrices $X$ and $Y$ can be defined by $X:Y=\mathrm{tr}(X\,Y^T)$, so that $\norm{X}^2=X:X$.

From \eqref{eq:chain}, \eqref{ell:A} and since 
$$
\vert a_\e(\xi) \vert \leq C(H)[\e^2+H^2(\xi)]^{\frac{p-2}{2}}H(\xi) \qquad \forall \, \xi \in \R^n, \quad \forall \e \in (0,1),
$$
where $C(H)$ is a constant depending only on $H$, we find that 
\begin{equation}\label{eqn:I2}
\begin{split}
&\vert I_2 \vert =  \Big \vert 2\int_{\Omega\setminus Z_\e}\eta \<A_\e\,D^2 u_\e\,a_\e(\nabla u_\e),\nabla \eta \> dx \Big \vert
\\
&\leq 2 \L\,C(H)\int_{\Omega} \eta\,[\e^2+H^2(\nabla u_\e)]^{p-2}H(\nabla u_\e)\abs{\nabla \eta}\,\norm{D^2 u_\e}\,dx
\\
&\leq \delta \int_{\Omega} \eta^2\,[\e^2+H^2(\nabla u_\e)]^{p-2}\norm{D^2 u_\e}^2\,dx
\\
&\quad+\frac{\L^2\,C(H)}{\delta} \int_{\Omega} [\e^2+H^2(\nabla u_\e)]^{p-2}H^2(\nabla u_\e)\abs{\nabla \eta}^2\,dx,
\end{split}
\end{equation}
where in the last inequality we applied the weighted Young inequality with a weight $\delta>0$ to be chosen later.

From \eqref{ell:A}, \eqref{tr:ineq}, Holder and Young inequalities, we obtain
\begin{equation}\label{eqn:I3}
\begin{split}
\vert I_3 \vert &=  \Big \vert \int_{\Omega\setminus Z_\e} \eta^2\,[A_\e:D^2 u_\e]\, f_\e\,dx  \Big \vert
\leq \int_{\Omega\setminus Z_\e} \eta^2\, \norm{A_\e} \norm{D^2 u_\e}\,\abs{f_\e}\,dx
\\
&\leq \sqrt{n} \L\int_{\Omega} \eta^2\,[\e^2+H^2(\nabla u_\e)]^{\frac{p-2}{2}}\norm{D^2 u_\e}\,\abs{f_\e}\,dx
\\
&\leq \delta\int_{\Omega} \eta^2\,[\e^2+H^2(\nabla u_\e)]^{p-2}\norm{D^2 u_\e}^2\,dx+\frac{n\L^2}{4\delta}\int_{\Omega}\eta^2 f_\e^2 dx.
\end{split}
\end{equation}

Finally, via Young's inequality,
\begin{equation}\label{eqn:I4}
\begin{split}
\vert I_4 \vert &\leq 2 C(H)\int_{\Omega}\abs{\eta}\left(\e^2+H^2(\nabla u_\e)\right)^{\frac{p-2}{2}}H(\nabla u_\e)\abs{f_\e}\abs{\nabla \eta}\,dx
\\
&\leq C(H) \int_{\Omega} [\e^2+H^2(\nabla u_\e)]^{p-2}H^2(\nabla u_\e)\abs{\nabla \eta}^2\,dx+\int_{\Omega}\eta^2 f_\e^2 dx.
\end{split}
\end{equation}

By combining \eqref{eqn:I1}-\eqref{eqn:I4} we get
\begin{equation*}
\begin{split}
&(\l^2 - 2\delta) \int_{\Omega} \eta^2[\e^2+H^2(\nabla u_\e)]^{p-2}\norm{D^2 u_\e}^2 dx
\\
& \leq C(H) \left(1+ \frac{\L^2}{\delta} \right) \int_{\Omega} [\e^2+H^2(\nabla u_\e)]^{p-2}H^2(\nabla u_\e)\abs{\nabla \eta}^2\,dx +  \left(1+ \frac{n\L^2}{4\delta} \right) \int_{\Omega}\eta^2 f_\e^2 dx
\end{split}
\end{equation*}
and, by choosing $\delta=\frac{\l^2}{4}$ in the latter,  we find

\begin{equation*}
\begin{split}
&\int_{\Omega} \eta^2[\e^2+H^2(\nabla u_\e)]^{p-2}\norm{D^2 u_\e}^2 dx
\\
& \leq C(H) \frac{2}{\l^2} \left(1+ \frac{4 \L^2}{\l^2} \right) \int_{\Omega} [\e^2+H^2(\nabla u_\e)]^{p-2}H^2(\nabla u_\e)\abs{\nabla \eta}^2\,dx +  \frac{2}{\l^2} \left(1+ \frac{n\L^2}{\l^2} \right) \int_{\Omega}\eta^2 f_\e^2 dx
\end{split}
\end{equation*}
which completes the proof.
\end{proof}

The following corollary is a consequence of Proposition \ref{prop_int_bound}. It will be crucial in the proof of Theorem \ref{thm_main1}. 

\begin{corollary}\label{cor:quasi-stima-finale} Let $u_\e$ be a solution of \eqref{eqn:ue}. Then %there exists a constant $C_2=C_2(n,p,H)$ such that, 
for any function $\eta\in C_c^{0,1}(\Omega)$ and for any $\e\in (0,1)$, we have 
\begin{equation}\label{eqn:ut4-pippo}
\int_{\Omega} \eta^2[\e^2+H^2(\nabla u_\e)]^{p-2}\norm{D^2 u_\e}^2 dx \leq
C_2\int_{\Omega}\abs{a_\e(\nabla u_\e)}^2\abs{\nabla \eta}^2 dx+C_2 \int_{\Omega}\eta^2 f_\e^2 dx
\end{equation}
and
\begin{equation}\label{eqn:ut4}
\int_{\Omega}\eta^2 \norm{\nabla a_\e(\nabla u_\e)}^2\leq C_2\int_{\Omega}\abs{a_\e(\nabla u_\e)}^2\abs{\nabla \eta}^2 dx+C_2 \int_{\Omega}\eta^2 f_\e^2 dx \,,
\end{equation}
where $C_2$ is a constant depending only on $n,p$ and $H$.	
\end{corollary}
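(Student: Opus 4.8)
The plan is to deduce both inequalities directly from Proposition \ref{prop_int_bound}, the point being that the weight $[\e^2+H^2(\nabla u_\e)]^{p-2}H^2(\nabla u_\e)$ on the right-hand side of \eqref{eqn:ut1} is comparable to $\abs{a_\e(\nabla u_\e)}^2$. Granting this, \eqref{eqn:ut4-pippo} is immediate, and \eqref{eqn:ut4} follows once $\norm{\nabla a_\e(\nabla u_\e)}^2$ is bounded pointwise by the Hessian term already under control.

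First I would record, from \eqref{def-a-epsilon}, the pointwise identity
\[
\abs{a_\e(\xi)}^2=[\e^2+H^2(\xi)]^{p-2}\,H^2(\xi)\,\abs{\nabla_\xi H(\xi)}^2\qquad\text{for }\xi\neq 0,
\]
both sides vanishing at $\xi=0$. Then I would observe that $\abs{\nabla_\xi H}$ is bounded below by a positive constant $c_H$ depending only on $H$: indeed $\nabla_\xi H$ is positively $0$-homogeneous and continuous on $\R^n\setminus\{0\}$, and \eqref{prop-H_0} gives $H_0(\nabla_\xi H(\xi))=1$, whence $\abs{\nabla_\xi H(\xi)}\geq 1/\gamma$ for any $\gamma>0$ with $H_0\leq\gamma\abs{\cdot}$ on $\R^n$. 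Consequently
\[
[\e^2+H^2(\xi)]^{p-2}H^2(\xi)\leq c_H^{-2}\,\abs{a_\e(\xi)}^2\qquad\text{for all }\xi\in\R^n ,
\]
and inserting $\xi=\nabla u_\e$ into \eqref{eqn:ut1} yields \eqref{eqn:ut4-pippo} with $C_2:=C_1\max\{c_H^{-2},1\}$, depending only on $n,p,H$.

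To get \eqref{eqn:ut4} I would invoke the chain rule \eqref{eq:chain}: a.e.\ on $Z_\e^c$ one has $\nabla a_\e(\nabla u_\e)=A_\e\,D^2u_\e$, and $\nabla a_\e(\nabla u_\e)=0$ a.e.\ on $Z_\e$. Since $A_\e$ is symmetric, the upper bound in \eqref{ell:A} forces its spectral norm to be at most $\L\,[\e^2+H^2(\nabla u_\e)]^{\frac{p-2}{2}}$, so that $\norm{A_\e\,D^2u_\e}\leq\L\,[\e^2+H^2(\nabla u_\e)]^{\frac{p-2}{2}}\norm{D^2u_\e}$; hence
\[
\norm{\nabla a_\e(\nabla u_\e)}^2\leq \L^2\,[\e^2+H^2(\nabla u_\e)]^{p-2}\norm{D^2u_\e}^2\qquad\text{a.e.\ in }\Omega .
\]
Multiplying by $\eta^2$, integrating over $\Omega$ and applying \eqref{eqn:ut4-pippo} gives \eqref{eqn:ut4}, after enlarging $C_2$ by the factor $\L^2$ (still depending only on $n,p,H$).

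The only step needing a little care is the uniform lower bound $\abs{\nabla_\xi H}\geq c_H>0$, but as indicated this is an immediate consequence of \eqref{prop-H_0} together with the equivalence of all norms on $\R^n$; the rest is bookkeeping with the structural constants $\l,\L$ and $C_1$ already produced in Section \ref{sect_unif_bounds}.
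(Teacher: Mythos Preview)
Your proposal is correct and follows essentially the same route as the paper: the paper also derives the lower bound $\abs{a_\e(\xi)}\geq c(H)[\e^2+H^2(\xi)]^{\frac{p-2}{2}}H(\xi)$ from the identity $H_0(\nabla_\xi H(\xi))=1$ together with the equivalence of norms (you phrase this as a lower bound on $\abs{\nabla_\xi H}$, which is equivalent), and then obtains \eqref{eqn:ut4} from \eqref{eqn:ut4-pippo} via the chain rule \eqref{eq:chain} and the upper ellipticity bound \eqref{ell:A}, exactly as you do.
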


\begin{proof} We notice that 
\begin{equation}\label{est:lowerae}
\abs{a_\e(\nabla u_\e)}\geq c(H)\left[\e^2+H^2(\nabla u_\e) \right]^{\frac{p-2}{2}}H(\nabla u_\e).
\end{equation}
Indeed, recalling that the dual norm of $H$ satisfies \eqref{prop-H_0}, we have for any $ \xi \not =0$ 
%if we denote by $H_0$ the dual norm of $H$, then, since $H_0(\nabla_\xi H)=1$,
\begin{equation*}
\begin{split}
H_0(a_\e(\xi))&=H_0\left(\left[\e^2+H^2(\xi)  \right]^{\frac{p-2}{2}}H(\xi)\nabla_\xi H(\xi)\right) \\
&=\left(\left[\e^2+H^2(\xi)  \right]^{\frac{p-2}{2}}H(\xi) \right) H_0(\nabla_\xi H(\xi)) = \left[\e^2+H^2(\xi)  \right]^{\frac{p-2}{2}}H(\xi),
\end{split}
\end{equation*}
thus \eqref{est:lowerae} follows from the equivalence of norms on $\R^n$ and \eqref{eqn:ut4-pippo} follows from \eqref{eqn:ut1}. 

Also, by \eqref{eq:chain} and \eqref{ell:A}, we have that
\begin{equation}\label{stima-grad-a}
\norm{\nabla a_\e(\nabla u_\e)}=\norm{A_\e\,D^2 u_\e}\leq C(n,p,H)[\e^2+H^2(\nabla u_\e)]^{\frac{p-2}{2}}\norm{D^2 u_\e} \quad \text{a.e. on } \Omega,
\end{equation}
therefore \eqref{eqn:ut4} follows immediately from \eqref{eqn:ut4-pippo}.
%by making use of \eqref{est:lowerae} and \eqref{stima-grad-a} in \eqref{eqn:ut1} we immediately get \eqref{eqn:ut4}.
\end{proof}

\bigskip

Now we estimate the term $\int_{B_R} \abs{a_\e(\nabla u_\e)}^2 dx$, where $B_R$ is any open ball such that $\overline{B_{2R}} \subset \Omega.$ More precisely we have the following result.

\bigskip

\begin{proposition} \label{prop-quasi-stima-finale}
Let $u_\e$ be a solution of \eqref{eqn:ue}. Then, for any $\e\in (0,1)$ and for any open ball %$B_{2R}$ such that 
$B_{2R} \subset  \subset \Omega$ we have 

\begin{equation}\label{eqn:ut1-bis}
\int_{B_R} \vert a_\e(\nabla u_\e) \vert^{2} dx \leq C_3 \Bigl[ R^{-n} \left( \int_{B_{2R}\setminus B_R}\vert a_\e(\nabla u_\e) \vert dx \right)^2+ R^2\int_{B_{2R}} f_\e^2 dx \Bigr]
\end{equation}
\begin{equation}\label{eqn:ut1-tris}
\int_{B_\frac{R}{2}} \Vert \nabla a_\e(\nabla u_\e) \Vert^{2} dx \leq C_4 \Bigl[ R^{-n-2} \left( \int_{B_{2R}\setminus B_R}\vert a_\e(\nabla u_\e) \vert dx \right)^2+ \int_{B_{2R}} f_\e^2 dx \Bigr]
\end{equation}
where $C_3,C_4$ are  constants depending only on $n,p$ and $H$. 
\end{proposition}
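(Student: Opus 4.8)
The plan is to derive both estimates from Corollary \ref{cor:quasi-stima-finale} by making a clever choice of the cutoff function $\eta$, combined with a Caccioppoli-type absorption argument. First I would fix the ball $B_{2R}\subset\subset\Omega$ and, to prove \eqref{eqn:ut1-bis}, apply \eqref{eqn:ut4} with a cutoff $\eta\in C_c^{0,1}(\Omega)$ satisfying $\eta\equiv 1$ on $B_R$, $\supp\eta\subset B_{2R}$, $0\le\eta\le 1$ and $|\nabla\eta|\le C/R$. This gives control of $\int_{B_R}\norm{\nabla a_\e(\nabla u_\e)}^2$ by $R^{-2}\int_{B_{2R}}|a_\e(\nabla u_\e)|^2 + \int_{B_{2R}}\eta^2 f_\e^2$, but the right-hand side still contains the full $L^2$-norm of $a_\e(\nabla u_\e)$ on $B_{2R}$ rather than just on the annulus $B_{2R}\setminus B_R$. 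The standard way around this is a \emph{hole-filling / Sobolev-Poincaré} trick: since $\eta\,a_\e(\nabla u_\e)\in H^1_0(B_{2R})$ componentwise, apply the Sobolev-Poincaré (or Gagliardo-Nirenberg) inequality on $B_{2R}$ to estimate $\norm{\eta\,a_\e(\nabla u_\e)}_{L^2}$ — or rather, I would use a Poincaré inequality relative to the annulus. More precisely, write $v_\e := a_\e(\nabla u_\e)$; on the ball $B_{2R}$ one has the Poincaré-type bound $\norm{v_\e - \bar v_\e}_{L^2(B_{2R})}\le C R\norm{\nabla v_\e}_{L^2(B_{2R})}$ where $\bar v_\e$ is the average of $v_\e$ over the annulus $B_{2R}\setminus B_R$, together with $\abs{\bar v_\e}\le C R^{-n}\int_{B_{2R}\setminus B_R}|v_\e|$ and hence $\norm{\bar v_\e}_{L^2(B_{2R})}\le C R^{-n/2}\int_{B_{2R}\setminus B_R}|v_\e|$. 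Substituting this into the Caccioppoli estimate and absorbing the term $CR^{-2}\cdot C R^2\norm{\nabla v_\e}_{L^2(B_{2R})}^2$ — after first localizing on a slightly larger ball so the absorption is legitimate, or iterating — yields \eqref{eqn:ut1-bis} after renaming radii; the factor $R^2$ in front of $\int f_\e^2$ and $R^{-n}$ in front of the squared $L^1$-norm come out exactly from this scaling bookkeeping.

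Once \eqref{eqn:ut1-bis} is established, estimate \eqref{eqn:ut1-tris} follows by applying \eqref{eqn:ut4} once more, now with a cutoff $\eta$ that equals $1$ on $B_{R/2}$ and is supported in $B_R$ with $|\nabla\eta|\le C/R$: this gives
\[
\int_{B_{R/2}}\norm{\nabla a_\e(\nabla u_\e)}^2 \le C_2 R^{-2}\int_{B_R}|a_\e(\nabla u_\e)|^2 + C_2\int_{B_R}f_\e^2,
\]
and then one feeds in the bound \eqref{eqn:ut1-bis} for $\int_{B_R}|a_\e(\nabla u_\e)|^2$ (applied with $R$ in place of $R$, i.e. comparing $B_R$ to $B_{2R}$). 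The $R^{-2}$ multiplied against $R^{-n}(\int_{B_{2R}\setminus B_R}|v_\e|)^2$ produces the $R^{-n-2}$ weight, while $R^{-2}\cdot R^2\int f_\e^2 = \int f_\e^2$ gives the clean source term, consistent with the stated form of \eqref{eqn:ut1-tris}.

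The main obstacle is the absorption step: \eqref{eqn:ut4} controls $\int\eta^2\norm{\nabla v_\e}^2$ in terms of $\int|v_\e|^2|\nabla\eta|^2$, and to convert the latter (over $B_{2R}$) into something supported only on the annulus one must pass through a Poincaré inequality whose constant then reintroduces $\int_{B_{2R}}\norm{\nabla v_\e}^2$; making sure this reintroduced term can be absorbed into the left-hand side requires either a careful choice of nested balls and a covering/iteration argument (à la the standard proof that local boundedness of the Caccioppoli quantity self-improves) or an explicit tracking of constants so that the Poincaré-constant contribution is strictly less than the coefficient on the left. A secondary technical point is that $v_\e=a_\e(\nabla u_\e)$ is only $H^1_{loc}$, so all integrations by parts and the Poincaré inequality must be justified on the level of the approximating solutions $u_\e$, which is fine by Lemma \ref{lemma_reg_u_ep}. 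Everything else — the choice of cutoffs, the weighted Young inequalities, the scaling of the constants with $R$ — is routine once the hole-filling mechanism is set up.
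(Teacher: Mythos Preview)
Your derivation of \eqref{eqn:ut1-tris} from \eqref{eqn:ut1-bis} is correct and matches the paper exactly. The problem lies in your argument for \eqref{eqn:ut1-bis}: the Poincar\'e--Caccioppoli loop you propose does not close. Writing $v_\e=a_\e(\nabla u_\e)$, the Caccioppoli estimate \eqref{eqn:ut4} with a cutoff between $B_t$ and $B_s$ ($R\le t<s\le 2R$) gives
\[
\int_{B_t}\|\nabla v_\e\|^2 \le \frac{C}{(s-t)^2}\int_{B_s\setminus B_t}|v_\e|^2 + C\int_{B_{2R}}f_\e^2,
\]
while the Poincar\'e inequality on $B_s$ (with $\bar v_\e$ the average over the annulus) gives $\int_{B_s}|v_\e-\bar v_\e|^2\le C R^2\int_{B_s}\|\nabla v_\e\|^2$. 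Feeding the second into the first produces a term $C\,R^2(s-t)^{-2}\int_{B_s}\|\nabla v_\e\|^2$ on the right-hand side, and since $s-t\le R$ the coefficient $C R^2(s-t)^{-2}\ge C$ is bounded \emph{below}, never strictly less than $1$; so neither hole-filling nor Giusti's iteration lemma can absorb it. The obstacle you flag is real, and the ``careful choice of nested balls'' you allude to does not resolve it: without a gain in integrability the loop has no slack.

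The paper fixes this by inserting a Sobolev step. One applies $H^1_0\hookrightarrow L^{2^*}$ (for $n\ge 3$; for $n=2$ one uses $H^1_0\hookrightarrow L^\theta$ for a fixed $\theta>2$) to $\eta v_\e$ and combines it with \eqref{eqn:ut4} to obtain
\[
\int_{B_t}|v_\e|^{2^*}\le \frac{C}{(s-t)^{2^*}}\Bigl(\int_{B_s\setminus B_R}|v_\e|^2\Bigr)^{2^*/2}+C\Bigl(\int_{B_{2R}}f_\e^2\Bigr)^{2^*/2}.
\]
The $L^2$ norm on the right is then interpolated via H\"older between $L^{2^*}$ and $L^{1}$ (taking $\sigma=\tfrac12$), and weighted Young's inequality splits off the $L^{2^*}$ piece with coefficient $\tfrac12$. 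Now Giusti's iteration lemma applies to $Z(t)=\int_{B_t}|v_\e|^{2^*}$ and yields the $L^{2^*}$ bound on $B_R$; a final H\"older step converts it to the $L^2$ bound \eqref{eqn:ut1-bis} with the claimed $R^{-n}$ and $R^2$ scalings. The Sobolev gain to exponent $2^*>2$ is precisely what creates room for the absorption and is the missing ingredient in your plan.
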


\begin{proof}
Thanks to Lemma \ref{lemma_reg_u_ep} we have $\eta a_\e^k(\nabla u_\e) \in H^1_c(\Omega)$ for any $ k=1,..., n$ and for $\eta \in C^{0,1}_c(\Omega)$ whose support is contained in $\overline{B_{2R}} \subset \Omega$.

We first consider the case $n\geq 3$. 
%The Case $n=2$  can be dealt in a similar way (see the end of this proof).

\textit{Case $n\geq 3$.} Since
\begin{equation}\label{e:stima1-n3}
\begin{split}
&\int_{\Omega} \vert \eta\,a_\e(\nabla u_\e) \vert^{2^*}dx = \int_{\Omega} \left( \vert \eta\,a_\e(\nabla u_\e) \vert^2\right)^{\frac{2^*}{2}}dx 
%= \int_{\Omega} \left( \sum_{k=1}^{n} \vert \eta\,a_\e^k(\nabla u_\e) \vert^2\right)^{\frac{2^*}{2}}dx
\\
&= \int_{\Omega} \left( \sum_{k=1}^{n} \vert \eta\,a_\e^k(\nabla u_\e) \vert^2\right)^{\frac{2^*}{2}}dx \leq C(n) \int_{\Omega} \sum_{k=1}^{n} \vert \eta\,a_\e^k(\nabla u_\e) \vert^{2^*} dx \,,
\end{split}
\end{equation}
then the Sobolev embedding $H^1(\Omega)\hookrightarrow L^{2^*}(\Omega)$ 
yields
\begin{equation}\label{e:sob1}
\begin{split}
&\int_{\Omega} \vert \eta\,a_\e(\nabla u_\e) \vert^{2^*}dx \leq C'(n) \left[ \sum_{k=1}^{n} \left(  \int_{\Omega} \vert \nabla(\eta\,a_\e^k(\nabla u_\e)) \vert^{2} dx\right)^{\frac{2^*}{2}} \right]
\\
& \leq C''(n) \sum_{k=1}^{n} \left[ \int_{\Omega} \left( \eta^2  \vert \nabla a_\e^k(\nabla u_\e)) \vert^{2} + \vert a_\e^k(\nabla u_\e)) \vert^2 \vert \nabla \eta  \vert^2 \right) dx \right]^{\frac{2^*}{2}}
\\
& \leq nC''(n)\left[ \int_{\Omega} \left( \eta^2  \Vert \nabla a_\e(\nabla u_\e)) \Vert^{2} + \vert a_\e(\nabla u_\e)) \vert^2 \vert \nabla \eta  \vert^2  \right) dx\right]^{\frac{2^*}{2}}  \,.
\end{split}
\end{equation}
Now we use \eqref{eqn:ut4} in the latter to infer
\begin{equation}\label{e:stima2-n3}
\begin{split}
& \int_{\Omega} \vert \eta\,a_\e(\nabla u_\e) \vert^{2^*}dx \leq 
nC''(n) \left[ (C_2+1) \int_{\Omega}\abs{a_\e(\nabla u_\e)}^2\abs{\nabla \eta}^2 dx+C_2 \int_{\Omega}\eta^2 f_\e^2  dx\right]^{\frac{2^*}{2}}
\\
& \leq C(n,p,H) \left[ \int_{\Omega} \abs{a_\e(\nabla u_\e)}^2\abs{\nabla \eta}^2 dx+ \int_{\Omega} \eta^2 f_\e^2 dx\right]^{\frac{2^*}{2}}
\\
& \leq C'(n,p,H) \left[  \left( \int_{\Omega} \abs{a_\e(\nabla u_\e)}^2\abs{\nabla \eta}^2 \right)^{\frac{2^*}{2}} dx+ \left(\int_{\Omega} \eta^2 f_\e^2 dx  \right)^{\frac{2^*}{2}}\right] \,.
\end{split}
\end{equation}
Let $R \leq t<s\leq 2R$ and let $\eta=\eta_{t,s}\in C^{0,1}_c( \Omega)$ be a cut off function with $0\leq \eta\leq 1$ and such that
\begin{equation}\label{test-function-2R}
\eta\equiv 1\quad\text{on }B_t,\quad \eta=0 \quad\text{on } \Omega \setminus B_s, \quad \abs{\nabla \eta}\leq\frac{1}{s-t} \quad \text{on} \quad \Omega.
\end{equation}
Then from \eqref{e:stima2-n3} we have
\begin{equation}\label{e:sob2}
\int_{B_t} \vert a_\e(\nabla u_\e) \vert^{2^*} dx \leq C''(n,p,H) \left[\frac{1}{(s-t)^{2^*}} \left( \int_{B_s \setminus B_R} \vert a_\e(\nabla u_\e) \vert^{2} dx\right)^{\frac{2^*}{2}} dx + \left(\int_{B_{2R}} f_\e^2 dx \right)^{\frac{2^*}{2}}  \right].
\end{equation}

Following \cite[Remark 6.12]{giu}, let $r=2^*/2>1$ and consider $\sigma \in(0,1)$. Let 
\begin{equation*}
\a=\left(\frac{1-\sigma}{r-\sigma}\right)r \in(0,1)
\end{equation*}
so that
$$
\frac{r}{\a}=\frac{r-\sigma}{1-\sigma}>1,\quad \left(\frac{r}{\a} \right)'=\frac{r-\sigma}{r-1}\quad\text{and} \quad (1-\a)\left(\frac{r}{\a} \right)'=\sigma.
$$
By Holder's inequality we have
\begin{equation}
\begin{split} 
& \int_{B_s \setminus B_R}\vert a_\e(\nabla u_\e)\vert^{2}dx =\int_{B_s \setminus B_R} \vert a_\e(\nabla u_\e) \vert^{2\a} \,\vert a_\e(\nabla u_\e) \vert^{2(1-\a)} dx 
\\
&\leq 
\left( \int_{B_s \setminus B_R} |a_\e(\nabla u_\e) \vert^{2r} dx \right)^{\frac{1-\sigma}{r-\sigma}} 
\left( \int_{B_s \setminus B_R} |a_\e(\nabla u_\e) \vert^{2\sigma} dx \right)^{\frac{r-1}{r-\sigma}} \,.
%\\
%&=\left( \int_{B_s}w_\e^{(p-1)\frac{2^*}{2}}dx \right)^{\frac{1-\sigma}{r-\sigma}}\left( \int_{B_s}w_\e^{(p-1)\sigma}dx \right)^{\frac{r-1}{r-\sigma}}
%\\
\end{split}
\end{equation}
Thus, since $-2^*= -2n/(n-2)=n(1-r)$, from \eqref{e:sob2} and the latter we obtain
\begin{equation}\label{e:stima3-n3}
\begin{split}
& \int_{B_t} \vert a_\e(\nabla u_\e) \vert^{2^*} dx %\leq C''(n,p,H)
 \\
 &
\leq C''(n,p,H) (s-t)^{n(1-r)} \left(\int_{B_s \setminus B_R} | a_\e(\nabla u_\e) \vert^{2r} dx \right)^{r\left(\frac{1-\sigma}{r-\sigma}\right)} \left(\int_{B_s \setminus B_R} | a_\e(\nabla u_\e) \vert^{2\sigma} dx \right)^{r\left(\frac{r-1}{r-\sigma}\right)} 
\\
& + C''(n,p,H)  \left( \int_{B_{2R}} f_\e^2 dx \right)^{r} 
\\
& \leq \left(\int_{B_s} | a_\e(\nabla u_\e) \vert^{2r} dx \right)^{r\left(\frac{1-\sigma}{r-\sigma}\right)} 
\Bigl[ C''(n,p,H) (s-t)^{n(1-r)} \left(\int_{B_s \setminus B_R} | a_\e(\nabla u_\e) \vert^{2\sigma} dx \right)^{r\left(\frac{r-1}{r-\sigma}\right)} \Bigr]
\\
& + C''(n,p,H)  \left( \int_{B_{2R}} f_\e^2 dx \right)^{r} 
\end{split}
\end{equation}
and therefore, via weighted Young's inequality with conjugate exponents $\frac{r-\sigma}{r(1-\sigma)}$ and $\frac{r-\sigma}{\sigma(r-1)}$,
we obtain
\begin{equation*}
\begin{split}
 \int_{B_t}  \vert  a_\e(\nabla u_\e) \vert^{2^*} dx  & \leq \frac{1}{2}\int_{B_s}\vert a_\e(\nabla u_\e) \vert^{2r} dx + \tilde{C}(s-t)^{-(r-\sigma)\frac{n}{\sigma}}\left( \int_{B_s \setminus B_R}\vert a_\e(\nabla u_\e) \vert^{2\sigma} dx \right)^{\frac{r}{\sigma}} \\ & \hspace{4em} +C''(n,p,H)\left( \int_{B_{2R}} f_\e^2 dx \right)^{r} \\
 & \leq \tilde{C}(s-t)^{-(r-\sigma)\frac{n}{\sigma}}\left( \int_{B_{2R} \setminus B_R}\vert a_\e(\nabla u_\e) \vert^{2\sigma} dx \right)^{\frac{r}{\sigma}} 
+C''(n,p,H) \left( \int_{B_{2R}} f_\e^2 dx \right)^{r}  \\ & \hspace{4em}  + \frac{1}{2}\int_{B_s}\vert a_\e(\nabla u_\e) \vert^{2^*} dx
\end{split}
\end{equation*}
where $ \tilde{C}$ is a constant depending only on $n, p, \sigma$ and $H$. 

By applying \cite[Lemma 6.1]{giu} with
\[
Z(t)=\int_{B_t}\vert a_\e(\nabla u_\e) \vert^{2^*} dx,
\]
and by choosing $\sigma=\frac{1}{2}$, from the above inequality we obtain 

\begin{equation} \label{aboveineq-bis}
\int_{B_{R}}\vert a_\e(\nabla u_\e) \vert^{2^*} dx \leq C'''\, R^{-(r-\sigma)\frac{n}{\sigma}}\left( \int_{B_{2R} \setminus B_R} \vert a_\e(\nabla u_\e) \vert dx \right)^{2r}+C'''\left( \int_{B_{2R}} f_\e^2 dx \right)^{r} 
\end{equation} 
where $ C'''$ is a constant depending only on $n, p, H$. 

Then Holder inequality and \eqref{aboveineq-bis} imply
\begin{equation}\label{e:stima4-n3}
\begin{split}
\int_{B_R} \vert a_\e(\nabla u_\e) \vert^{2} dx \leq C'''_1 \abs{B_R}^{2/n} \Bigl[ R^{-(r-\sigma)\frac{n-2}{\sigma}}\left( \int_{B_{2R}\setminus B_R}\vert a_\e(\nabla u_\e) \vert dx \right)^2+\int_{B_{2R}} f_\e^2 dx \Bigr]
\end{split}
\end{equation}
where $C'''_1 $ is a constant depending only on $n, p, H$. 

A short computation yields $(r-\sigma)\frac{n-2}{\sigma} = n+2$, therefore the latter gives
\begin{equation} \label{above2}
\begin{split}
\int_{B_R} \vert a_\e(\nabla u_\e) \vert^{2} dx \leq C'''_2 R^2 \Bigl[ R^{-(n+2)} \left( \int_{B_{2R}\setminus B_R}\vert a_\e(\nabla u_\e) \vert dx \right)^2+\int_{B_{2R}} f_\e^2 dx \Bigr]
\end{split}
\end{equation}
where $C'''_2 $ is a constant depending only on $n, p, H$. This proves \eqref{eqn:ut1-bis}.

To prove \eqref{eqn:ut1-tris} we make use of \eqref{eqn:ut4} by letting $\eta\in C^{0,1}_c(\Omega)$ be a cut-off function with $0\leq \eta\leq 1$ and such that
\[
\eta\equiv 1\quad\text{in  }B_{R/2}, \quad \eta=0 \quad\text{on } \Omega \setminus B_R, \quad \abs{\nabla \eta} \leq 2/R \quad\text{on } \Omega, 
\]
which leads to 
\begin{equation}\label{eqn:ut4-bis}
\int_{B_\frac{R}{2}} \norm{\nabla a_\e(\nabla u_\e)}^2 \leq 4C_2 R^{-2}  \int_{B_R}\abs{a_\e(\nabla u_\e)}^2 dx+ C_2 \int_{B_R} f_\e^2 dx
\end{equation}
Inserting \eqref{above2} into the latter %into \eqref{eqn:ut4} 
yields \eqref{eqn:ut1-tris}.

\medskip

\textit{Case $n=2$.} In this case we observe that, for any $\theta > 2$, it holds
\begin{equation}\label{sobolev2D}
\int_{\Omega} \vert \eta a_\e^k(\nabla u_\e)\vert^\theta \leq C(\theta)R^2 
\left( \int_{\Omega} \nabla \vert (\eta a_\e^k(\nabla u_\e) \vert^2 \right)^{\frac{\theta}{2}} \,.
\end{equation}
Here we have used that $\eta a_\e^k(\nabla u_\e) \in H^{1}_c(\Omega)$ and its support is contained in $\overline{B_{2R}} \subset \Omega$ (see for instance \cite[Theorem 12.33]{leo}). Now we repeat the previous computations with any $\theta>2$ fixed. This leads to  \eqref{e:sob2} with $ 2^*$ replaced by $ \theta$ and $C''(n,p,H)$ replaced by $C''(n,p,H,\theta)R^2$, i.e., 
\begin{equation}\label{quasi-giusti-2}
\int_{B_t} \vert a_\e(\nabla u_\e) \vert^{\theta} dx \leq C''(n,p,H,\theta) R^2 \left[\frac{1}{(s-t)^{\theta}} \left( \int_{B_s} \vert a_\e(\nabla u_\e) \vert^{2} dx\right)^{\frac{\theta}{2}} dx + \left(\int_{B_{2R}} f_\e^2 dx \right)^{\frac{\theta}{2}} dx \right].
\end{equation}

Now we choose $r = \frac{\theta}{2} >1$ and we repeat the computations after formula \eqref{e:sob2}. This leads to 
\begin{equation}\label{e:stima3-n2}
\begin{split}
& \int_{B_t} \vert a_\e(\nabla u_\e) \vert^{\theta} dx %\leq C''(n,p,H)
\\
&
\leq C''(n,p,H,\theta) R^2 (s-t)^{-2r} \left(\int_{B_s \setminus B_R} a_\e(\nabla u_\e) \vert^{2r} dx \right)^{r\left(\frac{1-\sigma}{r-\sigma}\right)} \left(\int_{B_s \setminus B_R}a_\e(\nabla u_\e) \vert^{2\sigma} dx \right)^{r\left(\frac{r-1}{r-\sigma}\right)} 
\\
& + C''(n,p,H,\theta) R^2 \left( \int_{B_{2R}} f_\e^2 dx \right)^{r} 
\leq \tilde{C}R^{\frac{2(r-\sigma)}{\sigma(r-1)}} (s-t)^{-\frac{2r(r-\sigma)}{\sigma(r-1)}}\left( \int_{B_{2R} \setminus B_R}\vert a_\e(\nabla u_\e) \vert^{2\sigma} dx \right)^{\frac{r}{\sigma}} 
\\
&+C''(n,p,H,\theta) R^2 \left( \int_{B_{2R}} f_\e^2 dx \right)^{r} + \frac{1}{2}\int_{B_s}\vert a_\e(\nabla u_\e) \vert^{\theta} dx
%\\
%& \leq \left(\int_{B_s} a_\e(\nabla u_\e) \vert^{2r} dx %\right)^{r\left(\frac{1-\sigma}{r-\sigma}\right)} 
%\Bigl[ C''(n,p,H) R^2 (s-t)^{n(1-r)} \left(\int_{B_s \setminus B_R}a_\e(\nabla u_\e) %\vert^{2\sigma} dx \right)^{r\left(\frac{r-1}{r-\sigma}\right)} \Bigr]
%\\
%& + C''(n,p,H)  \left( \int_{B_{2R}} f_\e^2 dx \right)^{r} 
\end{split}
\end{equation}
where $ \tilde{C}$ is a constant depending only on $n, p, \sigma, H$ and $\theta$. 

By by choosing $\sigma=\frac{1}{2}$ and applying \cite[Lemma 6.1]{giu}
%with
%\[
%Z(t)=\int_{B_t}\vert a_\e(\nabla u_\e) \vert^{2^*} dx,
%\]
%and by choosing $\sigma=\frac{1}{2}$, from the above inequality 
we obtain 

\begin{equation} \label{aboveineq-bis-n2}
\begin{split}
&\int_{B_{R}}\vert a_\e(\nabla u_\e) \vert^{\theta} dx \leq C'''\,
R^{\frac{2(r-\sigma)}{\sigma(r-1)}} R^{-\frac{2r(r-\sigma)}{\sigma(r-1)}}\left( \int_{B_{2R} \setminus B_R}\vert a_\e(\nabla u_\e) \vert^{2\sigma} dx \right)^{\frac{r}{\sigma}} + C''' R^2\left( \int_{B_{2R}} f_\e^2 dx \right)^{r} 
\\
&= C'''\, R^{-2(\theta -1)} \left( \int_{B_{2R} \setminus B_R}\vert a_\e(\nabla u_\e) \vert dx \right)^{\theta} + C''' R^2\left( \int_{B_{2R}} f_\e^2 dx \right)^{\frac{\theta}{2}}
\end{split}
\end{equation} 
where $ C'''$ is a constant depending only on $n, p, H$ and $\theta$.

Then Holder inequality and \eqref{aboveineq-bis-n2} imply
\begin{equation}\label{e:stima4-n3}
\begin{split}
\int_{B_R} \vert a_\e(\nabla u_\e) \vert^{2} dx \leq C'''_1 \Bigl[ R^{-2} \left( \int_{B_{2R}\setminus B_R}\vert a_\e(\nabla u_\e) \vert dx \right)^2 + R^2 \int_{B_{2R}} f_\e^2 dx \Bigr]
\end{split}
\end{equation}
where $C'''_1 $ is a constant depending only on $n, p, H$ and $\theta$. Then \eqref{eqn:ut1-bis} follows by fixing a value of $ \theta>2$. From the latter it is immediate to infer \eqref{eqn:ut1-tris}.

%%%%%%%%%%%%%%%%%%%%%%%%%%%%%%%

\end{proof}

%%%%%%%%%%%%%%%%%%%%%%
%%%%%%%%%%%%%%%%%%%%%%
%%%%%%%%%%%%%%%%%%%%%%

\section {Proof of the main results} \label{sect_proofs}
In this section we prove the main results of this paper. 

\begin{proof}[Proof of Theorem \ref{thm_main1}]
It suffices to apply the estimates we found in the previous sections for the approximating sequence $u_\e$, and then pass to the limit as $\e\to0$.

Let us fix $ \Omega' \subset\subset\Omega$ and consider $u_\e$ solutions to \eqref{eqn:ue}.
From \eqref{def-a-epsilon} we have 
$$
\abs{a_\e(\nabla u_\e)}\leq C(H) \left(\e^2+H^2(\nabla u_\e)\right)^{\frac{p-1}{2}} \,,
$$
and therefore \eqref{eqn:stime1} yields
\[
\norm{a_\e(\nabla u_\e)}_{L^1({\Omega'})}\leq C \,,
\]
where $C$ does not depend on $\e$. Then from Proposition \eqref{prop-quasi-stima-finale} and a standard covering argument we infer that 
%by \eqref{eq:chain} and \eqref{ell:A}, we have that
%\begin{equation}
%\abs{\nabla_x a_\e(\nabla u_\e)}=\abs{A_\e\,D^2 u_\e}\leq C[\e^2+H^2(\nabla u_\e)]^{\frac{p-2}{2}}\norm{D^2 u_\e};
%\end{equation}
%from \eqref{res1}, we obtain
\begin{equation}
\norm{\nabla a_\e(\nabla u_\e)}_{H^1(\Omega')}\leq C,
\end{equation}
where $C$ does not depend on $\e$. 

Since those estimates are uniform in $\e$, we can extract a subsequence, relabelled as $u_\e$, such that
\begin{equation}\label{conv-debole-puntuale}
\begin{split}
a_\e(\nabla u_\e)\to {h}\quad \text{weakly in } H^1_{loc}(\Omega), \quad  \text{strongly in } L^2_{loc}(\Omega)\quad \text{and }  \quad \text{a.e. in} \quad \Omega,
\end{split}
\end{equation}
for some $h \in H^1_{loc}(\Omega)$.

From the $L^p$ convergence $\nabla u_\e\to \nabla u$, we have (up to a subsequence, still denoted by $u_\e$)
\[
\nabla u_\e \to\nabla u \quad \text{a.e. in} \quad \Omega.
\]
Hence 
\[
a_\e(\nabla u_\e)\to a(\nabla u)\quad \text{a.e. in} \quad \Omega,
\]
%and since $a_\e(\nabla u_\e)\to {h}$ almost everywhere, the uniqueness of pointwise limit implies ${h}=a(\nabla u)$.
and so $h = a(\nabla u)$ thanks to \eqref{conv-debole-puntuale}.

Estimates \eqref{est:nablaanyp} and \eqref{est:anyp} then follows by letting $\e\to0$ in Proposition \ref{prop-quasi-stima-finale}. Finally, the estimate \eqref{est:anypL1} follows immediately from \eqref{def-a}. 
\end{proof}

\medskip

As already observed in Remark \ref{rem-thm_D2u} and Remark \ref{rem-thm_main2}, Theorem \ref{thm_D2u} and Theorem  \ref{thm_main2} are special cases of two more general results that we state and prove hereafter.  To this end we first introduce the assumptions on the source term $f$ :

\begin{equation}\label{ipotesi-f-campa} 
\begin{cases}
\,\, \text{if} \quad p > \frac{n}{2} \qquad \exists \, \lambda \in (n-2,n) \qquad \quad \quad \quad : \quad 
f \in  {\mathcal M}^{2, \lambda}_{loc}(\Omega),
%\qquad \qquad \quad \,\, \text{if} \quad p >\frac{n}{2}, 
\\
\\
\,\, \text{if} \quad p \leq \frac{n}{2} \qquad \exists \, \lambda \in (n-2,n), \,\, \exists \, s > \frac{n}{p} \quad :  \quad f \in L^s_{loc}(\Omega) \cap {\mathcal M}^{2, \lambda}_{loc}(\Omega), 
\end{cases}
\end{equation}
where we have denoted by ${\mathcal M}^{2, \lambda}$ the classical Morrey space.
Then we have 

\smallskip

\begin{remark} \label{caso-particolare}

\item  i) If $f$ satisfies \eqref{ipotesi-f-campa}, then $f \in L^q_{loc}(\Omega)$ where $q$ fulfills \eqref{def-q}, and therefore Theorem  \ref{thm_main1} applies. 

\item ii) If $f \in L^r_{loc}(\Omega)$, $r>n$, then $f$ satisfies \eqref{ipotesi-f-campa}. Indeed, by Holder inequality,  we have that $f \in{\mathcal M}^{2, n-\frac{2n}{r}}_{loc}(\Omega)$ (and $n-\frac{2n}{r}  \in (n-2,n)$, since $r>n$).  Moreover, $ \norm{f}_{{\mathcal M}^{2, n-\frac{2n}{r}}(\Omega')} \leq
C(n,r) \norm{f}_{L^r (\Omega')}$ for any open subset $\Omega'  \subset \subset \Omega$. 
\textit{Therefore, Theorem \ref{thm_D2u} and Theorem  \ref{thm_main2} are special cases of the two following general results.} 
\end{remark}

\medskip

\begin{theorem} \label{thm_D2u-generale}
	Assume $1<p\leq 2$ and let $u\in W^{1,p}_{loc}(\Omega)$ be a local weak solution of \eqref{eqn1} where $H$ satisfies \eqref{eqn:ellH} and $f$ satisfies  \eqref{ipotesi-f-campa}.
	Then
	\[
	u\in H^2_{loc}(\Omega) \cap C^{1,\beta}_{loc}(\Omega)
	\]
	for some $\beta \in (0,1)$ depending only on $n,p, \lambda$ and $H$. 
	
	Moreover, for any open ball $B_{2R} \subset \subset \Omega$ we have 
	\[
	\int_{B_{R/2}}\norm{D^2 u}^2 dx\leq C \Bigl[ R^{-n-2} \norm{a(\nabla u)}_{L^1(B_{2R} \setminus B_R)}^2 + \norm{f}_{L^2(B_{2R} )}^2 \Bigr],
	%\left(1+\norm{u}_{L^p(\Omega')}^{2(p-1)}+\norm{f}_{L^q(\Omega')}^2+\norm{f}^2_{L^2(\Omega')}\right).
	\]
	where  $C$ is a constant depending on $p,n,H, \lambda, B_R, B_{2R}, \norm{u}_{W^{1,p}(B_{2R})}, \norm{f}_{L^{\max\{2,s\}}(B_{2R})}$ and $  \norm{f}_{{\mathcal M}^{2, \lambda}(B_{2R})}.$
	%, \norm{f}_{{\mathcal M}^{2, \lambda}(B_{2R})}$.
	
	In particular, when $p=2$ we have 
	\[
	\int_{B_{R/2}}\norm{D^2 u}^2 dx\leq C \Bigl[ R^{-n-2} \norm{a(\nabla u)}_{L^1(B_{2R} \setminus B_R)}^2 + \norm{f}_{L^2(B_{2R} )}^2 \Bigr],
	\]
	where  $C$ is a constant depending only on $n,H$.
\end{theorem}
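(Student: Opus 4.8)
The plan is to prove Theorem \ref{thm_D2u-generale} by the same approximation scheme used for Theorem \ref{thm_main1}: work with the regularized solutions $u_\e$ of \eqref{eqn:ue}, obtain an $\e$-uniform $H^2_{loc}$ bound, and pass to the limit. First I would invoke Proposition \ref{prop-quasi-stima-finale}, which already gives
\[
\int_{B_{R/2}} \norm{\nabla a_\e(\nabla u_\e)}^2\,dx \le C_4\Bigl[R^{-n-2}\Bigl(\int_{B_{2R}\setminus B_R}\abs{a_\e(\nabla u_\e)}\,dx\Bigr)^2 + \int_{B_{2R}} f_\e^2\,dx\Bigr],
\]
and Corollary \ref{cor:quasi-stima-finale}, which controls $\int\eta^2[\e^2+H^2(\nabla u_\e)]^{p-2}\norm{D^2u_\e}^2$ by the same right-hand side. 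Since $p\le 2$ the weight $[\e^2+H^2(\nabla u_\e)]^{p-2}$ is \emph{large} where $\nabla u_\e$ is small, so this does not immediately bound $\norm{D^2u_\e}$ in $L^2$; what I need is an $L^\infty_{loc}$ bound on $\nabla u_\e$ that is uniform in $\e$, so that $[\e^2+H^2(\nabla u_\e)]^{p-2}\ge c\,(1+\norm{\nabla u_\e}_{L^\infty_{loc}})^{p-2}>0$. With such a bound one gets
\[
\int_{B_{R/2}} \norm{D^2 u_\e}^2\,dx \le C\bigl(1+\norm{\nabla u_\e}_{L^\infty(B_{2R})}\bigr)^{2-p}\,\Bigl[R^{-n-2}\norm{a_\e(\nabla u_\e)}_{L^1(B_{2R}\setminus B_R)}^2 + \norm{f_\e}_{L^2(B_{2R})}^2\Bigr],
\]
and passing to the limit (exactly as in the proof of Theorem \ref{thm_main1}, using $\nabla u_\e\to\nabla u$ a.e. and in $L^p$, weak lower semicontinuity of the $L^2$ norm of the Hessian, $\abs{f_\e}\le\abs{f}$, and $f_\e\to f$ in $L^2_{loc}$) gives $u\in H^2_{loc}$ together with the claimed estimate. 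For $p=2$ the weight is identically $1$, so no gradient bound is needed and the constant depends only on $n,H$; this recovers the last displayed inequality directly.

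The heart of the matter is therefore the $\e$-uniform interior gradient bound and the $C^{1,\beta}_{loc}$ regularity. This is where assumption \eqref{ipotesi-f-campa} enters: the Morrey condition $f\in\mathcal M^{2,\lambda}_{loc}$ with $\lambda>n-2$ (together with $f\in L^s_{loc}$, $s>n/p$, in the singular range $p\le n/2$) is precisely the sharp integrability ensuring that solutions of \eqref{eqn:ue} — which satisfy a uniformly elliptic, structurally controlled equation via \eqref{ell:eps}--\eqref{ell:fin} — enjoy local $C^{1,\beta}$ estimates with a modulus independent of $\e$. I would get this from the classical regularity theory for quasilinear equations with right-hand side in Morrey/Lorentz scales (e.g. the DiBenedetto--Tolksdorf $C^{1,\alpha}$ theory combined with Morrey-space estimates for the lower-order term, in the spirit of the references \cite{ser}, \cite{tol}, \cite{DeTh}, \cite{Sim}): the key point is that the constants in \eqref{ell:eps2}, \eqref{ell:fin} are uniform in $\e$, the $W^{1,p}$-norms $\norm{u_\e}_{W^{1,p}(\Omega')}$ are uniformly bounded by \eqref{eqn:stime1}, and $\norm{f_\e}_{L^s}\le\norm{f}_{L^s}$, $\norm{f_\e}_{\mathcal M^{2,\lambda}}\le\norm{f}_{\mathcal M^{2,\lambda}}$. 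This yields $\norm{\nabla u_\e}_{C^{0,\beta}(B_{2R})}\le C$ with $C$ depending only on the quantities listed in the statement and, in particular, an $\e$-uniform $L^\infty$ bound on $\nabla u_\e$ on compact subsets.

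Once the uniform $C^{1,\beta}$ bound is in hand, the remaining steps are routine: (i) the a.e. and $L^p_{loc}$ convergence $\nabla u_\e\to\nabla u$ upgrades, along a subsequence, to $C^{1}_{loc}$ (Ascoli--Arzel\`a on the $C^{1,\beta}$-bounded sequence) so that $u\in C^{1,\beta}_{loc}(\Omega)$; (ii) the uniform $H^2_{loc}$ bound gives $D^2 u_\e\rightharpoonup D^2 u$ weakly in $L^2_{loc}$, hence $u\in H^2_{loc}(\Omega)$; (iii) lower semicontinuity of the $L^2$ norm under weak convergence passes the Hessian estimate to the limit, while $a_\e(\nabla u_\e)\to a(\nabla u)$ in $L^1_{loc}$ (dominated convergence, using the uniform bound $\abs{a_\e(\nabla u_\e)}\le C(1+\abs{\nabla u_\e})^{p-1}$ and a.e. convergence) and $f_\e\to f$ in $L^2_{loc}$ handle the right-hand side. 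The main obstacle is squarely step two of the first paragraph — producing the $\e$-uniform $C^{1,\beta}$ (equivalently, interior gradient $L^\infty$) estimate under the Morrey-type hypothesis \eqref{ipotesi-f-campa} — since everything downstream is a bootstrap from Proposition \ref{prop-quasi-stima-finale} and the nondegeneracy of the weight that this bound unlocks; I would isolate this as a separate lemma on the approximating solutions before assembling the proof.
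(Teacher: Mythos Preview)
Your overall strategy matches the paper's exactly: approximate by $u_\e$, obtain an $\e$-uniform $C^{1,\beta}$ bound (hence a uniform $L^\infty$ gradient bound), use this to strip the weight $[\e^2+H^2(\nabla u_\e)]^{p-2}$ from the Hessian estimate of Corollary \ref{cor:quasi-stima-finale}/Proposition \ref{prop-quasi-stima-finale}, and pass to the limit by weak lower semicontinuity. The only substantive difference is how the $C^{1,\beta}$ bound---the step you correctly flag as the crux---is actually produced. Rather than appealing to a black-box ``DiBenedetto--Tolksdorf theory with Morrey right-hand side'' (which, in the anisotropic setting and under the Morrey hypothesis \eqref{ipotesi-f-campa}, is not directly available in \cite{ser,tol,DeTh,Sim}), the paper inserts an auxiliary Lemma \ref{campa}: solve $-\Delta v_\e=f_\e$ on $B_{2R}$ and use Campanato's $\mathcal L^{2,\lambda}$ estimates to get $F_\e:=\nabla v_\e\in C^{0,\alpha}_{loc}$ with $\norm{F_\e}_{C^{0,\alpha}}\le C\norm{f}_{\mathcal M^{2,\lambda}}$. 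Then $-\mathrm{div}\bigl(a_\e(\nabla u_\e)-F_\e(x)\bigr)=0$ is a \emph{homogeneous} equation with $C^{0,\alpha}$ dependence on $x$, to which Lieberman's $C^{1,\beta}$ theorem \cite{lieb2} applies directly (after first securing a uniform $L^\infty$ bound on $u_\e$ via \cite{ser} when $p\le n$). This ``absorb $f$ into the divergence'' device is the content of the separate lemma you anticipated; once it is in place, the rest of your argument goes through verbatim.
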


\medskip

\begin{theorem} \label{thm_main2-generale}
	Let $u\in W^{1,p}_{loc}(\Omega)$ be a local solution of \eqref{eqn1}, where $H$ satisfies \eqref{eqn:ellH} and $f$ satisfies  \eqref{ipotesi-f-campa}. Then
	\[
	u\in C^{1,\beta}_{loc}(\Omega)
	\]
	for some $\beta \in (0,1)$ depending only on $n,p, \lambda$ and $H$. 
	
	Moreover, for any open ball $B_{2R} \subset \subset \Omega$ we have 
	\begin{equation}\label{int:est}
	\int_{B_{R/2} \setminus Z}\left[H^2(\nabla u)\right]^{p-2}\norm{D^2 u}^2 dx\leq C,
	\end{equation}
	where $Z$ denotes the set of critical points of $u$ and 
	$C$ is a constant depending on $p,n,H, $
	$\lambda, B_R, B_{2R}, \norm{u}_{W^{1,p}(B_{2R})}, \norm{f}_{L^{\max\{2,s\}}(B_{2R})}$ and $  \norm{f}_{{\mathcal M}^{2, \lambda}(B_{2R})}.$
	%$C_2=C_2(n,p,H,\norm{f}_{L^q(\Omega)},\norm{u}_{W^{1,p}(\Omega')},\Omega',
	%\Omega'')$.
\end{theorem}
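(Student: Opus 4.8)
The plan is to run the approximation scheme of Section \ref{sect_approximation} and to feed into Proposition \ref{prop_int_bound} a \emph{uniform} (in $\e$) bound for $\nabla u_\e$. Fix an open ball $B_{2R}\subset\subset\Omega$, choose $\Omega'\subset\subset\Omega$ with $\overline{B_{2R}}\subset\Omega'$, and let $u_\e$ solve \eqref{eqn:ue} on $\Omega'$. By \eqref{eqn:stime1} and Poincar\'e's inequality the norms $\norm{u_\e}_{W^{1,p}(\Omega')}$ are bounded uniformly in $\e$, and $u_\e\to u$ strongly in $W^{1,p}(\Omega')$. The first step is to invoke the local regularity theory for quasilinear equations of $p$-Laplace type with data in the classes appearing in \eqref{ipotesi-f-campa} (DiBenedetto, Tolksdorf, Lieberman, and, for the Morrey/potential framework, Duzaar--Mingione): since the ellipticity \eqref{ell:eps}--\eqref{ell:eps2} is independent of $\e\in[0,1)$, this produces a constant $M$ and an exponent $\beta\in(0,1)$, depending only on $n,p,H,\lambda$, on $B_R\subset B_{2R}$, on $\norm{u}_{W^{1,p}(B_{2R})}$ and on the relevant norm of $f$, with
\begin{equation*}
\norm{\nabla u_\e}_{L^\infty(B_R)}\le M,\qquad \norm{\nabla u_\e}_{C^{0,\beta}(B_R)}\le M\qquad\text{for every }\e\in(0,1).
\end{equation*}
Combining these uniform estimates with the $W^{1,p}$-convergence, a compactness argument gives $\nabla u_\e\to\nabla u$ locally uniformly in $\Omega$, whence $u\in C^{1,\beta}_{loc}(\Omega)$.

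Next I would plug $M$ into the estimate \eqref{eqn:ut1} of Proposition \ref{prop_int_bound}. Take $\eta\in C^{0,1}_c(B_R)$ with $0\le\eta\le1$, $\eta\equiv1$ on $B_{R/2}$ and $\abs{\nabla\eta}\le C/R$. On the support of $\eta$ one has $\abs{\nabla u_\e}\le M$, and the quantity $[\e^2+H^2(\xi)]^{p-2}H^2(\xi)$ is bounded by a constant depending only on $p,H,M$ whenever $\abs{\xi}\le M$: for $p\ge2$ this is immediate, while for $p<2$ one uses $[\e^2+H^2]^{p-2}H^2\le H^{2(p-1)}$. Since moreover $\abs{f_\e}\le\abs{f}$ and, by \eqref{ipotesi-f-campa}, $f\in L^2_{loc}(\Omega)$ (indeed $\mathcal M^{2,\lambda}_{loc}\subset L^2_{loc}$, and $s>n/p\ge2$ when $p\le n/2$), we obtain a constant $\bar C$, with exactly the dependences stated in the theorem, such that
\begin{equation*}
\int_{\Omega'}\eta^2\,[\e^2+H^2(\nabla u_\e)]^{p-2}\norm{D^2 u_\e}^2\,dx\le\bar C\qquad\text{for every }\e\in(0,1).
\end{equation*}
If $1<p\le2$ the weight $[\e^2+H^2(\nabla u_\e)]^{p-2}$ is bounded below by a positive constant depending on $p,M$, so $D^2u_\e$ is already bounded in $L^2(B_{R/2})$ uniformly in $\e$; this is exactly what yields $u\in H^2_{loc}(\Omega)$ and the estimate of Theorem \ref{thm_D2u-generale}. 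For general $p$ one cannot do this near the critical set, and this is the crux of the argument.

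For the weighted estimate I would pass to the limit away from $Z=\{\nabla u=0\}$. Given $\delta>0$, let $V_\delta=\{x\in B_{R/2}:\abs{\nabla u(x)}>\delta\}$ (open) and let $K\subset V_\delta$ be compact. By the locally uniform convergence $\nabla u_\e\to\nabla u$, one has $\abs{\nabla u_\e}\ge\delta/2$ on $K$ for $\e$ small; hence on $K$ the weights $w_\e:=[\e^2+H^2(\nabla u_\e)]^{p-2}$ lie between two positive constants depending on $\delta,M$ and converge \emph{uniformly} to $w:=[H^2(\nabla u)]^{p-2}$. In particular $\int_K\norm{D^2u_\e}^2\,dx\le C(\delta,M)\,\bar C$, so $D^2u_\e\rightharpoonup D^2u$ weakly in $L^2(K)$ (the limit being identified through $\nabla u_\e\to\nabla u$ in $\mathcal D'$); since $K\subset V_\delta$ is arbitrary and $V_\delta$ exhausts $B_{R/2}\setminus Z$ as $\delta\downarrow0$, this already gives $u\in H^2_{loc}(\Omega\setminus Z)$. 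Finally, writing $\int_K\eta^2w_\e\norm{D^2u_\e}^2=\int_K\eta^2w\norm{D^2u_\e}^2+\int_K\eta^2(w_\e-w)\norm{D^2u_\e}^2$, the last term vanishes in the limit (uniform convergence of the weights times a bounded $L^1$ quantity), while $v\mapsto\int_K\eta^2w\,\norm{v}^2\,dx$ is weakly lower semicontinuous; therefore
\begin{equation*}
\int_K\eta^2\,[H^2(\nabla u)]^{p-2}\norm{D^2u}^2\,dx\le\liminf_{\e\to0}\int_{\Omega'}\eta^2\,[\e^2+H^2(\nabla u_\e)]^{p-2}\norm{D^2u_\e}^2\,dx\le\bar C.
\end{equation*}
Since $\eta\equiv1$ on $B_{R/2}\supset K$, taking the supremum over all compact $K\subset B_{R/2}\setminus Z$ (monotone convergence) yields \eqref{int:est} with $C=\bar C$. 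The main obstacle is exactly this degeneracy: when $p>2$ the weight $[H^2(\nabla u)]^{p-2}$ vanishes on $Z$, forcing one to work on compact subsets of $\Omega\setminus Z$ and to settle for $H^2_{loc}(\Omega\setminus Z)$; the complementary large-gradient degeneracy occurring for $p<2$ is instead neutralized once the uniform $L^\infty$ bound on $\nabla u_\e$ is available, so that the other genuinely delicate ingredient is the uniform gradient regularity for the regularized equations under the Morrey/$L^s$ hypotheses \eqref{ipotesi-f-campa}.
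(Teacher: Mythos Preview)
Your proposal is correct and follows essentially the same architecture as the paper: approximate, obtain uniform $C^{1,\beta}$ bounds for $u_\e$, feed these into the weighted Hessian inequality of Proposition~\ref{prop_int_bound}, and pass to the limit on compact subsets of $\Omega\setminus Z$ using uniform convergence of the weights and weak lower semicontinuity. Your limiting argument via the sets $V_\delta$ and compact $K$ is just a repackaging of the paper's identification of the weak $L^2$ limit of $\phi^{i,j}_\e=(\e^2+\abs{\nabla u_\e}^2)^{(p-2)/2}\partial_{ij}u_\e$.

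The one place where the paper is more explicit than you is the uniform $C^{1,\beta}$ step, which you treat as a black box citing ``DiBenedetto, Tolksdorf, Lieberman, Duzaar--Mingione''. The paper instead gives a self-contained mechanism under the Morrey hypothesis \eqref{ipotesi-f-campa}: via Lemma~\ref{campa} one writes $f_\e=-\mathrm{div}\,F_\e$ with $F_\e\in C^{0,\alpha}$ bounded uniformly by $\norm{f}_{\mathcal M^{2,\lambda}}$ (this is Campanato's interior $\mathcal L^{2,\lambda+2}$ estimate for $-\Delta$), and then the regularized equation becomes $-\mathrm{div}\bigl(a_\e(\nabla u_\e)-F_\e(x)\bigr)=0$, to which Lieberman's theorem for \emph{homogeneous} equations with H\"older $x$-dependence applies directly. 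This buys a clean dependence of the constants on $\norm{f}_{\mathcal M^{2,\lambda}}$ and avoids appealing to nonlinear potential estimates. A second, minor difference: to bound the right-hand side of the weighted Hessian inequality the paper routes through \eqref{eqn:ut4-pippo} and \eqref{eqn:ut1-bis} (i.e.\ through $\int\abs{a_\e(\nabla u_\e)}^2$), whereas you bound $[\e^2+H^2]^{p-2}H^2\abs{\nabla\eta}^2$ directly using the $L^\infty$ gradient bound $M$; your shortcut is perfectly valid once $M$ is in hand.
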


%\begin{equation}\label{ipotesi-f-campa} 
%\begin{cases}
%\exists \, \lambda \in (n-2,n), \quad \exists \, s > \frac{n}{p} \quad :
%\\
%f \in  {\mathcal M}^{2, \lambda}_{loc}(\Omega) \qquad \qquad \quad \,\, \text{if} \quad p >\frac{n}{2}, 
%\\
%f \in L^s_{loc}(\Omega) \cap {\mathcal M}^{2, \lambda}_{loc}(\Omega) \quad \text{if} \quad p \leq \frac{n}{2}, 
%\end{cases}
%\end{equation}
%for some $ n-2 <  \lambda <n$  and $ s > \frac{n}{p}$. 

\medskip

To prove  Theorem \ref{thm_D2u-generale} and Theorem  \ref{thm_main2-generale} we need the following useful auxiliary result (inspired by the reading of Section 5 of \cite{lieb}). 

\medskip

\begin{lemma}\label{campa} 
	Assume $n \geq2$ and let $U$ be an open bounded set of $\R^n$ of class $C^2$. Let $f$ be a function belonging to the Morrey space 
	${\mathcal M}^{2, \lambda}(U)$ with $ n-2 <  \lambda <n$ and set $ \alpha =  \frac{\lambda -n +2}{2} \in (0,1)$.
	Then there exists $F \in H^1(U, \R^n) \cap C^{0, \alpha}_{loc}(U, \R^n)$ such that 
	\begin{equation}\label{eqn:campa}
	-\mathrm{div}F = f \quad in \quad U
	\end{equation}
	and, for any open Lipschitz set $U' \subset \subset U$, 
	\begin{equation} \label{stima-campa}
	\norm{F}_{C^{0, \alpha}(U')} \leq C 
    \norm{f}_{{\mathcal M}^{2, \lambda}(U)},
	\end{equation}
	where $C$ is a constant depending only on $n, \lambda, U'$ and $U$. 
\end{lemma}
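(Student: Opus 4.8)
The plan is to construct $F$ as (minus) the gradient of a solution to the Poisson equation $-\Delta w = f$ in $U$ with, say, homogeneous Dirichlet boundary data, and then to extract the claimed Morrey/Hölder information from classical elliptic regularity together with the Morrey-space hypothesis on $f$. Concretely, first I would solve
\begin{equation*}
\begin{cases}
-\Delta w = f & \text{in } U,\\
w = 0 & \text{on } \partial U,
\end{cases}
\end{equation*}
which has a unique solution $w \in H^1_0(U)$ since $f \in {\mathcal M}^{2,\lambda}(U) \subset L^2(U)$ (on a bounded set). Then set $F := -\nabla w$, so that $-\mathrm{div}\,F = -\Delta w = f$ in $U$, and $F \in L^2(U,\R^n)$ by construction. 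The $H^1$-regularity of $F$, i.e. $w \in H^2_{loc}(U)$, is immediate from interior $L^2$ elliptic regularity for the Laplacian, and in fact $w \in H^2(U)$ up to the boundary since $\partial U$ is $C^2$; in any case the statement only requires $F \in H^1(U,\R^n)$, which follows.

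The heart of the matter is the interior Hölder bound \eqref{stima-campa}. Here I would invoke the classical Morrey-space regularity theory for the Poisson equation (see e.g. the analysis in Section 5 of \cite{lieb}): if $f \in {\mathcal M}^{2,\lambda}(U)$ with $n-2 < \lambda < n$, then $\nabla w$ belongs to the Morrey space ${\mathcal M}^{2,\lambda}_{loc}(U)$ with a quantitative estimate, and — since $\lambda > n-2$ — this Morrey regularity of $\nabla w$ is equivalent, via Campanato's characterization, to $\nabla w \in C^{0,\alpha}_{loc}(U)$ with the Campanato exponent $\alpha = \frac{\lambda-n+2}{2} \in (0,1)$. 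The chain is: a Caccioppoli-type / excess-decay argument for the harmonic part, combined with the Morrey bound on $f$, yields decay of $\fint_{B_\rho}\abs{\nabla w - (\nabla w)_{B_\rho}}^2$ of order $\rho^{2\alpha}$ on balls contained in $U'$ (with the constant controlled by $\mathrm{dist}(U',\partial U)$, $\norm{f}_{{\mathcal M}^{2,\lambda}(U)}$, and — after absorbing a lower-order term — $\norm{w}_{L^2(U)} \le C(U)\norm{f}_{L^2(U)} \le C(U)\norm{f}_{{\mathcal M}^{2,\lambda}(U)}$), and then Campanato's theorem converts this into the stated $C^{0,\alpha}(U')$ bound. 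All constants depend only on $n$, $\lambda$, $U'$ and $U$, as claimed, because the Poincaré/Calderón–Zygmund constants for $U$ and the covering/distance data for $U' \subset\subset U$ are the only ingredients.

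**Expected main obstacle.** The analytically substantive step is the excess-decay estimate for $\nabla w$, i.e. showing $\nabla w \in {\mathcal M}^{2,\lambda}_{loc}$ quantitatively; everything else (existence, the divergence identity, $H^1$-regularity, and the Campanato-to-Hölder conversion) is routine. The delicate points within that step are keeping track of the dependence of the constant on $U'$ and $U$ only (not on $w$ itself), which requires absorbing the $L^2$-norm of $w$ using the a priori bound $\norm{w}_{H^1_0(U)} \lesssim \norm{f}_{L^2(U)} \lesssim \norm{f}_{{\mathcal M}^{2,\lambda}(U)}$, and verifying the arithmetic $\alpha = \frac{\lambda-n+2}{2}$ matches the Campanato exponent $n+2\alpha = \lambda + 2$, i.e. that $\nabla w$ lies in the Campanato space ${\mathcal L}^{2,\lambda+2}_{loc}$. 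I expect no genuine difficulty beyond bookkeeping, since this is a well-trodden part of linear elliptic theory; the only reason it is isolated as a lemma is that the nonlinear equation \eqref{eqn1} does not directly give such a representation, whereas here we are free to use the linear structure of $-\Delta$.
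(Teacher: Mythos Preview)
Your approach is essentially the same as the paper's: solve the Dirichlet problem $-\Delta w = f$ on $U$, take $F$ to be the gradient, and use Morrey/Campanato regularity for the Laplacian together with Campanato's integral characterisation of H\"older spaces to obtain the $C^{0,\alpha}$ bound. The paper cites Campanato's theorem to get $D^2 w \in {\mathcal M}^{2,\lambda}_{loc}$, then uses Poincar\'e to pass to $\nabla w \in {\mathcal L}^{2,\lambda+2}_{loc}$, which is exactly the excess-decay conclusion you describe.

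Two small slips to fix. First, a sign: with $-\Delta w = f$ and $F = -\nabla w$ one gets $-\mathrm{div}\,F = \Delta w = -f$; you want $F = \nabla w$ (as the paper does). Second, the sentence ``$\nabla w$ belongs to ${\mathcal M}^{2,\lambda}_{loc}(U)$ \dots\ and since $\lambda > n-2$ this is equivalent to $\nabla w \in C^{0,\alpha}_{loc}$'' is not right as stated: membership in ${\mathcal M}^{2,\lambda}$ with $\lambda<n$ does not imply H\"older continuity. What you actually need (and what your subsequent excess-decay description correctly gives) is $\nabla w \in {\mathcal L}^{2,\lambda+2}_{loc}$, i.e.\ decay of the mean oscillation of order $\rho^{\lambda+2}$, and then Campanato's theorem applies because $\lambda+2>n$. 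Equivalently, the Morrey estimate at the second-derivative level is $D^2 w \in {\mathcal M}^{2,\lambda}_{loc}$, from which Poincar\'e yields $\nabla w \in {\mathcal L}^{2,\lambda+2}_{loc}$. Once this is phrased correctly, your argument and the paper's coincide.
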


\begin{proof} The proof relies on some results of Campanato and Morrey. \footnote{Recall that the Morrey space ${\mathcal M}^{2, \lambda}(A)$ is isomorphic (as Banach space) to the Campanato space ${\mathcal L}^{2, \lambda}(A)$ whenever $A$ is an open  bounded Lipschitz set of $\R^n$ and $ 0 \leq \lambda< n$. We shall freely use this result in the course of the proof. More details on this property as well as other useful results used in this paper on Morrey's and Campanato's spaces can be found in \cite{giu}[Section 2.3]}

Let $u \in H^1_0(U) \cap H^2(U)$ be the unique weak solution to $ - \Delta u =f$ in $U$ and recall that $\norm{u}_{H^2(U)}^2 \leq C_1 \norm{f}_{L^2(U)}^2,$ for some constant $C_1$ depending only on $ n$ and $U$. Also, by a result of Campanato \cite[Teorema 10.I]{Campa3} we know that 
	\begin{equation} \label{stima-campa-loc}
    \norm{\partial_{jk}u}_{{\mathcal M}^{2, \lambda}(U')}^2 \leq C_2 \left[ \norm{u}_{H^2(U)}^2 + \norm{f}_{{\mathcal M}^{2, \lambda}(U)}^2\right] \qquad  \forall
     \, j,k=1,...,n
    \end{equation}
    where the constant $C_2$ depends only on $ \lambda, n$ and $U'$.  Hence,
\begin{equation} \label{stima-campa-loc2}
\norm{\nabla \partial_k u}_{{\mathcal M}^{2, \lambda}(U')}^2 
\leq C_3 \norm{f}_{{\mathcal M}^{2, \lambda}(U)}^2 \qquad  \forall
\, j,k=1,...,n
\end{equation}   
where $C_3$ is a constant that depends only on $ \lambda, n, U'$ and $U$.
Set $ w = \partial_k u$, then Poincar\'e inequality and \eqref{stima-campa-loc2} imply that, for any $ x_0 \in U'$ and any $0< \rho < \frac{ dist(U', \, \partial U)}{2}$, 
\begin{equation} \label{stima-campa-loc3}
\int_{B_\rho(x_0)}  \vert w - w_{B_\rho(x_0)} \vert^2 dx \leq c \rho^2 \int_{B_\rho(x_0)}  \vert \nabla  \partial_k u \vert^2 \leq c \rho^2 C_3  \norm{f}_{{\mathcal M}^{2, \lambda}(U)}^2 \rho^{\lambda} = c C_3  \norm{f}_{{\mathcal M}^{2, \lambda}(U)}^2\rho^{\lambda +2}
\end{equation} 
where $ w_{\omega}:= \frac{1}{ \vert \omega \vert} \int_{\omega} w \, dx$ and $c=c(n)$.  

Moreover, when $\rho \geq \frac{ dist(U', \, \partial U)}{2}$, we have 
\begin{equation} \label{stima-campa-loc3bis}
\begin{split}
&\int_{U \cap  B_\rho(x_0)}  \vert w - w_{U \cap  B_\rho(x_0)}  \vert^2 dx \leq 
2 \norm{w}_{L^2(U)}^2 \leq 2 \norm{w}_{L^2(U)}^2 \left[ \frac{2\rho}{dist(U', \, \partial U)}\right]^{\lambda +2}  
\\
&= 2 \left[ \frac{2}{dist(U', \, \partial U)}\right]^{\lambda +2}
\norm{\partial_k u}_{L^2(U)}^2 \rho^{\lambda +2} \leq 2 \left[ \frac{2}{dist(U', \, \partial U)}\right]^{\lambda +2}
C_1^2\norm{f}_{L^2(U)}^2 \rho^{\lambda +2} 
\end{split} 
\end{equation} 
%\end{proof}
Combining \eqref{stima-campa-loc3} and \eqref{stima-campa-loc3bis} we immediately get that $ \partial_k u$ belongs to the Campanato space 
${\mathcal L}^{2, \lambda+2}(U')$ and
\begin{equation} \label{stima-campa-loc4}
\norm{\partial_k u}_{{\mathcal L}^{2, \lambda+2}(U')}^2 
\leq C_4 \norm{f}_{{\mathcal M}^{2, \lambda}(U)}^2 \qquad  \forall
\, k=1,...,n
\end{equation}   
where $C_4$ is a constant depending only on $n, \lambda, U'$ and $U$. 

Now, since $ n <  \lambda+2 < n +2$, the well-known integral characterisation of Holder spaces by Campanato \cite {Campa1, Campa2, giu} tell us that 

\begin{equation} \label{stima-campa-loc5}
\partial_k u \in C^{0,\frac{\lambda -n +2}{2}}(\overline {U'}), \quad 
\norm{\partial_k u}_{C^{0,\frac{\lambda -n +2}{2}}(\overline {U'})}
\leq C_5 \norm{f}_{{\mathcal M}^{2, \lambda}(U)} \qquad  \forall
\, k=1,...,n
\end{equation} 
where $C_5$ is a constant depending only on $n, \lambda, U'$ and $U$.  

The desired conclusion then follows by taking $ F =  \nabla u$.
\end{proof} 

\medskip

%\begin{remark}\label{remark-morrey} 
%Note that, under the assumptions on the source term $f$ of \ref{thm_D2u} and \ref{thm_main2} we have that $f$ belongs to the Morrey space 
%$ {\mathcal M}^{2, \gamma}$, for some $ \gamma \in (n-2,n)$. Specifically, with %$\gamma  =  \lambda $, if $ p >  \frac{n}{ 2}$, and $ \gamma  =  n-2 +  \frac{2 %\delta}{n}$, if  $ p  \leq \frac{n}{ 2}$. 
%\end{remark}

We are now ready to prove Theorem \ref{thm_D2u-generale}.

\medskip

\begin{proof}[Proof of Theorem \ref{thm_D2u-generale}]
Set 
\begin{equation}%\label{ipotesi-f-campa} 
\tilde s:= \begin{cases}
2  \qquad \text{if} \quad p > \frac{n}{2},  
\\
s  \qquad \text{if} \quad p  \leq \frac{n}{2}.
\end{cases}
\end{equation}
Let us consider an open ball %$B_{2R}$ such that 
	$B_{2R} \subset \subset \Omega$  and let $f_\e$ and $u_\e$  be as in Section \ref{sect_approximation}. 
Recall that, in the course of the proof of Theorem \ref{thm_main1}, we proved that
\begin{equation}\label{stima-sob1}
\norm{u_\e}_{W^{1,p}(B_{2R})} \leq C_1' := C_1'(p,n,H, B_{2R}, \norm{u}_{W^{1,p}(B_{2R})}, \norm{f}_{L^ {\tilde s}(B_{2R})})  
\end{equation} 
\begin{equation}\label{stima-sob1-bis}
\norm{a_\e(\nabla u_\e)}_{L^1(B_{2R})} \leq C_1'
\end{equation}
and that, up to a subsequence, 
\begin{equation}\label{conv-debole-bis-bis}
\begin{split}
\nabla u_\e \to \nabla u \quad \text{strongly in } W^{1,p}_{loc}(\Omega) \quad  \text{and }  \quad \text{a.e. in} \quad \Omega,
\end{split}
\end{equation}	
\begin{equation}\label{conv-debole-puntuale-bis}
\begin{split}
a_\e(\nabla u_\e)\to {a(\nabla u)}\quad \text{weakly in } H^1_{loc}(\Omega), \quad  \text{strongly in } L^2_{loc}(\Omega)\quad \text{and }  \quad \text{a.e. in} \quad \Omega.
\end{split}
\end{equation}	
By making use of \eqref{stima-sob1} we have $u_\e \in C^0(\Omega)$ and the following bound
\begin{equation}\label{stima-james1}
\norm{u_\e}_{L^\infty(B_R)} \leq C_2' := C_2'(p,n,H, B_{2R}, \norm{u}_{W^{1,p}(B_{2R})}, \norm{f}_{L^ {\tilde s}(B_{2R})}).
\end{equation}
Indeed, if $ p>n$ we have  $\norm{u_\e}_{L^\infty(B_R)} \leq C(B_R,p) \norm{u_\e}_{W^{1,p}(B_{R})}$ by Sobolev embedding, and so \eqref{stima-james1}
follows from \eqref{stima-sob1}. When $p \leq n$ we have 
$\norm{u_\e}_{L^\infty(B_R)} \leq C'(p,n,H, B_{2R}, \norm{u_\e}_{L^p(B_{2R})}, \norm{f}_{L^ {\tilde s}(B_{2R})})$, by the celebrated results in \cite{ser}, and once again \eqref{stima-james1} follows from \eqref{stima-sob1}.

Now we observe that $f_\e \in {\mathcal M}^{2, \lambda}(B_{2R})$ and $
\norm{f_\e}_{{\mathcal M}^{2, \gamma}(B_{2R})} \leq \norm{f}_{{\mathcal M}^{2, \gamma}(B_{2R})}$. We can therefore use Lemma \ref{campa} to obtain vector fields $F_\e \in C^{0, \alpha}(\overline{B_R})$ such that 
\begin{equation} \label{stima-campa-campa}
\norm{F_\e}_{C^{0, \alpha}(B_R)} \leq C 
\norm{f_\e}_{{\mathcal M}^{2, \lambda}(B_{2R})} \leq C 
\norm{f}_{{\mathcal M}^{2, \lambda}(B_{2R})} 
\end{equation}
where $\alpha =  \frac{\lambda -n +2}{2} \in (0,1)$ and $C$ is a constant depending only on $n, \lambda, B_R$ and $B_{2R}$. 

Now we set $A_\e(x, \xi) := a_\e (\xi) - F_\e(x)$, $ (x,\xi) \in B_R \times (\R^n \setminus \{0\})$ and observe that 
 \begin{equation} \label{eq-campa-liebermann}
-\mathrm{div}(A_\e(x, \nabla u_\e)) =0 \quad\text{in } B_R.
 \end{equation}
 We can therefore apply \cite[Theorem 1.7]{lieb2} to obtain $\beta = \beta(n,p,H, \lambda) \in (0,1)$  such that 
 \begin{equation} \label{stima-campa-liebermann}
	\norm{u_\e}_{C^{1,\beta}(B_\frac{R}{2})}\leq C_3'=C_3'(p,n,H, \lambda, B_R, B_{2R}, \norm{u}_{W^{1,p}(B_{2R})}, \norm{f}_{L^ {\tilde s}(B_{2R})}, \norm{f}_{{\mathcal M}^{2, \lambda}(B_{2R})}).
 \end{equation} 
Hence, up to a subsequence, $u_\e\to u$ in $C^1_{loc}(\Omega)$, $ u \in  C^{1,\beta}_{loc}(\Omega)$. 

By \eqref{eqn:ut4-pippo} and $p\leq 2$ we get
\begin{equation}\label{dis:hessiana}
\begin{split}
&\int_{B_\frac{R}{2}}\norm{D^2 u_\e}^2 dx \leq
C_4' %\int_{B_R} \norm{D^2 u_\e}^2 dx \leq
\int_{B_\frac{R}{2}}\left[ \e^2+H^2(\nabla u_\e) \right]^{p-2}\norm{D^2 u_\e}^2 dx \leq 
\\
& C_4' C_2 \left[ \frac{4}{R^2}\int_{B_R}\abs{a_\e(\nabla u_\e)}^2 dx+ \int_{B_R} f_\e^2 dx\right]
\end{split}
\end{equation}
where $ C_2$ is a constant depending only on $n,p, H$ and $C_4'$ is a positive constant depending only on $C_3'$ (note that one can take $C_4' =1$ when $p=2$). 
Then, inserting \eqref{eqn:ut1-bis} into the latter yields
\begin{equation}
\begin{split}
&\int_{B_\frac{R}{2}}\norm{D^2 u_\e}^2 dx \leq
C_4' C_2 \left[ \frac{4}{R^2}\int_{B_R}\abs{a_\e(\nabla u_\e)}^2 dx+ \int_{B_R} f_\e^2 dx\right] 
\\
& \leq C_4' C(n,p,H) \Bigl[ R^{-n-2} \left( \int_{B_{2R}\setminus B_R}\vert a_\e(\nabla u_\e) \vert dx \right)^2+ \int_{B_{2R}} f^2 dx \Bigr]
\\
& \leq C_5' = C_5'(p,n,H, \lambda, B_R, B_{2R}, \norm{u}_{W^{1,p}(B_{2R})}, \norm{f}_{L^ {\tilde s}(B_{2R})}, \norm{f}_{{\mathcal M}^{2, \lambda}(B_{2R})})
\end{split}
\end{equation}
where in the last inequality we have used \eqref{stima-sob1-bis}. Therefore, up to a subsequence, $u_\e\to u$ weakly in $H^2_{loc}(\Omega)$ and the thesis follows by letting $ \e\to0$ in \eqref{dis:hessiana} and then recalling \eqref{est:anyp}.

%	\begin{equation}
%	\left(\e^2+C^2\right)^{p-2}\int_{\Omega''}\norm{D^2 u_\e}^2 dx\leq \int_{\Omega''}\left[ \e^2+H^2(\nabla u_\e) \right]^{p-2}\norm{D^2 u_\e}^2 dx\leq C_2.
%\end{equation}
	%Hence $u_\e\in H^2(\Omega'')$ uniformly in $\e$, so that $u_\e\to u$ weakly in $H^2(\Omega'')$ and our thesis follows.
\end{proof}

\medskip

\begin{proof}[Proof of Theorem \ref{thm_main2-generale}]
We repeat the proof of Theorem \ref{thm_D2u} until the estimate \eqref{stima-campa-liebermann}. Hence, up to a subsequence, 
\begin{equation}\label{miservepoi}
u_\e \to u \quad \text{in} \quad C^1_{loc}(\Omega),  \qquad u \in  C^{1,\beta}_{loc}(\Omega).
\end{equation}
By \eqref{eqn:ut4-pippo}, \eqref{eqn:ut1-bis} and \eqref{stima-sob1-bis} we have that 
\begin{equation}\label{miservepoi-bis}
\begin{split}
& \int_{B_\frac{R}{2}}\left[ \e^2+H^2(\nabla u_\e) \right]^{p-2}\norm{D^2 u_\e}^2 dx \leq C_2 \left[ \frac{4}{R^2}\int_{B_R}\abs{a_\e(\nabla u_\e)}^2 dx+ \int_{B_R} f_\e^2 dx\right] 
%\\
%& C_4' C(n,p,H) \Bigl[ R^{-n-2} \left( \int_{B_{2R}\setminus B_R}\vert a_\e(\nabla u_\e) \vert dx \right)^2+ \int_{B_{2R}} f^2 dx \Bigr]
\\
& \leq C_5' = C_5'(p,n,H, \lambda, B_R, B_{2R}, \norm{u}_{W^{1,p}(B_{2R})}, \norm{f}_{L^ {\tilde s}(B_{2R})}, \norm{f}_{{\mathcal M}^{2, \lambda}(B_{2R})}) \,;
\end{split}
\end{equation}
therefore, for every $i,j\in\{1,\dots,n\}$ ,
	\begin{equation}
	\phi^{i,j}_\e\coloneqq \left(\e^2+\abs{\nabla u_\e}^2\right)^{\frac{p-2}{2}}\partial_{ij}u_\e
	\end{equation}
	is uniformly bounded in $L^2_{loc}(\Omega)$ w.r.t. $\e>0$. Hence, up to a subsequence,
	\begin{equation}\label{conv:w}
	\phi^{i,j}_\e\to \phi^{i,j} \quad \text{weakly in } L^2_{loc}(\Omega) \quad \text{as } \e\to 0.
	\end{equation}
%	weakly in  $L^2_{loc}(\Omega)$ as $\e\to 0$. 
	
	In view of \eqref{miservepoi-bis}, \eqref{conv:w} and the weak lower semicontinuity of the $L^2$ norm, to get our thesis it is enough to prove that
\begin{equation}\label{identita-limite}
	\phi^{i,j}=\abs{\nabla u}^{p-2}\partial_{ij}u\quad\text{a.e. in } \Omega\setminus Z.
\end{equation}
	%First, by \cite{ser}, $u_\e$ is uniformly bounded in $C^{0,\a}(\Omega'')$, and by \cite[Theorem 5.3]{lieb} $u\in C^1$, so that  $\Omega\setminus Z$ is open.
	
	To this end, we fix an arbitrary open ball $ \mathcal{B}_{2R} \subset\subset \Omega \setminus Z$, then $\abs{\nabla u}\geq 2c>0$ in $\mathcal{B}_{2R}$ by definition of $Z$. Hence, by \eqref{miservepoi}, we have 
	
%	We have that \cite[Theorem 5.3]{lieb} provides the existence a constant $C=C(n,p,H, \norm{f}_{L^q},\norm{u}_{W^{1,p}(\Omega')},\Omega',B)$ such that
%	\begin{equation}\label{est:abovenablaue}
%	\norm{u_\e}_{C^{1,\theta}(\overline{B})}\leq C,
%	\end{equation}
	%Thus, up to a subsequence,
%	\[
%	u_\e\to u\quad\text{in }C^1(\overline{B}),
%	\]
%	so that, since $B\subset\subset \Omega'\setminus Z$,
	\begin{equation}\label{est:belownablaue}
	\abs{\nabla u_\e}\geq c\quad\text{in }\mathcal{B}_{2R}, \quad \text{for all small enough } \e.
	\end{equation}
%	for some $c=c(n,p,H,\norm{u}_{W^{1,p}(\Omega')},\Omega',B)>0$.
By using \eqref{miservepoi-bis}, \eqref{est:belownablaue} and \eqref{stima-campa-liebermann} we find
	\begin{equation*}
	\begin{split}
	&\int_{\mathcal{B}_\frac{R}{2}}\norm{D^2 u_\e}dx \leq C(c,p,H, C_3') \int_{\mathcal{B}_\frac{R}{2}} \left[ \e^2+H^2(\nabla u_\e)\right]^{p-2}\norm{D^2 u_\e}^2 dx
	\\
	& \leq C_6' = C_6'(c, p,n,H, \lambda, B_R, B_{2R}, \norm{u}_{W^{1,p}(B_{2R})}, \norm{f}_{L^ {\tilde s}(B_{2R})}, \norm{f}_{{\mathcal M}^{2, \lambda}(B_{2R})}),
	\end{split}
	\end{equation*}
	which implies that $u_\e$ is uniformly bounded in $H^2_{loc}( \Omega \setminus Z)$ and then, up to a subsequence, $u_\e\to u$ weakly in $H^2_{loc}( \Omega \setminus Z)$.
	The latter and \eqref{miservepoi} yield
	\[
	\phi_\e^{i,j}=\left(\e^2+\abs{\nabla u_\e}^2\right)^{\frac{p-2}{2}}\partial_{ij}u_\e\to \abs{\nabla u}^{p-2}\partial_{i,j} u,
	\]
	weakly in $L^2_{loc}( \Omega \setminus Z)$, which proves \eqref{identita-limite} and concludes the proof.
\end{proof}

\begin{proof}[Proof of Proposition \ref{prop:1}]
%Since $H^{p-1}(\nabla u)= H_0(a(\nabla u))$ and $H_0$ is Lipschitz, 
From Theorem \ref{thm_main1} we know that 
\[
\vert a(\nabla u) \vert \in H^1_{loc}(\Omega).
\]
Thanks to a well-known result due to Stampacchia \cite{stampacchia} we infer that
\[
\frac{\vert a(\nabla u) \vert}{\e+\vert a(\nabla u) \vert} \in H^1_{loc}(\Omega)
\]
for any $\e>0$. Therefore, for any $\vphi \in C^{\infty}_c(\Omega)$, we can use 
$$
\frac{\vert a(\nabla u) \vert}{\e+\vert a(\nabla u) \vert}\vphi
$$ 
as a test function in \eqref{def-eqn-studiata-forma-integrale} and we have
\begin{equation}\label{a:1}
%\begin{split}
\int_\Omega \frac{\vert a(\nabla u) \vert}{\e+\vert a(\nabla u) \vert}\vphi\,f\,dx
%=\int_\Omega &H^{p-1}(\nabla u)\nabla_\xi H(\nabla u)\cdot\nabla\left( \frac{H^{p-1}(\nabla u)}{\e+H^{p-1}(\nabla u)}\vphi\right)\,dx
=\int_\Omega \frac{\vert a(\nabla u) \vert}{\e+\vert a(\nabla u) \vert} a(\nabla u)\cdot\nabla\vphi\,dx
 +\e \int_\Omega \, \frac{a(\nabla u ) \cdot \nabla (\vert a(\nabla u) \vert)}{(\e+ \vert a(\nabla u) \vert)^2}  \vphi \,dx.
%\end{split}
\end{equation}
We first notice that 
\begin{equation} \label{final}
\int_\Omega \frac{\vert a(\nabla u) \vert}{\e+\vert a(\nabla u) \vert}\vphi\,f\,dx = \int_{\Omega \setminus \{\nabla u=0\}} \frac{\vert a(\nabla u) \vert}{\e+\vert a(\nabla u) \vert}\vphi\,f\, dx.
\end{equation}
Moreover we have
$$
\Bigg{|}\e \frac{a(\nabla u ) \cdot \nabla (\vert a(\nabla u) \vert)}{(\e+ \vert a(\nabla u) \vert)^2} \vphi \Bigg{|} \leq   \nabla (\vert a(\nabla u) \vert) \vert \vphi \vert
%\Bigg{|}\e \frac{a(\nabla u ) \cdot \nabla (H^{p-1}(\nabla u))}{(\e+H^{p-1}(\nabla u))^2} \Bigg{|} \leq   H(\nabla H^{p-1}(\nabla u)) \,,
$$
where the latter function belongs to $L^1(\Omega)$, independently on $\e$. This implies that we can use the dominated convergence theorem in \eqref{a:1} as $\e\to 0^+ $ and, from \eqref{final}, we obtain
\begin{equation*}
\int_{\Omega\setminus\{\nabla u=0\}} \vphi\,f\,dx=\int_{\Omega} a(\nabla u) \cdot\nabla\vphi\,dx=\int_\Omega\vphi\,f\,dx \,,
\end{equation*}
where in the last equality we used again the equation. Since $\vphi$ is any function in $C^\infty_c(\Omega)$, we get the desired conclusion.
\end{proof}

\begin{proof}[Proof of Corollary \ref{corollary_final}]
This corollary is a straightforward consequence of Proposition \ref{prop:1}. Indeed, the singular set $\{\nabla u = 0\}$ is contained into the set $\{f=0\}$ up to a set of measure zero. Since $|\{f=0\}|=0$ then $|\{\nabla u = 0\}|$. 
\end{proof}

\end{document}